\newcommand{\op}{\mathrm{op}}
\renewcommand{\hat}{\widehat}
\newcommand{\pri}{\mathrm{PrIdl}}
\newcommand{\prf}{\mathrm{PrFil}}
\theoremstyle{plain}
\newtheorem{thm}{Theorem}
\newtheorem{lem}[thm]{Lemma}
\newtheorem{cor}[thm]{Corollary}
\newtheorem{prop}[thm]{Proposition}
\theoremstyle{definition}
\newtheorem{dfn}[thm]{Definition}
\newtheorem{exa}[thm]{Example}
\newtheorem{rem}[thm]{Remark}
\numberwithin{thm}{section}
\let\oldmarginpar\marginpar
\renewcommand\marginpar[1]{\-\oldmarginpar[\raggedleft\footnotesize #1]{\raggedright\footnotesize #1}}
\newcounter{mnotes}
\newcommand{\dom}{\mathrm{dom}}
\newcommand{\upset}{\mathord{\uparrow}}
\newcommand{\downset}{\mathord{\downarrow}}
\newcommand{\m}{\ensuremath{\ominus}}
\title{Priestley duality for MV-algebras and beyond}
\author{Wesley Fussner, Mai Gehrke, Sam van Gool, and Vincenzo Marra}
\begin{document}
\maketitle

\begin{abstract}
We provide a new perspective on extended Priestley duality for a large class of distributive lattices equipped with binary double quasioperators. Under this approach, non-lattice binary operations are each presented as a pair of partial binary operations on dual spaces. In this enriched environment, equational conditions on the algebraic side of the duality may more often be rendered as first-order conditions on dual spaces. In particular, we specialize our general results to the variety of MV-algebras, obtaining a duality for these in which the equations axiomatizing MV-algebras are dualized as first-order conditions.
\end{abstract}

\section{Introduction}

MV-algebras have been the subject of intense study principally for two reasons: First, because they provide the equivalent algebraic semantics for \L{}ukasiewicz many-valued propositional logic \cite{BP1989}, and second, because of their deep connection to lattice-ordered abelian groups via the Mundici functor \cite{Mun1986}. Despite sharp interest in a duality-theoretic analysis from both of these directions, MV-algebras have been notoriously resistant to study from the perspective of Priestley duality. Although extensions of Priestley duality (see, e.g., \cite{Gol1989}) provide the necessary tools to dualize expansions of bounded distributive lattices by additional operations (such as the monoid operation of MV-algebras), the axioms defining MV-algebras are not easily dualized. Indeed, the characteristic identity
\begin{align}\label{mvlaw}\tag{MV6}
 \neg(\neg x \oplus y)\oplus y = \neg(\neg y \oplus x)\oplus x
\end{align}
of MV-algebras is not canonical \cite{GePr02}. This creates a significant obstacle in rendering (MV6) in terms of an equivalent first-order condition on extended dual spaces, substantially hindering a transparent characterization of appropriate duals (see, e.g., \cite{CC2006}).

One approach, which offers at least a theoretical advantage, is extended Priestley duality for so-called \emph{double quasioperator algebras} \cite{GePr07a,GePr07b}. The latter comprise a huge class of lattice-ordered algebraic structures, including MV-algebras and, more generally, semilinear residuated binars (see, e.g., \cite{FJ2019}). For these, first-order dual conditions are guaranteed under the condition that we double the non-lattice operations of arity two or higher. Dually, this requires, a priori, two relations per additional operation for the duality of \cite{Gol1989}, or one derived one, as shown in \cite{GePr07b}. From the work in \cite{GePr07a}, it is clear that an alternative approach for double quasioperator algebras is to witness them dually by \emph{two partial operations} each. This is what we will do here.

Although much of our inquiry applies more generally, we focus on a class of double quasioperator algebras that contains, \emph{inter alia}, MV-algebras when presented in the signature including the bounded lattice operations and $\m$ (the co-residual of the monoid operation $\oplus$). We call such algebraic structures $\m$-algebras, and provide a duality for them by enriching the Priestley duals of their bounded distributive lattice reducts by a pair of partial binary operations, which together dualize the operation $\m$. This pair of partial binary operations reflects the two natural extensions of the operation $\m$ to the canonical extension of its corresponding bounded distributive lattice, and thus our work is anchored throughout in the theory of canonical extensions.

The introduction of both of the personalities of $\m$ on dual spaces affords a more expressive environment. As an application illustrating this expressive power, we specialize our duality for $\m$-algebras to MV-algebras. In this setting, the law (MV6) may be rendered as a transparent first-order condition on the two partial binary operations (see Section \ref{subsec:mv6}). We expect the ideas laid out in this paper to support many similar applications, and note that other applications of this duality theory are already under way (see, e.g., \cite{GJR2019}).

The paper is structured as follows. Section \ref{sec:preliminaries} presents necessary background information regarding canonical extensions, residuated algebraic structures, and Priestley duality. Following these preliminary remarks, Section \ref{sec:Priestley duality} develops our Priestley duality for $\m$-algebras. Section \ref{sec:mv} then specializes the duality for $\ominus$-algebras to MV-algebras, and specifically provides an analysis of the role of the defining condition (MV6). We finish our discussion in Section \ref{sec:examples}, which offers some illustrative examples of the theory developed in earlier sections.

\section{Preliminaries}\label{sec:preliminaries}

Duality theory for distributive lattices descends from a key insight of Birkhoff \cite{Bir1937}: Each finite distributive lattice is determined up to isomorphism by its poset of join-irreducible elements. Putting aside certain exceptional cases, this fails badly for infinite distributive lattices; in fact, infinite distributive lattices may have no join-irreducible elements at all. To salvage Birkhoff's insight in such cases, we must introduce enough idealized join-irreducible elements in order to recover the original lattice. In the present treatment, we use in parallel two intertwined approaches to accomplishing this: An explicitly algebraic perspective arising from canonical extensions of bounded distributive lattices, and an order-topological perspective anchored in Priestley duality. Here we summarize pertinent facts about these two approaches and their relationship. For general background, see \cite{GeJo2004}. Note also that \cite{GGM2014} recalls pertinent facts regarding the canonical extensions of MV-algebras. More background regarding MV-algebras themselves may be found in Section \ref{sec:mv}.

\subsection{Canonical extensions}
In a complete lattice $C$, $a\in C$ is said to be \emph{compact} if for any $S\subseteq C$ with  $a\leq\bigvee S$ there exists some finite $S'\subseteq S$ with $a\leq\bigvee S'$. A lattice $C$ is said to be \emph{algebraic} if $C$ is complete and for any $a\in C$ there exists a set $S$ of compact elements of $C$ with $a=\bigvee S$. A lattice $C$ is said to be \emph{dually algebraic} if its opposite lattice $C^\mathsf{op}$ is algebraic, and \emph{doubly algebraic} if it is both algebraic and dually algebraic. Given a complete distributive lattice $C$, an element $a\in C$ is said to be \emph{completely join-irreducible} if $a\in S$ for every $S\subseteq C$ with $a=\bigvee S$, and dually $a$ is said to be \emph{completely meet-irreducible} if $a\in S$ for every $S\subseteq C$ with $a=\bigwedge S$. We denote the collections of completely join-irreducible and completely meet-irreducible elements of $C$ by $J^\infty(C)$ and $M^\infty(C)$, respectively. For a doubly algebraic distributive lattice $C$, there are mutually-inverse poset isomorphisms $\kappa\colon J^\infty(C)\to M^\infty(C)$ and \mbox{$\kappa^{-1}\colon M^\infty(C)\to J^\infty(C)$} given, for $x\in J^\infty(C)$ and $y\in M^\infty(C)$, by
$$\kappa (x) = \bigvee \{a\in A : x\nleq a\}$$
and
$$\kappa^{-1}(y) = \bigwedge \{b\in A : b\nleq y\},$$
respectively. Generalizing the finite case, every doubly algebraic distributive lattice $C$ is isomorphic to the lattice of down-sets of the poset $J^\infty(C)$ and, equivalently, of the poset $M^\infty(C)$.

If $A$ is a sublattice of a doubly algebraic lattice $C$, we may consider the closure of $A$ in $C$ by arbitrary (not just finite) meets and joins. We say that $a\in C$ is a \emph{filter element} if $a$ is a (possibly infinitary) meet of elements of $A$ and an \emph{ideal element} if $a$ is a (possibly infinitary) join of elements of $A$. The sets of filter elements of $C$ and ideal elements of $C$ are denoted by $F(C)$ and $I(C)$, respectively. Of course, these notions depend on the choice of the sublattice $A$ as well as the lattice $C$, but for our purposes the choice of $A$ will be obvious from context and we do not explicitly refer to it in our notation. Note that filter and ideal elements were respectively called \emph{closed} and \emph{open} elements in \cite{GeJo2004}. Following this topological analogy, we define the \emph{interior map} $\mathrm{int}\colon C\to I(C)$ by
$$\mathrm{int}(a) = \bigvee\{x\in I(C) \mid x\leq a\}.$$

If $A$ is a sublattice of a doubly algebraic lattice $C$, then we say that $A$ is \emph{separating} in $C$ if $J^\infty(C)\subseteq F(C)$. Note that this is equivalent to $M^\infty(C)\subseteq I(C)$, and also to the demand that for all $x,y\in C$ with $x\not\leq y$ there exists $a\in A$ such that $x\not\leq a$ and $y\leq a$. In the same setting, we say that $A$ is \emph{compact} in $C$ if whenever $S,T\subseteq A$ with $\bigwedge S\leq \bigvee T$ in $C$, there exist finite $S'\subseteq S$ and $T'\subseteq T$ such that $\bigwedge S'\leq \bigvee T'$.

A \emph{canonical extension} of a bounded distributive lattice $A$ is a doubly algebraic lattice $A^\delta$ that contains $A$ as a separating, compact (bounded) sublattice. Every bounded distributive lattice has a canonical extension, and this is unique up to an isomorphism fixing $A$. In light of this, we refer to \emph{the} canonical extension of a bounded distributive lattice $A$, and denote it by $A^\delta$.

The canonical extension of an arbitrary bounded distributive lattice $A$ is a completion of $A$, and therefore provides an embedding of $A$ into the doubly algebraic lattice $A^\delta$. Moreover, it may be shown that $A$ is \emph{dense} in $A^\delta$ in the sense that each element of $A^\delta$ is at once a join of meets of elements of $A$ and a meet of joins of elements of $A$. If $A$ and $B$ are bounded distributive lattices and $f\colon A\to B$ is any map, density provides two obvious candidates for extending $f$ to a map $A^\delta\to B^\delta$, namely the \emph{$\sigma$-} and \emph{$\pi$-extensions} given respectively by
$$f^\sigma(x) = \bigvee\left\{\bigwedge f([p,u]\cap A) \mid p\in F(A^\delta), u\in I(A^\delta),\text{ and }p\leq x\leq u \right\}$$
and
$$f^\pi(x) = \bigwedge\left\{\bigvee f([p,u]\cap A) \mid p\in F(A^\delta), u\in I(A^\delta),\text{ and } p\leq x\leq u \right\},$$
where $[p,u] = \{y\in A^\delta \mid p\leq y\leq u\}$. Note that in general $f^\sigma$ and $f^\pi$ do not coincide, but $f^\sigma\leq f^\pi$ always holds. For this reason, the $\sigma$- and $\pi$-extensions are also called the \emph{lower} and \emph{upper} extensions. Note that $f^\sigma(x) = f^\pi(x)$ whenever $x$ is a filter or ideal element. In the event that $f^\sigma(x)=f^\pi(x)$ for all $x\in A^\delta$, $f$ is called \emph{smooth}. When $f$ is smooth, we will write the common value of $f^\sigma$ and $f^\pi$ as $f^\delta$. A basic fact regarding canonical extensions is that if $f$ preserves join or meet, or reverses at least one of the two, then $f$ is smooth.

We note that for any lattices $A$ and $B$, we have $(A^\delta)^\mathsf{op}\cong (A^\mathsf{op})^\delta$ and $(A\times B)^\delta\cong A^\delta\times B^\delta$. This fact permits us to extend the definitions of the $\sigma$- and $\pi$-extensions (given above for unary maps) to those of arbitrary finite arity in the obvious way. In particular, the definitions of the $\sigma$- and $\pi$-extensions may be extended to arbitrary finitary algebraic operations expanding bounded distributive lattices. We freely apply this observation in the case of $\ominus$-algebras in the work to follow.

In the sequel, we will occasionally have need of the following canonicity result (drawn from \cite[Thm. 4.6]{GeJo1994} as well as its dual statement).

\begin{prop}\label{prop:canonicity}
Let $A$ be a distributive lattice and $f_i\colon A\to A$, $i\in I$.
\begin{enumerate}
\item If each $f_i$ preserves finite joins, then every identity that holds in $(A, \{f_i\}_{i\in I})$ also holds in $(A^\delta, \{f^\sigma_i\}_{i\in I})$.
\item If each $f_i$ preserves finite meets, then every identity that holds in $(A, \{f_i\}_{i\in I})$ also holds in $(A^\delta, \{f^\pi_i\}_{i\in I})$.
\end{enumerate}
\end{prop}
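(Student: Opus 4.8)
The plan is to establish part (1) and then deduce part (2) by order-duality. Indeed, passing to the opposite lattice exchanges $\bigvee$ with $\bigwedge$ and hence $\sigma$- with $\pi$-extensions (via $(A^\delta)^{\mathsf{op}} \cong (A^{\mathsf{op}})^\delta$), while turning a finite-meet-preserving map into a finite-join-preserving one; since an identity holds in a structure if and only if its order-dual holds in the opposite structure, applying (1) to $(A^{\mathsf{op}}, \{f_i\})$ immediately yields (2). So assume each $f_i$ preserves finite joins. An identity $s \approx t$ over the signature $\{\wedge, \vee, 0, 1\} \cup \{f_i\}_{i \in I}$ holds in $(A, \{f_i\})$ precisely when the induced term functions coincide, $s^A = t^A$ as maps $A^n \to A$. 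It therefore suffices to prove the term-preservation statement $w^{A^\delta} = (w^A)^\sigma$ for every term $w$, where $w^{A^\delta}$ is computed in $(A^\delta, \{f_i^\sigma\})$ and $(w^A)^\sigma$ is the $\sigma$-extension of the term function $w^A \colon A^n \to A$ (using $(A^n)^\delta \cong (A^\delta)^n$); granting this, $s^{A^\delta} = (s^A)^\sigma = (t^A)^\sigma = t^{A^\delta}$.

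I would prove $w^{A^\delta} = (w^A)^\sigma$ by induction on the structure of $w$, carrying the auxiliary invariant that the extended term function maps tuples of filter elements to filter elements. The base cases, variables (projections) and the constants $0,1$, are immediate, these being lattice homomorphisms whose $\sigma$-extensions are the evident maps and which clearly preserve filter elements. For the inductive step $w = g(w_1,\dots,w_k)$, the decisive observation is that each basic operation, once extended, sends filter elements to filter elements. For $g = f_i$ this is exactly where finite-join preservation is used: for a filter element $p$ one has $f_i^\sigma(p) = \bigwedge\{f_i(a) : a \in A,\ a \geq p\}$, a (down-directed) meet of elements of $A$, hence again a filter element. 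For $g \in \{\wedge,\vee\}$ it holds because $F(A^\delta)$ is closed under finite meets and finite joins; note also that $\wedge$ and $\vee$ are smooth, with $\wedge^\sigma$ and $\vee^\sigma$ the complete binary meet and join of $A^\delta$. Together with the invariant for $w_1,\dots,w_k$, this ensures that the inner map $(w_1,\dots,w_k)$ sends tuples of filter elements to filter elements of the product (using $(A \times A)^\delta \cong A^\delta \times A^\delta$ to identify the latter), which both maintains the invariant for $w$ and supplies the hypothesis of the composition lemma below.

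The engine of the inductive step is a composition lemma for $\sigma$-extensions: if $h$ is monotone and $h^\sigma$ maps filter elements to filter elements, and $g$ is monotone, then $(g \circ h)^\sigma = g^\sigma \circ h^\sigma$. Applying this with $h = (w_1,\dots,w_k)$ and $g$ the basic operation, and then invoking the inductive hypotheses $w_j^{A^\delta} = (w_j^A)^\sigma$ together with $\wedge^\sigma = \wedge$ and $\vee^\sigma = \vee$, gives $(w^A)^\sigma = g^\sigma \circ (w_1^{A^\delta},\dots,w_k^{A^\delta}) = w^{A^\delta}$, closing the induction.

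The main obstacle is the composition lemma itself: in general $(g \circ h)^\sigma \neq g^\sigma \circ h^\sigma$, and the filter-preservation hypothesis on the inner map is exactly what forces equality. Its proof relies on the dense structure of $A^\delta$: filter elements are join-dense (as $J^\infty(A^\delta) \subseteq F(A^\delta)$), every $\sigma$-extension satisfies $\varphi^\sigma(x) = \bigvee\{\varphi^\sigma(p) : p \in F(A^\delta),\ p \leq x\}$, and on filter elements $\varphi^\sigma(p)$ is the down-directed meet $\bigwedge\{\varphi(a) : a \in A,\ a \geq p\}$. One must check that when $h^\sigma(p)$ is again a filter element the family $\{h(a) : a \geq p\}$ is meet-cofinal above it in a way respected by $g$, so that the two computations of $(g \circ h)^\sigma(p)$ agree; extending from filter elements to all of $A^\delta$ by join-density then finishes the lemma. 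This is the genuinely technical point and is precisely the content drawn from \cite[Thm. 4.6]{GeJo1994}; the remaining verifications are routine.
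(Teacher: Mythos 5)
The paper offers no proof of this proposition --- it is imported directly from \cite[Thm.~4.6]{GeJo1994} together with its order-dual --- so your sketch can only be measured against the standard argument there, which is indeed what you are reconstructing: deduce (2) from (1) by order-duality, reduce (1) to the term-preservation statement $w^{A^\delta} = (w^A)^\sigma$, and close the induction on terms with a composition lemma for $\sigma$-extensions. That architecture is correct. The gap lies in the composition lemma itself and, symptomatically, in where you claim the hypothesis that $f_i$ preserves finite joins is used. You say that filter-preservation of $f_i^\sigma$ ``is exactly where finite-join preservation is used,'' but the identity $f^\sigma(p) = \bigwedge\{f(a) : a \in A,\ a \geq p\}$ for filter elements $p$ holds for \emph{every} order-preserving $f$ (it follows from isotonicity and compactness alone), so $f^\sigma$ sends filter elements to filter elements with no operator hypothesis whatsoever. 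Consequently your composition lemma --- outer map merely monotone, inner map filter-preserving --- would, if true, yield the proposition for arbitrary isotone expansions, and your induction never invokes distributivity of $A$ either. Both are warning signs: the lemma as stated asserts, in effect, that $\sigma$-extension commutes with composition of arbitrary monotone maps, and the failure of exactly this is the source of all non-canonicity phenomena (including the non-canonicity of (MV6) recalled in the introduction of this paper). The step that breaks in your own sketch is ``extending from filter elements to all of $A^\delta$ by join-density'': one always has $(g \circ h)^\sigma(x) = \bigvee\{(g\circ h)^\sigma(p) : p \leq x,\ p \in F(A^\delta)\}$, whereas $g^\sigma(h^\sigma(x)) = g^\sigma\bigl(\bigvee\{h^\sigma(p) : p \leq x\}\bigr)$, and for a merely monotone $g$ the map $g^\sigma$ need not preserve this up-directed join of filter elements; this gives $(g\circ h)^\sigma \leq g^\sigma \circ h^\sigma$ but not the converse.

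The repair uses the join-preservation hypothesis precisely at that point. Because each $f_i$ preserves finite joins, and because $\wedge$ and $\vee$ preserve finite joins in each coordinate \emph{since $A$ is distributive}, every basic operation is an operator; hence every term function $w^A$ is an operator, and the $\sigma$-extension of an operator is a \emph{complete} operator, preserving arbitrary joins in each coordinate. Two complete operators on $(A^\delta)^n$ that agree on tuples of filter elements agree everywhere, since every element of $A^\delta$ is a join of filter elements. The agreement on tuples of filter elements is exactly your compactness computation, which is fine. So the correct engine is the composition lemma for operators (outer map an operator, inner maps with filter-preserving extensions), and with that substitution your induction closes and the proof is complete.
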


\subsection{Residuation, co-residuation, and double quasioperators}
In order to express the theory of adjoints and (co-)residuated structures in sufficient generality, we must introduce some technical notation. This material is necessarily burdened by some heavy bookkeeping, so in order to provide intuition we explain its conceptual origins after we have introduced the relevant definitions and results.

If $A$ and $B$ are posets and $f\colon A\to B$, then a map \mbox{$f^\sharp\colon B\to A$} is called an \emph{upper adjoint} of $f$ if for all $x\in A$, $y\in B$,
$$f(x)\leq y \iff x\leq f^\sharp(y).$$
Dually, a map $f^\flat\colon B\to A$ is called a \emph{lower adjoint} of $f$ if for all $x\in B$, $y\in A$,
$$x\leq f(y) \iff f^\flat(x)\leq y.$$
If $f\colon A\times B\to C$ is a binary map, then for each $a\in A$, $b\in B$, we define left and right translation maps $L_{f,a}\colon B\to C$ and $R_{f,b}\colon A\to C$ by $L_{f,a}(x)=f(a,x)$ and $R_{f,b}(x)=f(x,b)$, denoting these respectively by $L_a$ and $R_b$ if $f$ is understood from context. In the event that $L_{f,a}$ has an upper adjoint (respectively, lower adjoint) for each $a\in A$, by a slight abuse of notation we define a map $L_f^\sharp\colon A\times C\to B$ by $L_f^\sharp(x,y) = L_{f,x}^\sharp(y)$ (and, respectively, $L_f^\flat\colon A\times C\to B$ by $L_f^\flat(x,y) = L_{f,x}^\flat(y)$). We call the map $L_f^\sharp$ (respectively, $L_f^\flat$) the \emph{left (co-)residual} of $f$. The \emph{right (co-)residual} $R_f^\sharp\colon C\times B\to A$ (respectively $R_f^\flat\colon C\times B\to A$) is analogously defined by $R_f^\sharp(x,y)=R_{f,y}^\sharp(x)$ (respectively $R_f^\flat(x,y)=R_{f,y}^\flat(x)$), provided that each of the maps $R_{f,y}^\sharp$ (respectively, $R_{f,y}^\flat$) exist. When $f$ is clear from context, we denote the left and right (co-)residuals by $L^\sharp$ and $R^\sharp$ (respectively, $L^\flat$ and $R^\flat$). If $f$ is such that both $L^\sharp$ and $R^\sharp$ (respectively, $L^\flat$ and $R^\flat$) exist, then we say that $f$ is \emph{(co-)residuated}.

Note that if $A$ and $B$ are complete lattices and $f\colon A\to B$ preserves arbitrary joins (meets), then $f$ has an upper adjoint (respectively, lower adjoint). For bounded distributive lattices $A$ and $B$, if $f\colon A\to B$ preserves finitary meets (or, respectively, joins), $f^\delta$ preserves arbitrary meets (respectively, joins). Similar comments apply when $f$ converts finitary meets to joins (or vice versa). This entails that if $f$ preserves binary joins (meets), then $f^\delta$ has an upper adjoint (lower adjoint). In the event that $f\colon A\times B\to C$ is a binary map that preserves arbitrary joins (meets) in each coordinate, the extension $f^\sigma$ (respectively $f^\pi$) is \mbox{(co-)residuated}. Indeed, if $f$ is \mbox{(co-)residuated} to begin with, then the left and right \mbox{(co-)residuals} of $f^\sigma$ (respectively $f^\pi$) are exactly $(L^\sharp)^\pi$ and $(R^\sharp)^\pi$ (respectively $(L^\flat)^\pi$ and $(R^\flat)^\pi$). Because adjoints are ubiquitous in canonical extensions, we will typically drop parentheses appearing due to successive applications of $\sigma$, $\pi$, $\delta$, $\sharp$, and $\flat$ in order to ease our notational burden (writing, e.g., $(f^\delta)^\sharp$ as $f^{\delta\sharp}$).

Given a bounded distributive lattice $A$, we denote by $A^1$ the lattice $A$ itself and by $A^\mathsf{op}$ its opposite lattice. By an \emph{order type}\footnote{In \cite{GePr07a} and \cite{GePr07b}, the symbol $\partial$ was used for what is denoted $\mathsf{op}$ in the present paper.} we mean an element of $\{1,\mathsf{op}\}^n$ for some positive integer $n$, and by a \emph{binary order type} we mean an order type for which $n=2$. For a binary order type $\varepsilon=(\varepsilon_1,\varepsilon_2)$, a \emph{double quasioperator of type $\varepsilon$} is a map $f\colon A^{\varepsilon_1}\times A^{\varepsilon_2}\to A$ that preserves both meet and join in each coordinate (for more information, see \cite{GePr07a} and \cite{GePr07b}, which provide a study of canonical extensions of double quasioperators of arbitrary, not-necessarily-binary order type. See also \cite{FJ2019} for an algebraic study of equational conditions defining residuated double quasioperators.)

Although it applies much more generally, the content of this study is motivated by the following situation. Suppose that $A$ is a bounded distributive lattice, that $\oplus$ is a co-residuated double quasioperator of type $(1,1)$ on $A$, and that the right co-residual $\m$ of $\oplus$ is a double quasioperator of type $(1,\mathsf{op})$.\footnote{Note that if $f\colon A\times A\to B$ is a (co-)residuated map satisfying $f(x,y)=f(y,x)$ for all $x,y\in A$, one may readily show that $L^\sharp(x,y)=R^\sharp(y,x)$ (respectively, $L^\flat(x,y)=R^\flat(y,x)$). Because the left and right (co-)residuals coincide in this setting, it is common practice to take only one of them as primitive.} Then $\oplus$ is the right residual of $\m$. Both $\oplus$ and $\m$ are typically written in infix notation, with e.g. the co-residuation property being expressed by the stipulation that
$$x\leq y\oplus z \iff x\m z\leq y$$
for all $x,y,z\in A$. In this setting, $\m^\sigma$ has a right residual on $A^\delta$ given by $\oplus^\pi$ (see, e.g., \cite[Prop.~5.3]{GGM2014}). Said differently, for any $x,y\in A^\delta$,
$$x\oplus^\pi y = R_{\m^\sigma}^\sharp (x,y).$$
Accordingly, the map $R_{\m^\sigma}^\sharp$ is a manifestation of the (extension of the) operation $\oplus$ on $A^\delta$. In the setting where $\oplus$ is absent from $A$ to begin with (as in the $\m$-algebras considered in the sequel), an analogue of its extension is nevertheless present in the guise of $R_{\m^\sigma}^\sharp$. Due to the importance of the latter operation, we abbreviate it by the more evocative $\m^{\sigma\sharp}$. We also adopt infix notation, writing
$$x\m^{\sigma\sharp}y = R_{\m^\sigma}^\sharp(x,y).$$

\subsection{Priestley duality and its connection to the canonical extension}

The following well-known result is often called Esakia's Lemma in the more restrictive setting of a modal operator (see, e.g., \cite{Ge2014} for a treatment in the language of canonical extensions). To express this lemma concisely, we call a subset $S$ of a poset \emph{up-directed} if every pair of elements of $S$ have a common upper bound in $S$, and \emph{down-directed} if every pair of elements of $S$ have a common lower bound in $S$. A poset $X$ is called a \emph{directed-complete partial order} (or \emph{dcpo}) if every up-directed subset of $X$ has a supremum. Likewise, $X$ is called a \emph{dual dcpo} if every down-directed subset of $X$ has an infimum.
\begin{lem}\label{lem:Esakia}
Let $A$ and $B$ be bounded distributive lattices and let $f\colon A\to B$ be order-preserving.
\begin{enumerate}
\item If $S$ is an up-directed subset of $I(A^\delta)$, then $f^\delta\left (\bigvee S\right ) = \bigvee \{f^\delta(x) \mid x\in S\}.$
\item If $S$ is a down-directed subset of $F(A^\delta)$, then $f^\delta\left (\bigwedge S\right ) = \bigwedge\{f^\delta(x) \mid x\in S\}.$
\end{enumerate}
\end{lem}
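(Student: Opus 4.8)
The plan is to establish part (1) and obtain part (2) by applying it to the order-preserving map $f^{\mathsf{op}}\colon A^{\mathsf{op}}\to B^{\mathsf{op}}$, using $(A^\delta)^{\mathsf{op}}\cong(A^{\mathsf{op}})^\delta$ (under which filter elements of $A^\delta$ become ideal elements, meets become joins, and down-directed sets become up-directed ones, while the $\sigma$-extension of $f^{\mathsf{op}}$ corresponds to the $\pi$-extension of $f$). Throughout I work with $f^\sigma$: every element occurring below, namely $\bigvee S$ and each member of $S$, is an ideal element, and since $f^\sigma$ and $f^\pi$ agree on ideal elements, writing $f^\delta$ for their common value is unambiguous.

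Two preliminary facts carry the argument. First, an ideal element is by definition a join of elements of $A$, so $I(A^\delta)$ is closed under arbitrary joins and $u:=\bigvee S$ is itself an ideal element. Second, I would prove the formula
$$f^\sigma(u)=\bigvee\{f(a)\mid a\in A,\ a\leq u\}\qquad\text{for every }u\in I(A^\delta),$$
directly from the definition of $f^\sigma$. Because $u$ is ideal one may restrict the defining join to the upper endpoint $u$ (for fixed $p$, replacing $u'$ by a smaller admissible upper endpoint shrinks $[p,u']$ and hence can only increase $\bigwedge f([p,u']\cap A)$, so the term at $u'=u$ dominates), giving $f^\sigma(u)=\bigvee\{\bigwedge f([p,u]\cap A)\mid p\in F(A^\delta),\ p\leq u\}$. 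Compactness of $A$ in $A^\delta$ ensures each $[p,u]\cap A$ is nonempty, so selecting any $a_0$ there yields $\bigwedge f([p,u]\cap A)\leq f(a_0)\leq\bigvee\{f(a)\mid a\leq u\}$, which gives $\leq$; conversely, taking $p=a$ for $a\in A$ with $a\leq u$ makes $a$ the least element of $[a,u]\cap A$, whence monotonicity gives $\bigwedge f([a,u]\cap A)=f(a)$ and the reverse inequality. This derivation uses only the definition, compactness, and monotonicity, so it is not circular.

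With the formula, the inequality $\geq$ is immediate: the formula renders $f^\sigma$ monotone on ideal elements, and each $x\in S$ satisfies $x\leq u$, so $\bigvee\{f^\sigma(x)\mid x\in S\}\leq f^\sigma(u)$. For the substantive inequality $\leq$ it suffices, by the formula, to show $f(a)\leq\bigvee\{f^\sigma(x)\mid x\in S\}$ for each $a\in A$ with $a\leq u$. Writing each $x\in S$ as $\bigvee\{b\in A\mid b\leq x\}$ exhibits $u$ as a join of elements of $A$ each lying below some member of $S$, so compactness yields finitely many $b_1,\dots,b_n\in A$ with $a\leq b_1\vee\cdots\vee b_n$ and $b_i\leq x_i$ for some $x_i\in S$. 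Up-directedness now provides a single $x\in S$ above $x_1,\dots,x_n$, so $a\leq x$; since $f(a)=f^\sigma(a)$ for $a\in A$ and $f^\sigma$ is monotone on ideal elements, $f(a)=f^\sigma(a)\leq f^\sigma(x)\leq\bigvee\{f^\sigma(x)\mid x\in S\}$, as required.

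I expect the main obstacle to be precisely this last interplay of compactness and up-directedness: compactness controls only finite joins, so without directedness one could not collapse the finitely many witnesses $x_1,\dots,x_n$ into a single $x\in S$, and the desired inequality would break down. A subsidiary point demanding care is the non-circular derivation of the formula for $f^\sigma$ on ideal elements, together with the bookkeeping that keeps every element of the argument ideal, so that $f^\sigma$, $f^\pi$, and $f^\delta$ may be used interchangeably.
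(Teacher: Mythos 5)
The paper does not actually prove this lemma; it is stated as a known result (Esakia's Lemma) with a citation to the literature, so there is no in-paper argument to compare against. Your proof is correct and is essentially the standard one: you reduce $f^\sigma$ on ideal elements to the formula $f^\sigma(u)=\bigvee\{f(a)\mid a\in A,\ a\leq u\}$ (derived non-circularly from the interval definition using compactness and monotonicity), and then combine compactness with up-directedness to absorb the finitely many witnesses $b_1,\dots,b_n$ into a single member of $S$; the reduction of part (2) to part (1) via $(A^{\op})^\delta\cong(A^\delta)^{\op}$, together with the observation that every element in sight is an ideal (resp.\ filter) element so that $f^\sigma$, $f^\pi$, and $f^\delta$ coincide there, is also handled correctly. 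The one caveat is that your final step (and indeed the statement itself, e.g.\ for a constant map $f$) requires $S\neq\emptyset$, which the paper's literal definition of up-directedness does not guarantee; this is the usual convention that directed sets are nonempty, and the paper only ever applies the lemma to sets it has separately checked to be nonempty, so it is worth a remark but is not a defect of your argument.
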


If $A$ is a distributive lattice, then for each $a\in A$ we may define
$$\hat{a} = \{x\in M^\infty(A^\delta) \mid a\not\leq x\}.$$
The collection $\{\hat{a} \mid a\in A\}\cup\{(\hat{a})^\mathsf{c} \mid a\in A\}$ gives a subbase for a topology $\tau$ on $M^\infty(A^\delta)$, and equipped with this topology $X=M^\infty(A^\delta)$ forms a compact ordered topological space that is \emph{totally order-disconnected}, i.e., if $x,y\in X$ with $x\not\leq y$ then there exists a clopen down-set $U\subseteq X$ such that $y\in U$ and $x\notin U$. Compact, totally order-disconnected ordered topological spaces are called \emph{Priestley spaces}. When endowed with continuous isotone maps for morphisms, Priestley spaces form a category that is dually equivalent to the category of bounded distributive lattices and bounded lattices homomorphisms \cite{Pr70,Pr72}. As depicted here, one functor of this equivalence associates to a bounded distributive lattice $A$ the Priestley space $(M^\infty(A^\delta),\leq,\tau)$, and associates to a homomorphism $f\colon A\to B$ of bounded distributive lattices the continuous isotone map $(f^\delta)^\sharp\restriction M^\infty(B^\delta)$.  It is noteworthy that the underlying order of a Priestley space is both a dcpo and a dual dcpo.

If $A$ is a bounded distributive lattice, a \emph{Priestley dual space} of $A$ is any Priestley space $X$ such that $A$ is isomorphic to the lattice of clopen down-sets of $X$. All Priestley dual spaces of $A$ are isomorphic. As usual when discussing particular representatives from an isomorphism class, we sometimes refer to a given Priestley dual space of $A$ as ``the'' Priestley dual space of $A$. However, there are many (isomorphic, but not identical) choices for this. For example, $(M^\infty(A^\delta),\leq,\tau)$ is a Priestley dual space of $A$. Another example may be found by taking the set $\pri(A)$ of prime ideals of $A$ endowed with the inclusion order $\subseteq$ and the topology $\sigma$ generated by all sets of the form
$$\{x\in\pri(A) \mid a\in x\}, \{x\in\pri(A) \mid a\notin x\},$$
where $a\in A$. The structure $(\pri(A), \subseteq, \sigma)$ is then a Priestley dual space of $A$. The map sending a prime ideal $I$ of $A$ to the completely meet-irreducible element $\bigvee I$ of $A^\delta$ is an isomorphism between $(\pri(A), \subseteq, \sigma)$ and $(M^\infty(A^\delta), \leq, \tau)$.

Of course, we could just as well work with $J^\infty(A^\delta)$ instead of $M^\infty(A^\delta)$ (by exploiting the isomorphism $\kappa$) or with the collection of prime filters $\prf(A)$ of $A$ (by exploiting the fact that $\prf(A)\cong\pri(A)^\mathsf{op}$ via $x\mapsto A\setminus x$). Suitably modifying the subbase for the topology in the obvious ways, we obtain Priestley dual spaces of $A$ in each instance.

In the presence of double quasioperators, one is naturally led to toggle between $J^\infty(A^\delta)$ and $M^\infty(A^\delta)$ (and between prime filters and prime ideals). To facilitate this, for any Priestley dual space $X$ of $A$ we define four Priestley space isomorphisms
$${I_{(-)}^X}\colon X\to \pri(A),$$
$${F_{(-)}^X}\colon X\to \prf(A),$$
$$\mu^X\colon X\to M^\infty(A^\delta),$$
$$\nu^X\colon X\to J^\infty(A^\delta),$$
obtained by composing the isomorphisms outlined above. Since the domain $X$ will always be clear from context, for ease of notation we will abbreviate $I_{(-)}^X$, $F_{(-)}^X$, $\mu^X$, and $\nu^X$ by $I_{(-)}$, $F_{(-)}$, $\mu$, and $\nu$, respectively. The isomorphisms are connected via
$$I_x = A\cap\downset\mu(x),$$
$$\mu(x) = \bigvee I_x,$$
$$F_x = A\cap\upset\nu(x),$$
$$\nu(x) = \bigwedge F_x,$$
$$\kappa(\nu(x)) = \mu(x),$$
$$F_x^\mathsf{c} = I_x.$$
In the sequel, we will often employ the above to switch between different Priestley dual spaces of a given distributive lattice $A$.\footnote{Readers who are more comfortable working with a particular, fixed representation of ``the'' Priestley dual space of $A$ may read this paper with that choice in mind. For example, one could, if one wished, define ``the'' Priestley dual space of $A$ to be $(\pri(A), \subseteq, \sigma)$ and read every mention of the dual of $A$ according to that definition. All results in the sequel remain true without modification. However, we emphasize that a ``neutral'' perspective that toggles between particular representations of the Priestley dual of $A$ is the most natural environment for this paper's content.}

Our treatment above began from the perspective of canonical extensions, but one may also start from the point of view of Priestley duality. From this perspective, we note that the canonical extension of $A$ may be realized as the collection of down-sets of the Priestley dual $X$ via  $a\mapsto\{x\in X \mid x\in \hat{a}\}$.

\section{Priestley duality for \m-algebras}\label{sec:Priestley duality}
We introduce \m-algebras and develop an extended Priestley duality for these based on Priestley spaces with additional partial operations.

\subsection{Definitions of \m-algebras and their extended dual spaces}

\begin{dfn}\label{dfn:malgebras}
A \m-algebra, $(A,\m)$, is a bounded distributive lattice $A$ equipped with a binary operation \m\ satisfying:
\begin{enumerate}
\item \m\ is a double quasioperator of type $(1,\mathsf{op})$. That is, for all $a,b,c\in A$,
\begin{align*}
(a\wedge b)\m c=(a\m c)\wedge(b\m c) &\ \text{ and }\ (a\vee b)\m c=(a\m c)\vee(b\m c)\\
 a\m(b\wedge c)=(a\m b)\vee(a\m c) &\ \text{ and }\ a\m(b\vee c)=(a\m b)\wedge(a\m c)
\end{align*}
\item \m\ is normal as an operator. That is, for all $a\in A$ we have $0\m a=0$ and $a\m 1=0$.
\item For all $a \in A$, $a \m 0 = a$.
\end{enumerate}
\end{dfn}

\begin{rem}
   We focus on double quasioperators of type $(1,\mathsf{op})$ because we will apply the duality developed in this section to the operator \m\ of an MV-algebra. We also impose the second and third conditions because these hold in the \m-reduct of MV-algebras and because it will somewhat simplify the partiality of the operations dual to \m. However, a generalization of the results proved in this section for \m-algebras may be obtained for general double quasioperator algebras.
\end{rem}

In seeking a functional dual of \m\ it will be more convenient to have the dual of the following derived operation available. Throughout the following, we denote the dual space of $A$ by $X$.

\begin{dfn}
Given a \m-algebra $(A,\m)$, the associated \emph{negation} is the operation
\[
\neg\colon A \to A, \ a\mapsto 1\m a.
\]
\end{dfn}

From the properties of \m\ it follows that $\neg \colon A^\op \to A$ is a lattice homomorphism, i.e., $\neg$ reverses both binary meet and join, sends $0$ to $1$ and $1$ to $0$.
Accordingly, the dual of $\neg$ is a continuous order-reversing map on $X$.

\begin{dfn}
We define the dual $i$ of $\neg$ to be the continuous order-reversing map on $X$ dual to the homomorphism $\neg\colon A^\op \to A$.
Since $\neg$ is unary and reverses binary meets (and joins), we have $\neg^\sigma=\neg^\pi$, and thus we denote the unique extension by $\neg^\delta \colon (A^\delta)^\op \to A^\delta$. We have
\[
\forall u,v\in A^\delta\ \left(\ \neg^\delta u\ \leq\ v\ \iff\ \neg^{\delta\sharp} v\ \leq\ u\ \right)
\]
and
\[
\forall u,v\in A^\delta\ \left(\ u\ \leq\ \neg^\delta v\ \iff\ v\leq \neg^{\delta\flat} u\ \right).
\]
In particular, for each $x\in X$ we have
\[
\nu(i(x)) = \neg^{\delta\sharp}\mu(x) \text{ and } \mu(i(x)) = \neg^{\delta\flat}\nu(x),
\]
or, said differently,
\[ F_{i(x)} = \{a \in A \ \mid\ \neg a \in I_x\} \text{ and } I_{i(x)} = \{a \in A \ \mid\ \neg a \in F_x\}.\]
\end{dfn}

In order to obtain a dual description of \m, we consider the upper and lower canonical extensions, $\m^\pi$ and $\m^\sigma$, which are in general different.

Consider first $\m^\pi$. From the general theory of canonical extensions, we know that $\m^\pi$ preserves arbitrary non-empty meets in the first coordinate and reverses arbitrary non-empty joins in the second coordinate. In addition, $\m^\pi$ preserves finite joins in the first coordinate and reverses finite meets in the second coordinate because \m\ has these properties and they are canonical by Proposition \ref{prop:canonicity}. Thus, for each $y\in X$, the map $R_{\m^\pi, \nu(y)}\colon A^\delta \to A^\delta$ given by $R_{\m^\pi,\nu(y)}(u) = u\m^\pi\nu(y)$ preserves arbitrary non-empty meets. Viewing this as a map $R_{\m^\pi,\nu(y)}\colon A^\delta \to [0,\neg^\delta\nu(y)]$ (which is possible because the map $R_{\m^\pi,\nu(y)}$ has $\neg^\delta\nu(y)$ as the maximum of its image), it is completely meet preserving. The corresponding map $A^\delta\times X\to [0,\neg^\delta\nu(y)]$ defined by $(u,y)\mapsto R_{\m^\pi,\nu(y)}(u)$ thus has a right co-residual $\m^{\pi\flat}\colon [0, \neg^\delta \nu(y)]\times X \to A^\delta$, which is uniquely determined by the property:
\[
\forall u,v\in A^\delta\ \left(\ u\ \leq\ v\m^\pi \nu(y)\ \iff\ (\ u\leq\neg^\delta\nu(y)\ \text{ and }\  u\m^{\pi\flat}\nu(y)\leq v\ )\right).
\]
Also, for each $y\in X$, since $R_{\m^\sigma,\mu(y)}\colon u\mapsto \ u\m^\sigma\mu(y)$ preserves arbitrary joins (including the empty join), the corresponding binary map $(u,y)\mapsto R_{\m^\sigma,\mu(y)}(u)$ has right residual \mbox{$\m^{\sigma\sharp}\colon A^\delta\times X\to A^\delta$}, uniquely determined by:
\[
\forall u,v\in A^\delta\ \left(\ u\m^\sigma \mu(y)\, \leq\, v\ \iff\ u\,\leq\, v\m^{\sigma\sharp}\mu(y)\right).
\]
The following fundamental fact about double quasioperators is what makes it possible to obtain functional duals for these operations.

\begin{prop}
Let $(A,\m)$ be a \m-algebra, $X$ the dual space of $A$, and let $y\in X$. Then the following hold:
\begin{enumerate}
\item If $j\in J^\infty(A^\delta)$ and $j\leq\neg^\delta \nu(y)$, then either $j \m^{\pi\flat}\nu(y)=0$ or $j \m^{\pi\flat}\nu(y)\in J^\infty(A^\delta)$;
\item If $m\in M^\infty(A^\delta)$, then either $m \m^{\sigma\sharp}\mu(y)=1$ or $m \m^{\sigma\sharp}\mu(y)\in M^\infty(A^\delta)$.
\end{enumerate}
\end{prop}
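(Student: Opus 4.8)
The two items are order-dual, so I would prove (2) and then read off (1) by dualising. Write $g$ for the map $u\mapsto u\m^\sigma\mu(y)$, so that $m\m^{\sigma\sharp}\mu(y)=g^\sharp(m)$. Two properties of $g$ drive the argument: it preserves arbitrary joins (as already observed in the construction of $\m^{\sigma\sharp}$), and it preserves finite meets in its argument, the latter because $\m$ preserves finite meets in the first coordinate and this is canonical, exactly dual to the finite-join preservation of $\m^\pi$ argued above. Now set $j=\kappa^{-1}(m)\in J^\infty(A^\delta)$, so that $m=\kappa(j)$ and, for every $z\in A^\delta$, $z\leq m$ iff $j\not\leq z$. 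Then $U:=\{u\in A^\delta : g(u)\leq m\}=\{u : j\not\leq g(u)\}$ is closed under arbitrary joins: if $j\not\leq g(u_t)$ for all $t$, then, since $g$ preserves joins and $j$ is completely join-prime (recall that $A^\delta$ is the lattice of down-sets of $J^\infty(A^\delta)$, in which the completely join-irreducibles are completely join-prime), $j\not\leq\bigvee_t g(u_t)=g(\bigvee_t u_t)$. Hence $w:=g^\sharp(m)=\bigvee U$ is the greatest element of $U$, and in particular $j\not\leq g(w)$.

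Next I would show that $w$ is meet-irreducible in the finitary sense, which is exactly where the full double-quasioperator hypothesis is used. Suppose $a,b>w$. Since $w=\max U$, neither $a$ nor $b$ lies in $U$, so $j\leq g(a)$ and $j\leq g(b)$, whence $j\leq g(a)\wedge g(b)=g(a\wedge b)$ by finite-meet preservation; thus $a\wedge b\notin U$ and so $a\wedge b>w$. Consequently, provided $w\neq 1$, the set $\{u : u>w\}$ of strict upper bounds of $w$ is down-directed.

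It remains to upgrade finitary to complete meet-irreducibility, i.e. to show that $w^\ast:=\bigwedge\{u : u>w\}$ satisfies $w^\ast>w$ whenever $w\neq 1$; this is the main obstacle. By the previous paragraph $\{u : u>w\}$ is down-directed, and for each such $u$ we have $u\notin U$, i.e. $j\leq g(u)$. If one knew $g(w^\ast)=\bigwedge\{g(u) : u>w\}$, then $j\leq g(w^\ast)$ would follow at once (a lower bound of lower bounds), giving $w^\ast\notin U$, hence $w^\ast\neq w$ and so $w^\ast>w$, as required. The difficulty is that $\m^\sigma$ does not preserve arbitrary meets, so $g$ need not commute with this meet. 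The plan is to push $g$ through the down-directed meet using Esakia's Lemma \ref{lem:Esakia}(2), which does deliver such preservation, but only for down-directed families of filter elements, on which moreover $\m^\sigma$ and $\m^\pi$ agree. Bridging the gap between the down-directed family $\{u : u>w\}$ of arbitrary elements and a down-directed family of filter elements with the same meet, and then transferring the inequality $j\leq g(\cdot)$ back to $w^\ast$, is the one place where the canonical-extension machinery, namely density and the compactness of $A$ in $A^\delta$, is needed in earnest; I expect this reduction to be the crux of the proof.

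Finally, (1) is obtained by the order-dual argument applied to $h\colon v\mapsto v\m^\pi\nu(y)$, regarded as a map into $[0,\neg^\delta\nu(y)]$. Here $h$ preserves arbitrary non-empty meets and finite joins, the co-residual $\m^{\pi\flat}(\cdot,\nu(y))$ is its lower adjoint, and the hypothesis $j\leq\neg^\delta\nu(y)$ is precisely what places $j$ in the domain $[0,\neg^\delta\nu(y)]$ on which $\m^{\pi\flat}$ is defined. Dualising the three steps above: $V:=\{v : j\leq h(v)\}$ is closed under non-empty meets, so $k:=j\m^{\pi\flat}\nu(y)=\bigwedge V$ is its least element; finite join-irreducibility of $k$ follows from finite-join preservation of $h$ together with the join-primeness of $j$; and, when $k\neq 0$, complete join-irreducibility follows from preservation of the up-directed join $\bigvee\{v : v<k\}$ via Esakia's Lemma \ref{lem:Esakia}(1), modulo the same bridging step, now for up-directed families of ideal elements. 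This yields $k\in J^\infty(A^\delta)\cup\{0\}$, completing the proof.
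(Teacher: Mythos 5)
The paper does not prove this proposition itself --- it defers to \cite[Thm.~4.4]{GePr07a} and \cite[Prop.~6.4]{GGM2014} --- and your outline follows essentially the same strategy as those proofs. However, the decisive step is missing, and you say so yourself: you reduce item (2) to showing that $w^\ast=\bigwedge\{u \mid u>w\}$ is strictly above $w:=g^\sharp(m)$ when $w\neq 1$, observe that this would follow if $g=R_{\m^\sigma,\mu(y)}$ commuted with that down-directed meet, and then only \emph{announce} that density, compactness and Esakia's Lemma will bridge the gap between the down-directed family $\{u \mid u>w\}$ of arbitrary elements and a down-directed family of filter elements. As written, this is a genuine gap: $g$ really does fail to preserve arbitrary meets, Lemma~\ref{lem:Esakia} applies only to families of filter elements, and no mechanism is given for producing such a family or for transferring the inequality $j\leq g(\cdot)$ across the replacement. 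Everything before and after this point is correct (the reduction to $U=\{u \mid j\not\leq g(u)\}=\downset w$ via $\kappa$, the finitary meet-irreducibility from finite-meet preservation of $\m^\sigma$ in the first coordinate, and the set-up of the order-dual argument for item (1)), but the theorem is exactly as hard as the step you postponed.

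For the record, the bridge is where the operator structure of $\m^\sigma$ enters, and it can be closed as follows. Assume $w\neq 1$ and $w\notin M^\infty(A^\delta)$, so that $T:=\{u \mid u>w\}$ satisfies $\bigwedge T=w$ and $j\leq g(t)$ for every $t\in T$. Since $\mu(y)\in M^\infty(A^\delta)\subseteq I(A^\delta)$ is a filter element of $(A^\delta)^{\op}$, the defining formula for the $\sigma$-extension of an operator gives $g(t)=\bigvee\{\,g(p) \mid p\in F(A^\delta),\ p\leq t\,\}$; as $j$ is completely join-prime, choose $p_t\in F(A^\delta)$ with $p_t\leq t$ and $j\leq g(p_t)$. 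The closure $P$ of $\{p_t \mid t\in T\}$ under finite meets stays inside $F(A^\delta)$ and retains the property $j\leq g(\cdot)$, because $g$ preserves finite meets --- note that this single use of finite-meet preservation subsumes your separate finitary-irreducibility step. Now $P$ is a down-directed family of filter elements with $\bigwedge P\leq\bigwedge T=w$, so Lemma~\ref{lem:Esakia}(2) (applied to $\m^\sigma$ in the first coordinate, with the second frozen at the ideal element $\mu(y)$) yields $j\leq\bigwedge_{p\in P}g(p)=g(\bigwedge P)\leq g(w)$, contradicting $j\not\leq g(w)$. The order-dual insertion for item (1) writes $h(s)=s\m^\pi\nu(y)$ as $\bigwedge\{h(q) \mid q\in I(A^\delta),\ s\leq q\}$, selects for each $s<k$ an ideal element $q_s\geq s$ with $j\not\leq h(q_s)$ (here only the definition of infimum is needed), closes under finite joins using join-primeness of $j$ and finite-join preservation of $h$, and applies Lemma~\ref{lem:Esakia}(1). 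With these two insertions your argument is complete and coincides with the cited proofs.
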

\begin{proof}
See the proof of \cite[Thm. 4.4]{GePr07a}, or also \cite[Prop. 6.4]{GGM2014}.
\end{proof}
Now it is interesting to understand for which completely join irreducibles $j\in [0, \neg^\delta \nu(y)]$ we have $j \m^{\pi\flat}\nu(y)=0$, and for which completely meet irreducibles we have $m \m^{\sigma\sharp}\mu(y)=1$.

\begin{prop}
Let $(A,\m)$ be a \m-algebra, $X$ the dual space of $A$, and let $x,y\in X$. The following three conditions are equivalent:
\begin{enumerate}
\item[(i)] $\nu(x)\leq\neg^\delta\nu(y)$;
\item[(ii)] $i(x)\nleq y$;
\item[(iii)] $(x,y)\in\bigcup_{a\in A}\left[\widehat{\neg a}\times\widehat{a}\right]$.
\end{enumerate}
Moreover, if these conditions hold then $\nu(x)\m^{\pi\flat}\nu(y)\neq0$. Furthermore, the following three conditions are equivalent:
\begin{enumerate}
\item[(iv)] $\mu(x)\m^{\sigma\sharp}\mu(y)\neq1$;
\item[(v)] $y\leq i(x)$;
\item[(vi)] $(x,y)\in\bigcap_{a\in A}\left([\widehat{\neg a}\times X]\cup[X\times\widehat{a}]\right)$.
\end{enumerate}
\end{prop}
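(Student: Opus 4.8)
The plan is to translate all six conditions into order relations among the irreducible elements $\nu(\cdot)\in J^\infty(A^\delta)$ and $\mu(\cdot)\in M^\infty(A^\delta)$, and then to pass between them using two devices: the adjunction identities for $\neg^\delta$ recorded in the definition of $i$, namely $\neg^\delta u\leq v\iff\neg^{\delta\sharp}v\leq u$ and $u\leq\neg^\delta v\iff v\leq\neg^{\delta\flat}u$, together with $\nu(i(x))=\neg^{\delta\sharp}\mu(x)$ and $\mu(i(x))=\neg^{\delta\flat}\nu(x)$; and the order-isomorphism $\kappa\colon J^\infty(A^\delta)\to M^\infty(A^\delta)$, for which $b\leq\kappa(j)\iff j\not\leq b$ holds for all $b\in A^\delta$ and $j\in J^\infty(A^\delta)$ (this uses that $j$ is completely join-prime in the distributive lattice $A^\delta$). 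I will freely use that $\mu$ and $\nu$ are order isomorphisms, that $\mu=\kappa\circ\nu$, and the dictionary $x\leq y\iff I_x\subseteq I_y\iff F_y\subseteq F_x$.

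For (i)$\Leftrightarrow$(ii), the $\flat$-adjunction with $u=\nu(x)$ and $v=\nu(y)$ rewrites (i) as $\nu(y)\leq\neg^{\delta\flat}\nu(x)=\mu(i(x))=\kappa(\nu(i(x)))$, and the $\kappa$-identity turns this into $\nu(i(x))\not\leq\nu(y)$, i.e.\ $i(x)\not\leq y$. For (ii)$\Leftrightarrow$(iii), transporting the subbasic sets along $\mu$ gives $\widehat{b}=\{z\in X\mid b\in F_z\}$, so (iii) asserts that $\neg a\in F_x$ and $a\in F_y$ for some $a\in A$; by $I_{i(x)}=\{a\mid\neg a\in F_x\}$ and $F_y=A\setminus I_y$ this is exactly $I_{i(x)}\not\subseteq I_y$, which is (ii). For the moreover, assume (i); then $\nu(x)\leq\neg^\delta\nu(y)$, so setting $v=0$ in the defining property of $\m^{\pi\flat}$ gives $\nu(x)\m^{\pi\flat}\nu(y)=0\iff\nu(x)\leq 0\m^\pi\nu(y)$. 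Normality of $\m$ yields $0\m^\pi\nu(y)=0$, so equality would force $\nu(x)=0$, impossible for $\nu(x)\in J^\infty(A^\delta)$; hence $\nu(x)\m^{\pi\flat}\nu(y)\neq0$.

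The second block proceeds dually. Putting $u=1$ in the defining property of $\m^{\sigma\sharp}$ gives $\mu(x)\m^{\sigma\sharp}\mu(y)=1\iff 1\m^\sigma\mu(y)\leq\mu(x)$. Since left translation by $1\in A$ commutes with the $\sigma$-extension and $\neg$ is smooth, $1\m^\sigma\mu(y)=\neg^\delta\mu(y)$, and the $\sharp$-adjunction rewrites $\neg^\delta\mu(y)\leq\mu(x)$ as $\nu(i(x))=\neg^{\delta\sharp}\mu(x)\leq\mu(y)=\kappa(\nu(y))$; the $\kappa$-identity turns this into $\nu(y)\not\leq\nu(i(x))$, i.e.\ $y\not\leq i(x)$. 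Thus $\mu(x)\m^{\sigma\sharp}\mu(y)=1\iff y\not\leq i(x)$, whence (iv)$\Leftrightarrow$(v). Finally, (vi) states that for every $a\in A$ either $\neg a\in F_x$ or $a\in F_y$; taking contrapositives and using $F_{i(x)}=\{a\mid\neg a\in I_x\}$ together with $I_y=A\setminus F_y$ shows that this holds iff $F_{i(x)}\subseteq F_y$, i.e.\ $y\leq i(x)$, giving (v)$\Leftrightarrow$(vi).

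The adjunction manipulations are routine; the steps demanding the most care are keeping the passage between $\nu$ and $\mu$ (equivalently, between prime filters and prime ideals) coherent through $\kappa$ and the two order dictionaries, and justifying the auxiliary identities $0\m^\pi\nu(y)=0$ and $1\m^\sigma\mu(y)=\neg^\delta\mu(y)$, which respectively encode the normality of $\m$ and the compatibility of translation by a lattice element with the canonical extension.
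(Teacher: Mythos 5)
Your proof is correct and follows essentially the same route as the paper's: the adjunction identities for $\neg^\delta$, the $\kappa$-dictionary between $J^\infty(A^\delta)$ and $M^\infty(A^\delta)$, and the (co-)residuation properties defining $\m^{\pi\flat}$ and $\m^{\sigma\sharp}$. The only cosmetic differences are that you use the $\flat$-adjunction where the paper uses $\sharp$ for (i)$\Leftrightarrow$(ii), and you obtain (ii)$\Leftrightarrow$(iii) and (v)$\Leftrightarrow$(vi) directly through the prime filter/ideal dictionary, where the paper instead expands $\neg^\delta\nu(y)$ and $\neg^\delta\mu(y)$ as a join/meet over elements of $A$ and invokes the complete join-irreducibility of $\nu(x)$.
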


\begin{proof}
Note that
\begin{align*}
\nu(x)\leq\neg^\delta\nu(y) &\iff \neg^\delta\nu(y)\nleq \mu(x)\\
                                           &\iff \neg^{\delta\sharp}\mu(x)\nleq \nu(y)\\
                                           &\iff \nu(i(x))\nleq\nu(y)\\
                                           &\iff i(x)\nleq y.
\end{align*}
For the last condition of the first equivalence we start from
\[
\nu(x)\leq\neg^\delta\nu(y)=\bigvee\{\neg a\mid y\in\widehat{a}\}.
\]
Since $\nu(x)$ is completely join-irreducible, this is equivalent to the existence of $a\in A$ with $y\in\widehat{a}$ and $x\in\widehat{\neg a}$, thus yielding the last condition. For the moreover statement, note that $\nu(x)\not\leq 0 = 0\m^\pi\nu(y)$, so $\nu(x)\m^{\pi\flat}\nu(y)\neq 0$ by residuation.

For the second set of equivalences, we have
\begin{align*}
\mu(x)\m^{\sigma\sharp}\mu(y)\neq1 &\iff 1\nleq\mu(x)\m^{\sigma\sharp}\mu(y)\\
                                           &\iff \neg^\delta\mu(y)\nleq\mu(x)\\
                                           &\iff \neg^{\delta\sharp} \mu(x) \nleq \mu(y) \\
                                           &\iff \nu(y)\leq \nu(i(x))\\
                                           &\iff y \leq i(x).
\end{align*}
For the last condition of the second equivalence we start from
\[
\nu(x)\leq\neg^\delta\mu(y)=\bigwedge\{\neg a\mid y\not\in\widehat{a}\}.
\]
By definition of infima, this is equivalent to  $y\in\widehat{a}$ or $x\in\widehat{\neg a}$ for all $a\in A$ thus yielding the last condition.
\end{proof}

We are now ready to define the two partial functions which will account for \m\ on the dual of $A$.

\begin{dfn}\label{dfn:plusstar}
Let $(A,\m)$ be a \m-algebra and $X$ the dual space of $A$. Let
\[
\dom(+) := \bigcap_{a\in A}\left([\,\widehat{\neg a}\times X]\cup[X\times\widehat{a}\,]\right) = \{(x,y) \in X^2 \mid y \leq i(x)\}
\]
and let
\[
+\ \colon \dom(+) \to X, \ x+y=\mu^{-1}(\mu(x)\m^{\sigma\sharp}\mu(y)).
\]
Further define
\[
\dom(\star) := \bigcup_{a\in A}\left[\,\widehat{\neg a}\times\widehat{a}\,\right] = \{(x,y) \in X^2 \mid i(x) \nleq y\}
\]
and
\[
\star\ \colon \dom(\star) \to X, \ x\star y=\nu^{-1}( \nu(x)\m^{\pi\flat}\nu(y)).\\
\]
We call the tuple $(X,i,+,\star)$ the \emph{extended dual space} of $(A,\m)$.
\end{dfn}

The definitions of $+$ and $\star$ can be rephrased in terms of prime filters and prime ideals, as follows.
\begin{lem}\label{lem:plusstardef}
Let $(A,\m)$ be a \m-algebra and $(X,i,+,\star)$ its extended dual space.

For any $(x,y) \in \dom(+)$, we have:
\begin{enumerate}
\item $F_{x + y} = \{a \in A \mid \text{for all } b \in A, \text{ if } b \in I_y \text{ then } a \ominus b \in F_x) \}$,
\item $I_{x + y} = \{a \in A \mid \text{there exists } b \in A \text{ such that } b \in I_y \text{ and } a \ominus b \in I_x\}.$
\end{enumerate}
For any $(x,y) \in \dom(\star)$, we have:
\begin{enumerate}

\item[3.] $F_{x \star y} = \{a \in A \mid \text{there exists } b \in A \text{ such that } b \in F_y \text{ and } a \ominus b \in F_x\}.$
\item[4.] $I_{x \star y} = \{a \in A \mid \text{for all } b \in A, \text{ if } b \in F_y \text{ then } a \ominus b \in I_x\},$
\end{enumerate}
\end{lem}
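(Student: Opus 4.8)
The plan is to prove statements 1 and 3 directly and then obtain 2 and 4 for free by complementation, using the connecting identity $F_x^{\mathsf{c}}=I_x$ (so that $F_z^{\mathsf{c}}=I_z$ for every $z\in X$); the real content thus lies in computing the two ``functional'' halves. Consider first statement 2, as it matches the definition of $+$ most directly. Writing $z=x+y$, by definition $\mu(z)=\mu(x)\ominus^{\sigma\sharp}\mu(y)$, and since $I_z=A\cap\downset\mu(z)$ we have $a\in I_{x+y}\iff a\leq\mu(x)\ominus^{\sigma\sharp}\mu(y)$. Applying the residuation identity defining $\ominus^{\sigma\sharp}$ (with $u=a$ and $v=\mu(x)$) turns this into $a\ominus^\sigma\mu(y)\leq\mu(x)$. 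The key step is to evaluate $a\ominus^\sigma\mu(y)$ for $a\in A$: fixing the first coordinate at $a\in A$ reduces the binary $\sigma$-extension to the unary $\sigma$-extension of the map $b\mapsto a\ominus b$, and since $\ominus$ has type $(1,\mathsf{op})$ this unary map reverses both binary meet and join, hence is smooth, so its $\sigma$-, $\pi$-, and $\delta$-extensions coincide. Because $\mu(y)=\bigvee I_y$ is a join of the (up-directed, in $A$) ideal $I_y$ of ideal elements, Esakia's Lemma (Lemma \ref{lem:Esakia}, read in $A^\mathsf{op}$ in the second coordinate, where the order is reversed) yields $a\ominus^\sigma\mu(y)=\bigwedge\{a\ominus b\mid b\in I_y\}$, a down-directed meet of elements of $A$. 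Finally, since $\mu(x)\in M^\infty(A^\delta)$ is completely meet-irreducible, the covering property $w\not\leq\mu(x)\iff\nu(x)\leq w$ gives $\bigwedge\{a\ominus b\mid b\in I_y\}\leq\mu(x)\iff\exists b\in I_y,\ a\ominus b\leq\mu(x)$; unwinding $b\in I_y$ and $a\ominus b\leq\mu(x)$ as $a\ominus b\in I_x$ delivers statement 2. Statement 1 then follows by complementation, since $a\in F_{x+y}\iff a\notin I_{x+y}$ and $a\ominus b\notin I_x\iff a\ominus b\in F_x$.

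Statement 3 is handled dually. With $z=x\star y$ we have $\nu(z)=\nu(x)\ominus^{\pi\flat}\nu(y)$ and $F_z=A\cap\upset\nu(z)$, so $a\in F_{x\star y}\iff\nu(x)\ominus^{\pi\flat}\nu(y)\leq a$. Here the co-residuation identity for $\ominus^{\pi\flat}$ applies because $(x,y)\in\dom(\star)$ guarantees $\nu(x)\leq\neg^\delta\nu(y)$ (this is exactly the equivalence (iii)$\iff$(i) of the preceding proposition), so the domain condition is met and the identity converts the inequality into $\nu(x)\leq a\ominus^\pi\nu(y)$. The same smooth section map, now evaluated at the filter element $\nu(y)=\bigwedge F_y$ via the join-preserving half of Esakia's Lemma, gives $a\ominus^\pi\nu(y)=\bigvee\{a\ominus b\mid b\in F_y\}$, an up-directed join; and complete join-irreducibility of $\nu(x)$ (via the dual covering property $\nu(x)\leq w\iff w\not\leq\mu(x)$) yields $\nu(x)\leq\bigvee\{a\ominus b\mid b\in F_y\}\iff\exists b\in F_y,\ \nu(x)\leq a\ominus b$, i.e.\ $\exists b\in F_y,\ a\ominus b\in F_x$. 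This is statement 3, and statement 4 follows again by complementation.

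I expect the main obstacle to be the evaluation of $a\ominus^\sigma\mu(y)$ and $a\ominus^\pi\nu(y)$ as the indicated directed meet and join: this requires justifying that fixing a coordinate at an element of $A$ reduces the binary extension to the (smooth) unary one, and then applying Esakia's Lemma with careful attention to the order reversal in the $\mathsf{op}$-coordinate, so that the ideal $I_y$ contributes a meet while the filter $F_y$ contributes a join. The remaining manipulations — the residuation and co-residuation rewrites, the two complete-irreducibility ``prime'' steps, and the two complementation arguments — are then routine.
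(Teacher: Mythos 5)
Your proposal is correct and follows essentially the same route as the paper's proof: unwind the definitions of $+$ and $\star$ via $\mu$ and $\nu$, apply (co-)residuation, evaluate $a \m^\sigma \mu(y)$ and $a \m^\pi \nu(y)$ as a meet over $I_y$ and a join over $F_y$ respectively, and finish with join-/meet-primality of $\nu(x)$, $\mu(x)$ and complementation. The only differences are cosmetic — you prove the ``exists'' items directly and get the ``for all'' items by complementation (the paper does items 1 and 2 the other way around), and you justify the filter/ideal-element evaluations via smooth sections and Esakia's Lemma where the paper simply asserts them — so no further comment is needed.
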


\begin{proof}
Let $(x,y) \in \dom(+)$. For any $a \in A$, we have
\begin{align*}
\nu(x + y) \leq a &\iff a \nleq \mu(x+y) = \mu(x) \m^{\sigma\sharp} \mu(y) \\
&\iff a \m^\sigma \mu(y) \nleq \mu(x) \\
&\iff \nu(x) \leq a \m^\sigma \mu(y) = \bigwedge \{ a \m b \mid b \leq \mu(y)\}.
\end{align*}
From this, the first item follows by the definition of infimum. The second item now follows, because $I_{x+y}$ is the complement of $F_{x+y}$. The proof of the last two items is similar.
\end{proof}

\subsection{Properties of extended dual spaces of \m-algebras}
For the following, it is convenient to introduce the following notation. Suppose that $X$ is a Priestley space whose topology is $\tau$. By $\tau^\uparrow$ we mean the topology consisting of all open up-sets in $\tau$, and by $\tau^\downarrow$ we mean the topology consisting of all open down-sets in $\tau$.
See, for example, \cite[Sec.~2]{GGM2014} for more information on the relationships between $\tau$, $\tau^\uparrow$, and $\tau^\downarrow$.

\begin{prop}\label{prop:dualproperties}
The extended dual space $(X,i,+,\star)$ of a \m-algebra $(A,\m)$ has the following properties.
\begin{enumerate}
\item The domain of $+$ is a closed down-set.
\item The domain of $\star$ is an open down-set.
\item The function $+$ is (jointly) upper continuous, i.e., $+$ is continuous when viewed as a function from $\dom(+)$ to $(X,\tau^\uparrow)$, where $\dom(+)$ is equipped with the subspace topology of the product $(X,\tau^\uparrow) \times (X,\tau^\uparrow)$.
\item The function $\star$ is (jointly) lower continuous, i.e., $\star$ is continuous when viewed as a function from $\dom(\star)$ to $(X,\tau^\downarrow)$, where $\dom(\star)$ is equipped with the subspace topology of the product $(X,\tau^\downarrow) \times (X,\tau^\downarrow)$.
\item The function $+$ is order-preserving, i.e., for any $(x,y) \in \dom(+)$, $x' \leq x$ and $y' \leq y$, we have $x' + y' \leq x + y$.
\item The function $\star$ is order-preserving, i.e., for any $(x,y) \in \dom(\star)$, $x \leq x'$ and $y \leq y'$, we have $x \star y \leq x' \star y'$.
\end{enumerate}

\end{prop}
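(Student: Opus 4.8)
The plan is to derive all six clauses from the descriptions of $+$ and $\star$ in Lemma~\ref{lem:plusstardef}, together with three standard facts about the Priestley space $X$: each $\widehat{a}$ is a clopen down-set; the families $\{\widehat{a} : a\in A\}$ and $\{(\widehat{a})^{\mathsf c} : a\in A\}$ are bases for $\tau^{\downarrow}$ and $\tau^{\uparrow}$ respectively (see \cite[Sec.~2]{GGM2014}); and $x\le x'$ iff $I_x\subseteq I_{x'}$ iff $F_x\supseteq F_{x'}$ (immediate from the identities relating $\mu$, $\nu$, $I_{(-)}$, $F_{(-)}$ recorded earlier). Clauses (1) and (2) then fall out of the presentations of the domains: since each $\widehat{a}$ is a clopen down-set, each box $\widehat{\neg a}\times X$, $X\times\widehat{a}$, and $\widehat{\neg a}\times\widehat{a}$ is a clopen down-set of $X^2$; hence $\dom(+)$, an intersection of boxes of the first two kinds, is closed and a down-set, while $\dom(\star)$, a union of boxes of the third kind, is open and a down-set.

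The substance of the proposition lies in clauses (3) and (4), and here I would argue by computing preimages of basic opens. For (3) it suffices to show that $(+)^{-1}\!\left((\widehat{a})^{\mathsf c}\right)$ is open in $\dom(+)$ for each $a\in A$. Unravelling the identification $x+y\in(\widehat{a})^{\mathsf c}\iff a\notin F_{x+y}\iff a\in I_{x+y}$ and feeding in the \emph{existential} description $I_{x+y}=\{a\mid \exists b\in I_y,\ a\m b\in I_x\}$ of Lemma~\ref{lem:plusstardef}(2), the conditions $b\in I_y$ and $a\m b\in I_x$ translate to $y\in(\widehat{b})^{\mathsf c}$ and $x\in(\widehat{a\m b})^{\mathsf c}$. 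Therefore
$$(+)^{-1}\!\left((\widehat{a})^{\mathsf c}\right)=\dom(+)\cap\bigcup_{b\in A}\left[(\widehat{a\m b})^{\mathsf c}\times(\widehat{b})^{\mathsf c}\right],$$
a union of products of clopen up-sets, hence open in $(X,\tau^{\uparrow})^2$ and so open in the subspace $\dom(+)$. Clause (4) is the exact dual: using the existential description $F_{x\star y}=\{a\mid\exists b\in F_y,\ a\m b\in F_x\}$ of Lemma~\ref{lem:plusstardef}(3), the preimage of a basic open $\widehat{a}$ of $\tau^{\downarrow}$ comes out as $\dom(\star)\cap\bigcup_{b\in A}\left[\widehat{a\m b}\times\widehat{b}\right]$, a union of products of clopen down-sets.

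Clauses (5) and (6) rest on the very same two existential descriptions. For (5), note first that $x'+y'$ is defined because $\dom(+)$ is a down-set; then $x'\le x$ and $y'\le y$ give $I_{x'}\subseteq I_x$ and $I_{y'}\subseteq I_y$, and substituting into the formula for $I_{x+y}$ shows $I_{x'+y'}\subseteq I_{x+y}$, i.e.\ $x'+y'\le x+y$. For (6), provided $(x',y')$ also lies in $\dom(\star)$ (which is needed for $x'\star y'$ to be defined, since $\dom(\star)$ is a down-set and the hypothesis moves upward), $x\le x'$ and $y\le y'$ give $F_{x'}\subseteq F_x$ and $F_{y'}\subseteq F_y$, and the formula for $F_{x\star y}$ yields $F_{x'\star y'}\subseteq F_{x\star y}$, i.e.\ $x\star y\le x'\star y'$.

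The one point demanding genuine care is the matching of data throughout: it is the \emph{existential} halves of Lemma~\ref{lem:plusstardef} — the ideal description for $+$ and the filter description for $\star$ — that convert preimages into open \emph{unions} and order-inclusions into monotonicity, whereas the universal halves would deliver only closed intersections. This is exactly why $+$ must be paired with the up-set topology $\tau^{\uparrow}$ and $\star$ with the down-set topology $\tau^{\downarrow}$, and it reflects the fact that the $\sigma$-extension governs $+$ while the $\pi$-extension governs $\star$. The only external input is the standard basis description of $\tau^{\uparrow}$ and $\tau^{\downarrow}$, which is where the total order-disconnectedness of $X$ enters.
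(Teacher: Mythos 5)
Your proof is correct and follows essentially the same route as the paper's: clauses (1)--(2) via the clopen down-set presentations of the domains, clauses (3)--(4) by computing preimages of (sub)basic sets of $\tau^\uparrow$ and $\tau^\downarrow$ using Lemma~\ref{lem:plusstardef} (your open-preimage-of-$(\widehat{a})^{\mathsf c}$ computation is just the complement of the paper's closed-preimage-of-$\widehat{a}$ computation), and clauses (5)--(6) from monotonicity, which the paper reads off from $\m^{\sigma\sharp}$ and $\m^{\pi\flat}$ and you read off equivalently from the filter/ideal descriptions. Your explicit caveat in (6) that $(x',y')$ must itself lie in $\dom(\star)$ for the statement to be non-vacuous is a point the paper leaves implicit, and is well taken.
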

\begin{proof}
(1) holds because $\dom(+)$ is an intersection of clopen down-sets, and (2) holds because $\dom(\star)$ is a union of clopen down-sets. For (3), by definition of the topology $\tau^\uparrow$ on $X$, it suffices to prove that $(+)^{-1}(\hat{a})$ is closed in $\dom(+)$ for every $a \in A$. Indeed, using Lemma~\ref{lem:plusstardef}, we have
\[ (+)^{-1}(\hat{a}) = \dom(+) \cap \bigcap_{b \in A} \left[ (\hat{a \ominus b} \times X) \cup (X \times \hat{b})\right], \]
which is a closed set in the subspace $\dom(+)$ of $(X,\tau^\uparrow) \times (X,\tau^\uparrow)$. Similarly, for (4), it suffices to prove that $(\star)^{-1}(\hat{a})$ is open in $\dom(\star)$ for every $a \in A$. Again using Lemma~\ref{lem:plusstardef}, we have
\[ (\star)^{-1}(\hat{a}) = \dom(\star) \cap \bigcup_{b \in A} \left[ \hat{a \ominus b} \cap \hat{b} \right], \]
which is an open set in the subspace $\dom(\star)$ of $(X,\tau^\downarrow) \times (X,\tau^\downarrow)$. Items (5) and (6) follow because both $\m^{\sigma\sharp}$ and $\m^{\pi\flat}$ are order preserving in each coordinate.
\end{proof}

Inspection of the foregoing proof attests that,
\emph{mutatis mutandis},
the previous proposition applies
 more generally
to any bounded distributive lattice expanded by a double quasioperator. The next proposition,
however,
 uses the
 two
additional
 defining
properties of $\m$-algebras.

\begin{prop}\label{prop:dualproperties2}
The extended dual space $(X,i,+,\star)$ of a \m-algebra $(A,\m)$ has the following properties.
\begin{enumerate}
\item The function $+$ is expanding, i.e., for any $(x,y) \in \dom(+)$, $x \leq x + y$.
\item For any $x \in X$, there exists $y_x \in X$ such that $(x,y_x) \in \dom(+)$ and $x + y_x = x$.
\end{enumerate}
\end{prop}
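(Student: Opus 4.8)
The plan is to treat the two items separately, in each case reducing to the concrete descriptions of $F_{x+y}$ and $I_{x+y}$ furnished by Lemma~\ref{lem:plusstardef} and then exploiting the two extra axioms of $\m$-algebras (normality and $a \ominus 0 = a$). No canonical-extension computation should be needed: the prime-filter descriptions make everything combinatorial.

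For item (1), I would argue directly from the prime-filter description. Recall that $x \leq x+y$ is equivalent to $F_{x+y} \subseteq F_x$, since $I_{(-)}$ preserves inclusion and $F_x = A\setminus I_x$, so $F_{(-)}$ reverses the order. By Lemma~\ref{lem:plusstardef}(1), $F_{x+y} = \{a : \text{for all } b \in I_y,\ a \ominus b \in F_x\}$. Since every prime ideal contains $0$, we may instantiate $b = 0$; as $a \ominus 0 = a$ by the third defining property, any $a \in F_{x+y}$ lies in $F_x$. Hence $F_{x+y} \subseteq F_x$, i.e. $x \leq x+y$. This step is routine once Lemma~\ref{lem:plusstardef} is in hand.

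For item (2), the idea is to exhibit $y_x$ explicitly as the point of $X$ corresponding to the set
\[
D := \{b \in A : \text{for all } a \in F_x,\ a \ominus b \in F_x\}.
\]
First I would check that $D$ is a proper prime ideal of $A$. It is a down-set because $\ominus$ reverses order in its second coordinate and $F_x$ is up-closed; it is closed under finite joins because $a \ominus (b \vee b') = (a \ominus b) \wedge (a \ominus b')$ and $F_x$ is meet-closed; and it contains $0$ because $a \ominus 0 = a$. Properness, i.e. $1 \notin D$, follows from normality, since $a \ominus 1 = 0 \notin F_x$. The crucial point is primeness: if $b_1, b_2 \notin D$, witnessed by $a_1, a_2 \in F_x$ with $a_i \ominus b_i \notin F_x$, then $a := a_1 \wedge a_2 \in F_x$, and since $\ominus$ preserves meets in its first coordinate we get $a \ominus b_i \leq a_i \ominus b_i \notin F_x$, whence $a \ominus b_i \notin F_x$; as $a \ominus (b_1 \wedge b_2) = (a \ominus b_1) \vee (a \ominus b_2)$ and $F_x$ is prime, $a \ominus (b_1 \wedge b_2) \notin F_x$, so $b_1 \wedge b_2 \notin D$. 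This establishes that $D$ is prime.

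Having shown $D \in \pri(A)$, let $y_x \in X$ be the unique point with $I_{y_x} = D$. The domain condition $(x, y_x) \in \dom(+)$, i.e. $y_x \leq i(x)$, amounts to $D \subseteq I_{i(x)} = \{b : \neg b \in F_x\}$, which holds by instantiating $a = 1$ in the definition of $D$ (so $\neg b = 1 \ominus b \in F_x$). Finally, $x + y_x = x$ follows from Lemma~\ref{lem:plusstardef}(1): the inclusion $F_{x+y_x} \subseteq F_x$ is exactly item (1) (via $0 \in D$), while $F_x \subseteq F_{x+y_x}$ is immediate from the very definition of $D$, since $b \in D$ means $a \ominus b \in F_x$ for all $a \in F_x$. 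I expect the main obstacle to be the verification that $D$ is prime; this is precisely the step requiring meet-preservation of $\ominus$ in its first coordinate together with meet-closure of the prime filter $F_x$, and it is what makes the explicit ideal $D$ itself---rather than merely some prime ideal contained in it---available as the dual point $y_x$.
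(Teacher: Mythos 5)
Your proof is correct, and for item (2) it takes a genuinely different route from the paper. For item (1) the two arguments are essentially the same idea in different clothing: the paper instantiates the canonicity of $a \m 0 = a$ on $A^\delta$ and uses residuation ($\mu(x) \leq \mu(x) \m^{\sigma\sharp} 0 \leq \mu(x) \m^{\sigma\sharp} \mu(y)$), while you instantiate $b = 0$ in the prime-filter description of Lemma~\ref{lem:plusstardef}(1); since that lemma is itself obtained by residuation, these are equivalent. For item (2), however, the paper argues non-constructively: from $\nu(x) \leq \nu(x) \m^\sigma 0$ and the fact that $\m^{\sigma\sharp}$, being a right residual, turns the meet $0 = \bigwedge_{y} \mu(y)$ into $\bigwedge_{y} \mu(x) \m^{\sigma\sharp} \mu(y)$, one gets $\nu(x) \nleq \bigwedge_{y}\mu(x)\m^{\sigma\sharp}\mu(y)$ and extracts \emph{some} witness $y_x$ by the definition of infimum. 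You instead exhibit the witness explicitly as the prime ideal $D = \{b : \forall a \in F_x,\ a \m b \in F_x\}$, and your verifications (down-set, join-closure, $0 \in D$, properness via normality, primeness via meet-preservation of $\m$ in the first coordinate and primeness of $F_x$, and $D \subseteq I_{i(x)}$ via $a=1$) are all sound. What your approach buys is constructiveness and extra information: as your own argument shows, any $y$ with $x+y=x$ satisfies $I_y \subseteq D$, so your $y_x$ is in fact the \emph{largest} such witness, namely $L_x^\sharp(x)$ in the notation of Proposition~\ref{prop:translation} --- so you are effectively proving a fragment of that later proposition by hand. What the paper's route buys is brevity and uniformity with the surrounding canonical-extension machinery, avoiding the primeness verification entirely.
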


\begin{proof}
For (1), note first that $u \ominus^\sigma 0 = u$ for any $u \in A^\delta$, by Proposition \ref{prop:canonicity}. In particular, for any $x, y\in X$, we have
\[\mu(x) \leq \mu(x)\ominus^{\sigma\sharp} 0 \leq \mu(x)\m^{\sigma\sharp}\mu(y),\]
where, in the second inequality, we use that $\m^{\sigma\sharp}$ is order preserving. Thus, $x \leq x + y$ if $(x,y) \in \dom(+)$.
For (2) note that, for any $x \in X$, we have that $\nu(x) \leq \nu(x) \m^\sigma 0$. 
Hence, by residuation, we have that
\[\nu(x) \nleq \mu(x) \m^{\sigma\sharp} 0 = \bigwedge_{y \in X} \mu(x) \m^{\sigma\sharp} \mu(y),\]
where the second equality holds because $\m^{\sigma\sharp}$ is a right residual, hence preserves arbitrary meets in the second coordinate.
By definition of infimum, pick $y_x \in X$ such that $\nu(x) \nleq \mu(x) \m^{\sigma\sharp} \mu(y_x)$. In particular, $\mu(x) \m^{\sigma\sharp} \mu(y_x) \neq 1$, so $(x,y_x) \in \dom(+)$. By residuation and the definition of $+$, we have $x + y_x \leq x$. By (1), we must have $x + y_x = x$.
\end{proof}

The following lemma is an immediate consequence of \cite[Prop. 4.2]{GePr07b}.
For the benefit of the reader not familiar with the notation of that paper, we record the proof in our setting.
\begin{lem}\label{lem:Rdop}
Let $(A,\m)$ be a \m-algebra with extended dual space $(X,i,+,\star)$.
For any $x, w_1, w_2 \in X$ with $(x,w_1) \in \dom(+)$ and $(x,w_2) \in \dom(+)$, there exists $w_0 \in X$ such that $(x,w_0) \in \dom(+)$, $w_1 \leq w_0$, $w_2 \leq w_0$, and either $x + w_0 = x + w_1$ or $x + w_0 = x + w_2$.
\end{lem}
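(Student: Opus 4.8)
The plan is to transport the statement across the order isomorphism $\mu\colon X\to M^\infty(A^\delta)$ and to argue inside $A^\delta$. Write $p=\mu(x)$, $n_1=\mu(w_1)$, $n_2=\mu(w_2)$, and $c=\mu(i(x))$, so that the hypotheses $(x,w_i)\in\dom(+)$ read $n_i\le c$, and set $m_i=p\m^{\sigma\sharp}n_i=\mu(x+w_i)$. Each $m_i$ is completely meet-irreducible, being $\ne 1$ (because $(x,w_i)\in\dom(+)$) and hence, by the dichotomy established above (either $m\m^{\sigma\sharp}\mu(y)=1$ or it lies in $M^\infty(A^\delta)$), an element of $M^\infty(A^\delta)$. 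Since $\mu$ is an order isomorphism, producing the desired $w_0$ amounts to finding $n_0\in M^\infty(A^\delta)$ with $n_1\vee n_2\le n_0$ and $p\m^{\sigma\sharp}n_0\in\{m_1,m_2\}$. The remaining requirement $(x,w_0)\in\dom(+)$, i.e.\ $n_0\le c$, will then come for free: once $p\m^{\sigma\sharp}n_0\ne 1$, the characterisation $\mu(x)\m^{\sigma\sharp}\mu(y)\ne 1$ if and only if $y\le i(x)$, shown above, forces $n_0\le c$.

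The heart of the matter is to show that $p\m^{\sigma\sharp}(n_1\vee n_2)$ already equals $m_1$ or $m_2$. Since $\m^{\sigma\sharp}$ is order-preserving in its second coordinate, every $n_0\ge n_1\vee n_2$ satisfies $p\m^{\sigma\sharp}n_0\ge m_1\vee m_2$; in view of the order-preservation of $+$ (Proposition \ref{prop:dualproperties}(5)), the conclusion we are after is therefore equivalent to the assertion that $m_1$ and $m_2$ are comparable, with $p\m^{\sigma\sharp}(n_1\vee n_2)$ realising the larger of the two. This comparability is exactly the point at which complete meet-irreducibility of $p=\mu(x)$ is used in an essential way, in combination with the full double-quasioperator behaviour of $\m^\sigma$ (preservation of arbitrary joins in the first coordinate and conversion of meets to joins in the second). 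It is the content I import from \cite[Prop. 4.2]{GePr07b}, and I expect it to be the main obstacle: a short computation shows that it cannot be extracted from monotonicity and the residuation identities alone, since those yield only the meet $p\m^{\sigma\sharp}(n_1\wedge n_2)=m_1\wedge m_2$ and the lower bound $p\m^{\sigma\sharp}(n_1\vee n_2)\ge m_1\vee m_2$.

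Granting the comparability, the construction of $w_0$ is routine. Writing $s=n_1\vee n_2$ and assuming without loss of generality that $p\m^{\sigma\sharp}s=m_1$ (the other case being symmetric and leading to $x+w_0=x+w_2$), I would use that the completely meet-irreducible elements are meet-dense in the doubly algebraic lattice $A^\delta$ to write $s=\bigwedge\{n\in M^\infty(A^\delta)\mid n\ge s\}$, and then invoke that $\m^{\sigma\sharp}$ preserves arbitrary meets in its second coordinate to obtain
\[
m_1 \;=\; p\m^{\sigma\sharp}s \;=\; \bigwedge\{\, p\m^{\sigma\sharp}n \mid n\in M^\infty(A^\delta),\ n\ge s \,\}.
\]
As $m_1$ is completely meet-irreducible, it must occur among the elements being met, so $m_1=p\m^{\sigma\sharp}n_0$ for some $n_0\in M^\infty(A^\delta)$ with $n_0\ge s$. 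Setting $w_0=\mu^{-1}(n_0)$ then gives $w_1\le w_0$ and $w_2\le w_0$; moreover $p\m^{\sigma\sharp}n_0=m_1\ne 1$ forces $n_0\le c$, i.e.\ $(x,w_0)\in\dom(+)$, and finally $x+w_0=\mu^{-1}(m_1)=x+w_1$, as required.
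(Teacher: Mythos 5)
Your opening and closing reductions are sound: transporting the problem through $\mu$, observing that $(x,w_0)\in\dom(+)$ comes for free once $\mu(x)\m^{\sigma\sharp}\mu(w_0)\neq 1$, and lifting from $n_1\vee n_2$ to an actual point $n_0\in M^\infty(A^\delta)$ by writing $n_1\vee n_2$ as a meet of completely meet-irreducibles, pushing the meet through $\m^{\sigma\sharp}$, and invoking complete meet-irreducibility of $m_1$ --- this last step is a legitimate mirror image of what the paper does (the paper instead pushes the corresponding meet through $\m^\sigma$ in the second coordinate, turning it into a join, and uses complete join-irreducibility of $\nu(x)$). But the proof is incomplete: the claim you yourself isolate as ``the heart of the matter,'' namely that $\mu(x)\m^{\sigma\sharp}(\mu(w_1)\vee\mu(w_2))$ equals $m_1$ or $m_2$ (equivalently, that $m_1$ and $m_2$ are comparable), is precisely the mathematical content of the lemma, and you do not prove it --- you defer it to \cite[Prop.~4.2]{GePr07b} while conceding that you do not see how to derive it from the properties you have listed. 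As it stands this is a genuine gap, not a routine citation.

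The missing idea is to switch from the residual $\m^{\sigma\sharp}$ acting on $\mu(x)$ to the operation $\m^\sigma$ itself acting on the completely join-irreducible $\nu(x)$. Rewriting $\mu(x+w_k)=\mu(x)\m^{\sigma\sharp}\mu(w_k)$ by residuation gives $\nu(x)\leq\nu(x+w_k)\m^\sigma\mu(w_k)$ for $k=1,2$; putting $z:=\nu(x+w_1)\vee\nu(x+w_2)$ and using that $\m^\sigma$ sends $\vee$ to $\wedge$ in the second coordinate yields $\nu(x)\leq z\m^\sigma(\mu(w_1)\vee\mu(w_2))$. Writing $\mu(w_1)\vee\mu(w_2)$ as the meet of the $\mu(w_0)$ above it and using that $\m^\sigma$ converts arbitrary meets in the second coordinate into joins, complete join-irreducibility of $\nu(x)$ produces a single $w_0$ with $w_1,w_2\leq w_0$ and $\nu(x)\leq z\m^\sigma\mu(w_0)$. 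The comparability you were missing then drops out of a second application of join-irreducibility: since $\m^\sigma$ preserves binary joins in the first coordinate,
\[
\nu(x)\ \leq\ z\m^\sigma\mu(w_0)\ =\ \bigl(\nu(x+w_1)\m^\sigma\mu(w_0)\bigr)\vee\bigl(\nu(x+w_2)\m^\sigma\mu(w_0)\bigr),
\]
so $\nu(x)\leq\nu(x+w_k)\m^\sigma\mu(w_0)$ for some $k$, whence $\mu(x+w_0)\leq\mu(x+w_k)$ by residuation, while $x+w_k\leq x+w_0$ by monotonicity of $+$. It is this interplay between join-irreducibility of $\nu(x)$ and join-preservation of $\m^\sigma$ in the \emph{first} coordinate that closes the argument, and, as you correctly suspected, it is not recoverable from the meet-preservation properties of $\m^{\sigma\sharp}$ and monotonicity alone.
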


\begin{proof}
Write $z := \nu(x+w_1) \vee \nu(x+w_2)$. Note that, for $k \in \{1,2\}$, we have
\[ \nu(x) \leq \nu(x+w_k) \m^\sigma \mu(w_k) \leq z \m^\sigma \mu(w_k),\]
where the first inequality follows from the definition of $+$ and residuation, and the second inequality from order preservation of $\m^\sigma$ in the first coordinate.
Therefore, since $\m^\sigma$ sends $\vee$ to $\wedge$ in the second coordinate, we deduce that
\[ \nu(x) \leq z \m^\sigma (\mu(w_1) \vee \mu(w_2)).\]
Now, because $\m^\sigma$ reverses arbitrary meets in the second coordinate, we obtain
\[ \nu(x) \leq z \m^\sigma \bigwedge \{ \mu(w_0) \ | \ \mu(w_1) \vee \mu(w_2) \leq \mu(w_0) \} = \bigvee\{ z \m^\sigma \mu(w_0) \ | \ \mu(w_1) \vee \mu(w_2) \leq \mu(w_0)\}.\]
As $\nu(x)$ is completely join irreducible, pick $w_0 \in X$ with $\mu(w_1) \vee \mu(w_2) \leq \mu(w_0)$ and $\nu(x) \leq z \m^\sigma \mu(w_0)$. Clearly, $w_1 \leq w_0$ and $w_2 \leq w_0$. It also follows from $\nu(x)\leq z\m^\sigma\mu(w)$ that $z \nleq \mu(x) \m^{\sigma\sharp} \mu(w_0)$, so that $(x,w_0) \in \dom(+)$. It remains to prove that $x + w_0 \in \{x+w_1,x+w_2\}$.

Since $\m^\sigma$ preserves joins in the first coordinate, we have
\[ \nu(x) \leq z \m^\sigma \mu(w_0) = (\nu(x+w_1) \m^\sigma \mu(w_0)) \vee (\nu(x+w_2) \m^\sigma \mu(w_0)).\]
As $\nu(x)$ is join irreducible, pick $k \in \{1,2\}$ such that $\nu(x) \leq \nu(x+w_k) \m^\sigma \mu(w_0)$. We claim that $x + w_0 = x + w_k$. Indeed, it follows from the definition of $+$ and residuation that $\mu(x+w_0) \leq \mu(x+w_k)$, so $x+w_0 \leq x+w_k$. Moreover, since $+$ is order-preserving and $w_k \leq w_0$, we also have $x + w_k \leq x + w_0$. Thus, $x + w_0 = x + w_k$, as required.
\end{proof}

In Proposition~\ref{prop:translation}, we exhibit important properties of the unary left translation operations $L_{+,x}$ induced by the binary operation $+$.
\begin{prop}\label{prop:translation}
Let $X$ be the extended dual space of a $\m$-algebra $A$. For any $x \in X$, the function $L_x=L_{+,x}$ is well-defined as a map ${\downarrow} i(x) \to {\uparrow} x$, and has a totally ordered image and an upper adjoint $L_x^\sharp \colon {\uparrow} x \to {\downarrow} i(x)$.\end{prop}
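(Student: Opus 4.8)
The plan is to establish the three assertions in turn, relying on the description of $\dom(+)$ from Definition~\ref{dfn:plusstar}, the expanding and order-preservation properties (Propositions~\ref{prop:dualproperties2}(1) and \ref{prop:dualproperties}(5)), the continuity in Proposition~\ref{prop:dualproperties}(3), and, crucially, Lemma~\ref{lem:Rdop}.

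First, well-definedness. By the description of $\dom(+)$, the set of $y$ with $(x,y)\in\dom(+)$ is exactly $\downset i(x)$, so $L_x$ is defined precisely on $\downset i(x)$; and since $+$ is expanding, every value $L_x(y)=x+y$ satisfies $x\le x+y$, so $L_x$ maps into $\upset x$. For the image being totally ordered, I would take arbitrary $w_1,w_2\in\downset i(x)$ and apply Lemma~\ref{lem:Rdop} to obtain $w_0\in\downset i(x)$ with $w_1\le w_0$, $w_2\le w_0$, and $x+w_0\in\{x+w_1,x+w_2\}$. Order-preservation gives $x+w_1\le x+w_0$ and $x+w_2\le x+w_0$; since $x+w_0$ coincides with one of $x+w_1$, $x+w_2$, the other lies below it. Thus $x+w_1$ and $x+w_2$ are comparable, and as $w_1,w_2$ were arbitrary the image of $L_x$ is a chain.

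Second, the upper adjoint. By the standard criterion it suffices to show that for each $z\in\upset x$ the set $T_z:=\{y\in\downset i(x)\mid x+y\le z\}$ has a greatest element, whereupon one sets $L_x^\sharp(z):=\max T_z$. I would verify three facts about $T_z$. (i) It is nonempty: by Proposition~\ref{prop:dualproperties2}(2) there is $y_x\le i(x)$ with $x+y_x=x\le z$, so $y_x\in T_z$. (ii) It is up-directed: given $w_1,w_2\in T_z$, Lemma~\ref{lem:Rdop} produces $w_0\in\downset i(x)$ above both with $x+w_0\in\{x+w_1,x+w_2\}\subseteq\downset z$, whence $w_0\in T_z$. (iii) It is $\tau$-closed: the domain $\downset i(x)$ is a closed down-set (principal down-sets are closed, cf. Proposition~\ref{prop:dualproperties}(1)), and $T_z=L_x^{-1}(\downset z)$; since $\downset z$ is a closed down-set its complement is an open up-set, i.e.\ it is closed in $\tau^\uparrow$, and $L_x$ is continuous into $(X,\tau^\uparrow)$ by Proposition~\ref{prop:dualproperties}(3), so $T_z$ is closed in $(\downset i(x),\tau^\uparrow)$; as $\tau^\uparrow\subseteq\tau$ and $\downset i(x)$ is $\tau$-closed, it follows that $T_z$ is $\tau$-closed in $X$.

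Third, extraction of the maximum and verification. Since $T_z$ is a closed subset of the compact space $X$ it is compact, and the family $\{\upset y\cap T_z\mid y\in T_z\}$ consists of $\tau$-closed sets (principal up-sets are closed) with the finite intersection property by up-directedness; hence its intersection contains some $m$, which then lies in $T_z$ and dominates every element of $T_z$, i.e.\ $m=\max T_z$. The adjunction $x+y\le z\iff y\le L_x^\sharp(z)$ follows immediately: if $y\le L_x^\sharp(z)$ then monotonicity of $L_x$ together with $\max T_z\in T_z$ gives $x+y\le x+\max T_z\le z$, while $x+y\le z$ with $y\le i(x)$ says exactly that $y\in T_z$, so $y\le\max T_z$. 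I expect the main obstacle to be item (iii), namely keeping the interplay between $\tau$ and $\tau^\uparrow$ straight so that $T_z$ emerges as genuinely $\tau$-closed; once closedness and up-directedness are secured, the compactness argument yielding $\max T_z$ is routine.
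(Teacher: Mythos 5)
Your proof is correct. The first two parts (well-definedness via the description of $\dom(+)$ and the expanding property, and the totally ordered image via Lemma~\ref{lem:Rdop} combined with order-preservation) coincide with the paper's argument. Where you genuinely diverge is in producing the upper adjoint. The paper also forms the set $S=\{y\in{\downarrow}i(x)\mid x+y\le z\}$, notes it is nonempty and up-directed, takes $k(x,z):=\sup S$ using that a Priestley space is a dcpo, and then proves $x+k(x,z)\le z$ by a computation in the canonical extension: $\nu(x)\le\bigwedge\{\nu(z)\m^\sigma\mu(y)\mid y\in S\}=\nu(z)\m^\sigma\bigvee\{\mu(y)\mid y\in S\}$, invoking Esakia's Lemma (Lemma~\ref{lem:Esakia}) to commute the directed join past $\m^\sigma$. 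You instead show directly that $T_z$ has a \emph{maximum} by a purely order-topological argument on the dual space: $T_z=L_x^{-1}({\downarrow}z)$ is $\tau$-closed because ${\downarrow}z$ is $\tau^\uparrow$-closed and $L_x$ is $\tau^\uparrow$-continuous (Proposition~\ref{prop:dualproperties}(3)), and then compactness plus the finite intersection property for $\{{\uparrow}y\cap T_z\mid y\in T_z\}$, using up-directedness, yields $\max T_z$. Your route trades Esakia's Lemma and the canonical-extension computation for the upper continuity of $+$ and compactness; it stays entirely on the space side, and it delivers the slightly stronger fact that the supremum is attained in $T_z$ without further argument (in the paper this is only clear after the residuation computation). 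Both are sound; the paper's version is the one that generalizes most directly to settings where one reasons in $A^\delta$, while yours is self-contained given Proposition~\ref{prop:dualproperties}.
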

\begin{proof}
Note that $L_x$ is indeed well-defined considered as a map with domain ${\downarrow} i(x)$ and codomain ${\uparrow} x$ by Definition~\ref{dfn:plusstar} and Proposition~\ref{prop:dualproperties2}.7. To see that the image of $L_x$ is totally ordered, let $w_1, w_2 \in {\downarrow} i(x)$. By Lemma~\ref{lem:Rdop}, pick $w_0 \in {\downarrow} i(x)$ such that $w_1, w_2 \leq w_0$ and, say, $x + w_0 = x + w_1$. Since $w_2 \leq w_0$ and $+$ is order-preserving by Proposition~\ref{prop:dualproperties}.5, we have $x + w_2 \leq x + w_0 \leq x + w_1$. In case instead $x + w_0 = x + w_2$, we obtain in the same way $x + w_1 \leq x + w_2$.

Finally, we show the existence of the upper adjoint. Let $z \in {\uparrow} x$ be arbitrary, and consider the set
\[ S := \{ y \in {\downarrow} i(x) \ | \ x + y \leq z \}.\]
Note that $S$ has a supremum: By Lemma~\ref{lem:Rdop}, $S$ is an up-directed set, by Proposition~\ref{prop:dualproperties}.8, $S$ is non-empty, and $\leq$ is a directed complete partial order. Define $k(x,z) := \sup S$. We claim that $k(x,-)$ is the desired upper adjoint $L_x^\sharp$.

Clearly, $k(x,z) \leq i(x)$; moreover, $L_x(y)\leq z$ trivially implies $y\leq k(x,z)$ for any $y\leq i(x)$. For the converse, it suffices to show that $x + k(x,z) \leq z$. 
Note first that, for any $y \in {\downarrow}i(x)$, we have $x+  y \leq z$ if, and only if, $\nu(x) \leq \nu(z) \m^\sigma \mu(y)$, using the definitions of $+$, $\mu$, $\nu$ and residuation. Therefore,
\begin{align*}
\nu(x) &\leq \bigwedge \{\nu(z) \m^\sigma \mu(y) \ | \ y \in S \}  &\text{(definition of $S$)} \\
&= \nu(z) \m^\sigma \bigvee \{\mu(y) \ | \ y \in S\} &\text{(Lemma \ref{lem:Esakia})} \\
&= \nu(z) \m^\sigma \mu(k(x,z)) &\text{($\mu$ is order-isomorphism)}.
\end{align*}
Hence, $x + k(x,z) \leq z$, as required.
\end{proof}
\begin{rem}
A unary version of the binary function $k$ defined in the proof of Proposition~\ref{prop:translation} has appeared before in the literature on duality for MV-algebras; also see Remark 7.7 in \cite{GGM2014}.
\end{rem}

Since the operations $+$ and $\star$ come from one and the same operation on $A$, it is natural that they should be related. In Proposition~\ref{prop:starminplus} we make this precise.

\begin{prop}\label{prop:starminplus}
Let $(A,\m)$ be a \m-algebra, $X$ the extended dual space of $A$, and $(x,y)\in \dom(\star)$. Then
\[
x\star y=\inf \{x+w\mid (x,w)\in\dom(+) \text{ and }w\nleq y\},
\]
where the infimum is with respect to the order on $X$.
\end{prop}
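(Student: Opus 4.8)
The plan is to show that $x\star y$ is the greatest lower bound of the set $T:=\{x+w\mid (x,w)\in\dom(+)\text{ and }w\nleq y\}$ in $X$; since a greatest lower bound is by definition an infimum, this simultaneously yields existence of the infimum and the asserted equality. I would argue entirely at the level of prime filters. Recall that a point $z\in X$ is determined by $F_z$ and that $z\leq z'$ iff $F_{z'}\subseteq F_z$. Hence it suffices to establish the single filter identity
\begin{equation}\tag{$\dagger$}
F_{x\star y}=\bigcup_{\substack{(x,w)\in\dom(+)\\ w\nleq y}} F_{x+w},
\end{equation}
for then $F_{x+w}\subseteq F_{x\star y}$ shows $x\star y$ to be a lower bound of $T$, while any lower bound $z$ of $T$ satisfies $F_{x+w}\subseteq F_z$ for all admissible $w$, whence $F_{x\star y}\subseteq F_z$ and $z\leq x\star y$.

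To prove $(\dagger)$ I would first translate membership in each filter into an inequality in $A^\delta$. For $a\in A$, using the description $F_z=\{a\in A\mid \nu(z)\leq a\}$ of prime filters together with the residuation defining $\star$ and the hypothesis $(x,y)\in\dom(\star)$ (so that $\nu(x)\leq\neg^\delta\nu(y)$), one obtains $a\in F_{x\star y}\iff \nu(x)\leq a\m^\pi\nu(y)$; and the computation in the proof of Lemma~\ref{lem:plusstardef} gives $a\in F_{x+w}\iff \nu(x)\leq a\m^\sigma\mu(w)$. Thus $(\dagger)$ is equivalent to
\begin{equation}\tag{$\ddagger$}
\nu(x)\leq a\m^\pi\nu(y)\ \iff\ \exists w\ \big(w\nleq y\ \text{and}\ \nu(x)\leq a\m^\sigma\mu(w)\big)
\end{equation}
for every $a\in A$. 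Two auxiliary observations feed into this. First, for $a\in A$ the sections $\m^\sigma(a,-)$ and $\m^\pi(a,-)$ coincide: pinning the first coordinate to an element of $A$, which is both a filter and an ideal element, collapses the gap between the lower and upper extensions, so both equal the smooth extension of the order-reversing lattice homomorphism $b\mapsto a\m b$. Second, by meet-density of $M^\infty(A^\delta)$ together with the equivalence $\nu(w)\nleq\nu(y)\iff\nu(y)\leq\mu(w)$ (an instance of $j\nleq e\iff e\leq\kappa(j)$), one has $\nu(y)=\bigwedge\{\mu(w)\mid w\nleq y\}$.

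With these in hand, $(\ddagger)$ is nearly immediate. For the converse (right-to-left) direction, $w\nleq y$ gives $\mu(w)\geq\nu(y)$, so by order-reversal of $\m^\sigma$ in the second coordinate $a\m^\sigma\mu(w)\leq a\m^\sigma\nu(y)=a\m^\pi\nu(y)$, and $\nu(x)\leq a\m^\sigma\mu(w)$ propagates to the left-hand side. For the left-to-right direction I would rewrite $a\m^\pi\nu(y)=a\m^\sigma\nu(y)=a\m^\sigma\bigwedge\{\mu(w)\mid w\nleq y\}=\bigvee\{a\m^\sigma\mu(w)\mid w\nleq y\}$, using that $\m^\sigma$ reverses arbitrary meets in the second coordinate; since $\nu(x)$ is completely join-irreducible, hence join-prime in the distributive lattice $A^\delta$, the inequality $\nu(x)\leq\bigvee\{a\m^\sigma\mu(w)\}$ produces a single $w$ with $w\nleq y$ and $\nu(x)\leq a\m^\sigma\mu(w)$.

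The one genuinely delicate point — and the step I expect to be the main obstacle — is that this witness $w$ must actually index an element of $T$, that is, it must satisfy $(x,w)\in\dom(+)$, equivalently $w\leq i(x)$, and not merely $w\nleq y$. This is where the extra defining properties of $\m$-algebras enter, and I would resolve it by showing the constraint $w\leq i(x)$ is \emph{automatic}: since $a\leq 1$ and $\m^\sigma$ is order-preserving in the first coordinate, $\nu(x)\leq a\m^\sigma\mu(w)\leq 1\m^\sigma\mu(w)=\neg^\delta\mu(w)$, while the adjunction $\mu(i(x))=\neg^{\delta\flat}\nu(x)$ gives $w\leq i(x)\iff\mu(w)\leq\neg^{\delta\flat}\nu(x)\iff\nu(x)\leq\neg^\delta\mu(w)$. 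Hence any $w$ witnessing $\nu(x)\leq a\m^\sigma\mu(w)$ already lies in $\dom(+)$, so $(\ddagger)$, and therefore $(\dagger)$, follows, completing the identification of $x\star y$ with $\inf T$.
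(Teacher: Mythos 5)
Your proof is correct and follows essentially the same route as the paper's: both arguments reduce the claim to testing against elements $a\in A$, exploit that $\m^{\sigma}$ and $\m^{\pi}$ agree at the mixed points $(a,\nu(y))$, and rest on the approximation $\nu(y)=\bigwedge\{\mu(w)\mid w\nleq y\}$. The only real difference is presentational: the paper works on the ideal side, computing $\mu(x\star y)$ as $\bigwedge\{\mu(x+w)\}$ via meet-preservation of the residual $\m^{\sigma\sharp}$ (with the convention that $\mu(x)\m^{\sigma\sharp}\mu(w)=1$ off $\dom(+)$ silently absorbing your ``automatic domain'' step), whereas you dualize to prime filters and use complete join-primeness of $\nu(x)$ to extract an explicit witness $w$.
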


\begin{proof}
We first prove that
\begin{equation} \label{eq:starplusrelation}
\mu(x \star y) = \mathrm{int} ( \mu(x) \m^{\sigma\sharp} \nu(y) ).
\end{equation}
To this end, note that, for any $a \in A$, we have
\begin{align*}
a \leq \mu(x \star y) &\iff \nu(x \star y) \nleq a & \\
                      &\iff \nu(x) \nleq a \m^\pi \nu(y) = a \m^\sigma \nu(y) & (*) \\
                      &\iff a \m^\sigma \nu(y) \leq \mu(x) & \\
                      &\iff a \leq \mu(x) \m^{\sigma\sharp} \nu(y), &
\end{align*}
where the equality marked $(*)$ follows from the fact that $\m^\sigma$ and $\m^\pi$ agree on ideal elements of $A^\delta \times (A^\delta)^\op$.
Thus, the equality (\ref{eq:starplusrelation}) follows since the completely meet-irreducible $\mu(x\star y)$ is an ideal element.

Now, since $\m^{\sigma\sharp}$ is the right residual of a $(1,\mathsf{op})$ double quasioperator, it preserves arbitrary meets in both coordinates, so
\[ \mu(x) \m^{\sigma\sharp} \nu(y) = \bigwedge \{ \mu(x) \m^{\sigma\sharp} \mu(w) \ | \ \nu(y) \leq \mu(w)\}.\]
 Moreover, since $\mu(x) \m^{\sigma\sharp} \mu(w) = \mu(x+w)$ when $(x,w) \in \dom(+)$, and $\mu(x) \m^{\sigma\sharp} \mu(w) = 1$ otherwise, we obtain
\begin{align}\label{eq:secondrelation}
\mu(x) \m^{\sigma\sharp} \nu(y) &= \bigwedge \{\mu(x + w) \mid (x,w) \in \dom(+) \text{ and } w \nleq y\}.
\end{align}
Now, for any $z \in X$, $\mu(z)$ is an ideal element. Therefore, again using the fact that $\m^\sigma$ and $\m^\pi$ agree on ideal elements, for any $z \in X$,
\begin{align*}
\mu(z) \leq \mu(x \star y) &\iff \nu(x \star y) \nleq \mu(z) & \\
                      &\iff \nu(x) \nleq \mu(z) \m^\pi \nu(y) = \mu(z) \m^\sigma \nu(y) &\\
&\iff \mu(z) \leq \mu(x + w) \text{ for all } (x,w) \in \dom(+) \text{ such that } w \nleq y &(\text{using (\ref{eq:secondrelation})})
\end{align*}
Since $\mu$ is an order isomorphism, it follows from this equivalence that $x \star y$ is indeed the greatest lower bound of the set on the right hand side.
\end{proof}

\begin{rem}[Comparison with \cite{GePr07b}]
\label{rem:dualproperties}
We compare the properties of the binary operations $+$ and $\star$ that we proved in the above to the properties of the ternary relations $R$ and $S$ introduced in a more general setting in \cite{GePr07b}. The properties (1)--(4) in our Prop.~\ref{prop:dualproperties} correspond to ($R_{\mathrm{top}}^\epsilon$) and ($S_{\mathrm{top}}^\epsilon$) in \cite{GePr07b}, and (5) and (6) correspond to ($R_{\mathrm{ord}}^\epsilon$) and ($S_{\mathrm{ord}}^\epsilon$) in \cite{GePr07b}. The conjunction of properties (7) and (8) in Prop.~\ref{prop:dualproperties} corresponds to the specific axiom $a \m 0 = a$ of \m-algebras, which is not considered in \cite{GePr07b}. The property proved in Lem.~\ref{lem:Rdop} corresponds to the property ($R_\mathrm{dop}^\epsilon$) in \cite{GePr07b}, and the property proved in Prop.~\ref{prop:starminplus} corresponds to ($RS^\epsilon_{\leq}$) and ($RS^\epsilon_{\geq}$) in \cite{GePr07b}. 
\end{rem}

\subsection{\m-spaces and their dual algebras}
The above properties can be used to completely characterize the extended dual spaces of \m-algebras, which we call \emph{\m-spaces}. Indeed, in Section \ref{sec:morphisms} we will define an appropriate notion of morphisms with which \m-spaces become a category, and we will prove a duality theorem between $\m$-algebras and $\m$-spaces (Theorem~\ref{thm:mduality}).

The following provides the needed characterization of the extended dual spaces of \m-algebras.
\begin{dfn}\label{def:mspace}
A \emph{\m-space} is a tuple $(X,i,+,\star)$ where:
 \begin{enumerate}
 \item $X$ is a Priestley space,
 \item $i \colon X \to X$ is a continuous order-reversing function,
 \item $+$ is an upper continuous partial function with $\dom(+) = \{(x,y) \in X^2 \mid y \leq i(x)\}$,
 \item $\star$ is a lower continuous partial function with $\dom(\star) = \{(x,y) \in X^2 \mid i(x) \nleq y\}$,
 \item $+$ and $\star$ are order preserving in both coordinates,
 \item for any $(x,y) \in \dom(\star)$,
 \begin{equation}\tag{$\star+$}\label{eq:starplus}
 x \star y = \inf \{x + w \mid (x,w) \in \dom(+) \text{ and } w \nleq y\},
 \end{equation}
\item for any $x \in X$, the image of the left translation map $L_x=L_{+, x} \colon {\downarrow} i(x) \to {\uparrow} x$ is a totally-ordered subset of ${\uparrow} x$, and moreover this function has an upper adjoint $L_x^\sharp \colon {\uparrow} x \to {\downarrow}i(x)$.
 \end{enumerate}
\end{dfn}
\begin{rem}
Note that in Definition \ref{def:mspace}, if $A$ is a bounded distributive lattice and $X$ is the Priestley dual of $A$, then the domain of $\star$ may be equivalently expressed as $\dom(\star)=\{(x,y)\in X^2 \mid \nu(y)\leq\mu(i(x))\}$, or, in terms of prime filters and ideals of $A$, as $\{(x,y)\in X^2 \mid F_y\subseteq I_{i(x)}\}$.
\end{rem}
\begin{rem}
Note that, in a \m-space, both operations $i$ and $\star$ can be defined from the partial operation $+$. Indeed, for any $x \in X$, $i(x)$ is the maximum $y \in X$ such that $(x,y) \in \dom(+)$, and the property (\ref{eq:starplus}) uniquely determines $\star$ in terms of $+$ and $i$. Thus, the data $(X, +)$ with $X$ a Priestley space and $+$ a partial function entirely determines the \m-space $(X,i,+,\star)$. However, stating the definition of \m-spaces in this smaller signature is cumbersome. More importantly, the axiom (MV6) can be expressed dually as a first-order property of the partial operations $+$ and $\star$, including in the language their domains as primitive (see Section~\ref{sec:mv} below).
\end{rem}
We will now show how to define a \m-algebra from a \m-space (Prop.~\ref{prop:spacetoalgebra}), and that any \m-algebra is isomorphic to its double dual (Prop.~\ref{prop:doubledual}). First, in the following definition, starting from a \m-space $X$, we define operations $f_+$ and $f_\star$ on the complete lattice $C$ of down-sets of the poset underlying the \m-space. We then show in Prop.~\ref{prop:spacetoalgebra} that $C$, equipped with these operations, is isomorphic to the canonical extension of a \m-algebra structure on the dual lattice of $X$. The operations $f_+$ and $f_\star$ respectively capture the $\sigma$- and $\pi$-extensions of the operation $\m$ when the canonical extension is realized as the complete lattice of down-sets.

\begin{dfn}\label{dfn:spacetoalgebra}
Let $(X,i,+,\star)$ be a \m-space and let $C$ be the complete lattice of down-sets of $X$.
Denote by $\pi_1 \colon X \times X \to X$ the projection on the first coordinate.
For $u, v \in C$, define
\begin{align*}
f_+(u,v) &:= \pi_1\left[ +^{-1}(u) \cap (X \times (X \setminus v)) \right] \\
&= \{x \in X \mid \text{there exists } w \in X \text{ such that } w \not\in v, (x,w) \in \dom(+) \text{ and } x + w \in u\} \\ \intertext{and}
f_\star(u,v) &:= X \setminus \left( \pi_1 \left[ (X \setminus \star^{-1}(u)) \cap (X \times v) \right] \right) \\
 &= \{x \in X \mid \text{for all } y \in X, \text{ if } y \in v, \text{ then } (x,y) \in \dom(\star) \text{ and } x \star y \in u\}.
\end{align*}
We call the tuple $(C,f_+,f_\star)$ the \emph{complex algebra} of $(X,i,+,\star)$.
\end{dfn}

In the proof of item (6) of Proposition~\ref{prop:spacetoalgebra}, we will need the well-known fact that the topology of open down-sets of a Priestley space is contained in the dual Scott topology of the underlying poset (see, e.g., \cite{Pr1994}). Since we do not use this terminology anywhere else in the paper, we give a direct statement and proof for the convenience of the reader.
\begin{lem}\label{lem:spectralinscott}
Let $X$ be a Priestley space, let $S \subseteq X$ be a down-directed subset, and let $U \subseteq X$ be an open down-set. If the infimum of $S$ lies in $U$, then $S$ intersects $U$ non-trivially.
\end{lem}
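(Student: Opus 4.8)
The plan is to argue by contradiction, exploiting the compactness of $X$ together with the fact that principal down-sets are closed. Write $p := \inf S$, which exists (the underlying poset of a Priestley space is a dual dcpo) and lies in $U$ by hypothesis, and set $F := X \setminus U$, which is a closed up-set since $U$ is an open down-set. Assume toward a contradiction that $S \cap U = \emptyset$, i.e.\ $S \subseteq F$. The strategy is to produce a single element of $S$ whose principal down-set misses $F$ altogether, which will contradict the fact that this element itself lies in $F$. (Here $S$ is nonempty, as directed sets are by convention; this is used below.)

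First I would record two facts. For each $s \in X$ the principal down-set $\downset s$ is closed: if $x \not\leq s$, total order-disconnectedness provides a clopen down-set $D$ with $s \in D$ and $x \notin D$, and since $\downset s \subseteq D$, the set $D^{\mathsf{c}}$ is an open neighbourhood of $x$ disjoint from $\downset s$; hence the complement of $\downset s$ is open. Second, because $p$ is the greatest lower bound of $S$, an element lies below every member of $S$ exactly when it lies below $p$, so $\bigcap_{s \in S} \downset s = \downset p$. Now consider the closed sets $K_s := \downset s \cap F$ for $s \in S$. As $F$ is an up-set and $p \notin F$ (because $p \in U$), no element below $p$ can belong to $F$, so $\downset p \cap F = \emptyset$, and therefore $\bigcap_{s \in S} K_s = \downset p \cap F = \emptyset$.

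The final step invokes compactness: a family of closed subsets of the compact space $X$ with empty intersection has a finite subfamily $K_{s_1}, \dots, K_{s_n}$ (with $n \geq 1$, since $S \neq \emptyset$) whose intersection is already empty. At this point down-directedness of $S$ does the essential work: choosing a common lower bound $s^* \in S$ of $s_1, \dots, s_n$ yields $\downset s^* \subseteq \downset s_i$ for each $i$, hence $K_{s^*} \subseteq \bigcap_{i} K_{s_i} = \emptyset$. But $s^* \in S \subseteq F$ and $s^* \in \downset s^*$, so $s^* \in K_{s^*}$, a contradiction. I expect the only subtle point to be marrying the two ingredients correctly: compactness alone only annihilates finitely many of the sets $K_s$, and it is precisely the down-directedness of $S$ that permits collapsing that finite subfamily into a single principal down-set, thereby forcing the contradiction.
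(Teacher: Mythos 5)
Your proof is correct, but it takes a genuinely different route from the paper's. The paper works concretely: it identifies $X$ with the prime ideal spectrum of a distributive lattice $L$, observes that the infimum of a down-directed family of prime ideals is their intersection $s_0=\bigcap S$, writes the open down-set $U$ as a union of basic clopen down-sets to get $s_0\in\widehat{a}\subseteq U$ for some $a\in L$, and then notes that $a\notin s_0$ forces $a\notin s$ for some $s\in S$, so $s\in\widehat{a}\subseteq U$. Your argument instead stays entirely at the level of the abstract ordered topological space: you use total order-disconnectedness to see that principal down-sets $\downset s$ are closed, the identity $\bigcap_{s\in S}\downset s=\downset(\inf S)$, and compactness to extract a finite subfamily of the closed sets $\downset s\cap(X\setminus U)$ with empty intersection, which down-directedness then collapses to a single $\downset s^*\cap(X\setminus U)=\emptyset$ contradicting $s^*\in X\setminus U$. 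Both are complete; the paper's version is shorter because it outsources the work to the representation theorem and the explicit subbase, while yours is representation-free and makes visible exactly which structural features (compactness, closedness of principal down-sets, and down-directedness) carry the argument. The only points worth polishing are cosmetic: the reason the finite subfamily is nonempty is that the empty intersection equals $X$, which is nonempty because $S$ is, and you should note (as you implicitly do) that down-directedness yields common lower bounds of arbitrary finite nonempty subsets by induction on pairs.
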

\begin{proof}
Without loss of generality, we may identify $X$ with the prime ideals of a distributive lattice $L$, ordered by inclusion and equipped with the Priestley topology. Now, since $S \subseteq X$ is down-directed, its intersection $s_0 = \bigcap S$ is a prime ideal, and this prime ideal must then be the infimum of $S$ in $X$. If $s_0 \in U$, then $s_0 \in \widehat{a} \subseteq U$ for some $a \in L$. By definition, this means that $a \not\in s_0$, so that $a \not\in s$ for some $s \in S$. Thus, $s \in \widehat{a} \subseteq U$, showing that $S \cap U \neq \emptyset$.
\end{proof}

\begin{prop}\label{prop:spacetoalgebra}
Let $(X,i,+,\star)$ be a \m-space with complex algebra $(C,f_+,f_\star)$. Moreover,
 denote by
 $B$ 
the sublattice of $C$ consisting of the clopen down-sets of $X$.
The following properties hold.
\begin{enumerate}
\item $f_+$ is a well-defined binary operation on $C$, which preserves $\bigvee$ in the first coordinate and sends $\bigwedge$ to $\bigvee$ in the second coordinate.
\item $f_\star$ is a well-defined binary operation on $C$, which preserves $\bigwedge$ in the first coordinate and sends $\bigvee$ to $\bigwedge$ in the second coordinate.
\item For any $u, v \in C$, $f_+(u,v) \leq f_\star(u,v)$.
\item For any $u, v_1, v_2 \in C$, $f_+(u,v_1) \wedge f_+(u,v_2) = f_+(u,v_1 \vee v_2)$.
\item For any $u \in C$, $f_+(u,0_B) = u$.
\item For any $a, b \in B$, if $b \neq 0_B$, then $f_+(a,b) = f_\star(a,b)$.
\item The pair $(B,\m_B)$, with $a \m_B b := f_+(a,b)$, is a well-defined \m-algebra.
\end{enumerate}
\end{prop}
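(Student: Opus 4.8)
The plan is to prove items (1)--(6) as statements about $f_+$ and $f_\star$ on $C$ (and on $B$), and then to assemble them into item (7). Throughout I will exploit that joins and meets in $C$ are simply unions and intersections of down-sets, so that the existential/universal descriptions in Definition~\ref{dfn:spacetoalgebra} interact transparently with suprema and infima. First, for (1) and (2) I would check that $f_+(u,v)$ and $f_\star(u,v)$ are genuinely down-sets: if $x' \leq x$ then $i(x)\leq i(x')$ by Definition~\ref{def:mspace}(2), so a witness $w$ for $x$ still satisfies $(x',w)\in\dom(+)$, and $x'+w\leq x+w$ by order preservation (Definition~\ref{def:mspace}(5)) keeps us inside the relevant down-set. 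The preservation laws then follow by unwinding quantifiers: $f_+$ preserves arbitrary $\bigvee$ in the first coordinate because the witness may be chosen in whichever $u_i$ is hit, and sends $\bigwedge$ to $\bigvee$ in the second coordinate because $w\notin\bigcap_i v_i$ means $w\notin v_i$ for some $i$; items (2) are dual.

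Items (3), (4), (5) are short. For (3), a witness $w\notin v$ for $x\in f_+(u,v)$ satisfies $w\nleq y$ for every $y\in v$ (as $v$ is a down-set), whence $(x,y)\in\dom(\star)$ and, by \eqref{eq:starplus}, $x\star y\leq x+w\in u$, so $x\in f_\star(u,v)$. For (4) the inclusion $f_+(u,v_1\vee v_2)\subseteq f_+(u,v_1)\wedge f_+(u,v_2)$ is immediate, and the converse uses Lemma~\ref{lem:Rdop}: witnesses $w_1\notin v_1$ and $w_2\notin v_2$ merge into a single $w_0\geq w_1,w_2$ with $(x,w_0)\in\dom(+)$ and $x+w_0\in\{x+w_1,x+w_2\}\subseteq u$, and $w_0\notin v_1\cup v_2$ since $v_1,v_2$ are down-sets. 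For (5) I first note that Definition~\ref{def:mspace}(7) already encodes both that $+$ is expanding (the codomain of $L_x$ is ${\uparrow}x$) and, via $y_x:=L_x^\sharp(x)$, the existence of $y_x\leq i(x)$ with $x+y_x=x$; then $f_+(u,0_B)=u$ follows, with $\subseteq$ from expansivity and $\supseteq$ from the witness $y_x$.

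The main obstacle is item (6), where compactness must be brought to bear. The inclusion $f_+(a,b)\subseteq f_\star(a,b)$ is item (3), so the content is the converse for $a,b\in B$ with $b\neq 0_B$. Fix $x\in f_\star(a,b)$; I must produce a \emph{single} $w$ with $w\leq i(x)$, $w\notin b$, and $x+w\in a$. For each $y\in b$ we have $x\star y\in a$, and since $a$ is an open down-set and $\{x+w\mid w\leq i(x),\ w\nleq y\}$ is a down-directed subset of the totally ordered image of $L_x$ (Definition~\ref{def:mspace}(7)) with infimum $x\star y$ by \eqref{eq:starplus}, Lemma~\ref{lem:spectralinscott} yields some $w\leq i(x)$ with $w\nleq y$ and $x+w\in a$. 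Hence the open down-sets $X\setminus{\upset}w$, as $w$ ranges over $\{w\leq i(x)\mid x+w\in a\}$, cover the compact set $b$. Extracting a finite subcover indexed by $w_1,\dots,w_n$ and merging them with Lemma~\ref{lem:Rdop} produces $w^*\geq w_1,\dots,w_n$ with $(x,w^*)\in\dom(+)$ and $x+w^*=\max_k(x+w_k)\in a$; moreover $w^*\notin b$, because any $y\in b$ lies in some $X\setminus{\upset}w_k$, so $w_k\nleq y$ and therefore $w^*\nleq y$. This $w^*$ witnesses $x\in f_+(a,b)$. The passage from a witness per $y$ to one uniform witness over the whole clopen $b$ is exactly where topological compactness (together with the directedness of Lemma~\ref{lem:Rdop}) is essential, and it is the step I expect to be delicate; note also that $b\neq 0_B$ is what guarantees a nonempty finite subcover, consistent with (6) failing for $b=0_B$.

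Finally, item (7) assembles the rest. For well-definedness I must see that $f_+(a,b)\in B$, i.e.\ is clopen: for $b=0_B$ this is item (5), and for $b\neq 0_B$ item (6) identifies $f_+(a,b)$ with $f_\star(a,b)$, where upper continuity of $+$ makes $f_+(a,b)$ closed (the clopen down-set $a$ is $\tau^\uparrow$-closed, so $+^{-1}(a)$ is closed in $\dom(+)$, and $\pi_1$ is a closed map since the second factor $(X,\tau^\uparrow)$ is compact) while lower continuity of $\star$ makes $f_\star(a,b)$ open; being both closed and open and a down-set, it lies in $B$. For the axioms of Definition~\ref{dfn:malgebras}: first-coordinate $\vee$-preservation and second-coordinate $\wedge\mapsto\vee$ are instances of item (1); second-coordinate $\vee\mapsto\wedge$ is item (4); first-coordinate $\wedge$-preservation holds because $f_+(\cdot,b)=f_\star(\cdot,b)$ for $b\neq 0_B$ (item (6)) and $f_\star$ preserves $\bigwedge$ in the first coordinate (item (2)), the case $b=0_B$ being trivial by (5); normality $0_B\ominus_B a=0_B$ and $a\ominus_B 1_B=0_B$ are the empty-join and empty-meet instances of item (1); and $a\ominus_B 0_B=a$ is item (5). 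Thus $(B,\ominus_B)$ is a well-defined \m-algebra, the only genuinely hard ingredient being the compactness argument of item (6).
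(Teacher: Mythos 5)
Your proof follows essentially the same route as the paper's: the down-set and preservation checks for (1)--(2), the single-witness argument for (3), merging of witnesses via the totally ordered image of $L_x$ for (4), a unit witness for (5), a compactness argument through Lemma~\ref{lem:spectralinscott} for (6), and the same assembly of clopenness and the \m-algebra axioms for (7). Two of your variations are worth recording. In (5) you derive expansivity and the unit $y_x = L_x^\sharp(x)$ directly from Definition~\ref{def:mspace}(7); this is a genuine gain in precision, since the paper's proof appeals to these facts ``by definition of \m-spaces'' even though they are not literally listed among the axioms of Definition~\ref{def:mspace}. In (6) you cover $b$ by the open down-sets $X\setminus{\upset}w_y$ and merge the finitely many resulting $w_k$ into one witness $w^*$, whereas the paper covers $b$ by clopen down-sets $b_y$ separating $y$ from $w_y$ and then concludes via items (4) and (1); both are correct, and yours is slightly more direct. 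The one thing you must repair is the repeated appeal to Lemma~\ref{lem:Rdop} in items (4) and (6): that lemma is stated and proved only for the extended dual space of a \m-algebra, using $\nu$, $\mu$ and $\m^\sigma$, none of which exist for an abstract \m-space, so invoking it here would be circular in the context of the duality theorem. Fortunately the merging property you need is an immediate consequence of Definition~\ref{def:mspace}(7): given $w_1,\dots,w_n \leq i(x)$, total-orderedness of the image of $L_x$ yields a maximum $x+w_{k_0}$ among the $x+w_k$, and $w^* := L_x^\sharp(x+w_{k_0})$ dominates every $w_k$ and satisfies $x+w^* = x+w_{k_0}$. This is exactly how the paper argues in item (4), and with that substitution your proof is complete.
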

\begin{proof}
(1) Let $u, v \in C$. We need to show that $f_+(u,v)$ is a down-set. Let $x \in f_+(u,v)$ and $x' \leq x$. Pick $w \not\in v$ with $(x,w) \in \dom(+)$ and $x + w \in u$. Since $\dom(+)$ is a down-set, $(x',w) \in \dom(+)$, and since $+$ is order-preserving in the first coordinate, $x' + w \leq x + w$. Since $u$ is a down-set, $x' + w \in u$. Hence, $x' \in f_+(u,v)$. The preservation properties are clear from discrete duality for operators, or easily verified directly.

(2) Similar to (1).

(3) Let $u,v \in C$. Let $x \in f_+(u,v)$ be arbitrary, and pick $w \not\in v$ with $(x,w) \in \dom(+)$ and $x + w \in u$. Let $y \in v$ be arbitrary. Then $w \nleq y$, since $v$ is a down-set, and $y \in v$ but $w \not\in v$. In particular, since $(x,w) \in \dom(+)$ means that $w \leq i(x)$, we must have $i(x) \nleq y$, i.e., $(x,y) \in \dom(\star)$. Moreover, $x \star y \leq x + w$, by property (\ref{eq:starplus}) in the definition of \m-spaces. Hence, we have $x \star y \in u$. Since $y \in v$ was arbitrary, we conclude that $x \in f_\star(u,v)$.

(4) Let $u, v_1, v_2 \in C$. The inclusion from right to left is clear because $f_+$ is order reversing in the second coordinate. Let $x \in f_+(u,v_1) \wedge f_+(u,v_2)$. For $i \in \{1,2\}$, pick $w_i \not\in v_i$ with $(x,w_i) \in \dom(+)$ and $x + w_i \in u$. Since the image of $L_x$ is totally ordered, without loss of generality assume $x + w_1 \leq x + w_2$. Let $w_0 := L_x^\sharp(x+w_2)$. By the definition of $L_x^\sharp$, we have $w_1, w_2 \leq w_0$, so $w_0 \not\in v_1 \vee v_2$. Also by the definition of $L_x$, $x + w_0 = x + w_2$, so $x + w_0 \in u$, so we obtain $x \in f_+(u,v_1 \vee v_2)$.

(5) Let $u \in C$. Suppose that $x \in u$. By definition of \m-spaces, pick $y_x \in X$ such that $(x,y_x) \in \dom(+)$ and $x + y_x = x$. It follows that $x \in f_+(u,0_B)$. Conversely, suppose that $x \in f_+(u,0_B)$. By definition of $f_+$, pick $w \in X$ such that $(x,w) \in \dom(+)$ and $x + w \in u$. Since $x \leq x + w$ in any \m-space, and $u$ is a down-set, we have $x \in u$.

(6) Let $a, b \in B$ with $b \neq 0_B$. By item (3), it suffices to prove that $f_\star(a,b) \leq f_+(a,b)$. Let $x \in f_\star(a,b)$. Let $y\in b$ be arbitrary. By definition of $f_\star(a,b)$, we have $(x,y) \in \dom(\star)$ and $x \star y \in a$. 
Note that $\{x + w \ | \ (x,w) \in \dom(+), w \nleq y\}$ is a totally ordered set with infimum $x \star y \in a$.
By Lemma~\ref{lem:spectralinscott}, we may choose $w_y \nleq y$ with $(x,w_y) \in \dom(+)$ and $x + w_y \in a$. Also, since $w_y \nleq y$, choose $b_y \in B$ such that $y \in b_y$ and $w_y \not\in b_y$. We then have $b \subseteq \bigcup_{y \in b} b_y$, so pick a finite subcover $\{b_{y_i}\}_{i=1}^n$. Since $b \neq 0_B$, we have $n \geq 1$. For each $i \in \{1,\dots,n\}$, we have $x + w_{y_i} \in a$, and $w_{y_i} \not\in b_{y_i}$, so $x \in f_+(a,b_{y_i})$ for each $i \in \{1,\dots,n\}$. Now, using item (4), we have $x \in \bigwedge_{i=1}^n f_+(a, b_{y_i}) =  f_+(a,\bigvee_{i=1}^n b_{y_i})$, and using item (1) $f_+(a,\bigvee_{i=1}^n b_{y_i}) \leq f_+(a,b)$. Hence, $x \in f_+(a,b)$, as required.

(7) We first need to prove that $f_+(a,b)$ is clopen for every $a,b \in B$. Let $a$ and $b$ be clopen down-sets in $X$. If $b = 0$, then $f_+(a,b) = a$ by (5), which is clopen. Now assume $b \neq 0$. Note that, since $\pi_1$ is a continuous map between compact Hausdorff spaces, it is a closed map. By the definition of $f_+$ and continuity of $+$, it follows  that $f_+(a,b)$ is closed. Similarly, by the definition of $f_\star$ and continuity of $\star$, $f_\star(a,b)$ is open. By (6), $f_+(a,b) = f_\star(a,b)$, so this set is clopen. It follows immediately from (1), (2), and (6) that $\m_B$ is a double quasioperator of type $(1,\mathsf{op})$. The fact that $\m_B$ is normal as an operator follows from (1) by taking the empty join in the first coordinate and the empty meet in the second coordinate. Finally, the fact that $0_B$ is a right-unit for $\m_B$ follows from (5). Hence, $(B,\m_B)$ is a \m-algebra.
\end{proof}

\begin{dfn}
For any \m-space $(X,i,+,\star)$, we call $(B,\m_B)$, defined as in the last item of Proposition~\ref{prop:spacetoalgebra}, the \emph{dual \m-algebra} of $(X,i,+,\star)$.
\end{dfn}

\begin{prop}\label{prop:doubledual}
Let $(A,\m_A)$ be a \m-algebra, let $(X,i,+,\star)$ be its extended dual space, and let $(B,\m_B)$ be the dual \m-algebra of $(X,i,+,\star)$. The \m-algebras $(A,\m_A)$ and $(B,\m_B)$ are isomorphic via the Stone-Priestley isomorphism $\hat{(\cdot)} \colon A \to B$. Moreover, this isomorphism extends uniquely to an isomorphism between $(A^\delta, \m_A^\sigma)$ and the reduct $(C,f_+)$ of the complex algebra of $(X,i,+,\star)$.
\end{prop}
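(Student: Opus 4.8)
The plan is to establish the stronger ``moreover'' claim first and then read off the statement about $\hat{(\cdot)}$ by restriction. I would realize the canonical extension concretely as the lattice $C$ of down-sets of $X$ through the map
\[
\eta\colon A^\delta\to C,\qquad \eta(u)=\{x\in X\mid \nu(x)\leq u\}.
\]
Since $\mu(x)=\kappa(\nu(x))$ and $\kappa$ satisfies $j\leq u\iff u\nleq\kappa(j)$ for every $j\in J^\infty(A^\delta)$ and $u\in A^\delta$ (using that completely join-irreducibles are join-prime in a doubly algebraic distributive lattice), this rewrites cleanly as $\eta(u)=\{x\in X\mid u\nleq\mu(x)\}$. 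Because $\nu(x)$ is completely join-irreducible, $\eta$ preserves arbitrary joins and meets, and as the standard realization of $A^\delta$ as the down-sets of $X$ it is a complete lattice isomorphism. For $a\in A$ it specializes to $\eta(a)=\hat a$, so $\eta$ restricts to the Stone--Priestley isomorphism $\hat{(\cdot)}\colon A\to B$ and carries $A^\delta$ onto $C$ and $A$ onto $B$.

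The core step is the identity $\eta(u\m_A^\sigma v)=f_+(\eta(u),\eta(v))$ for all $u,v\in A^\delta$ (write $\m=\m_A$). Fix $x\in X$. Using the clean description of $\eta$, membership on the left reads $u\m^\sigma v\nleq\mu(x)$, and since $\m^{\sigma\sharp}$ is the right residual of $\m^\sigma$ (which exists because $\m^\sigma$ preserves arbitrary joins in the first coordinate by normality), this is equivalent to $u\nleq\mu(x)\m^{\sigma\sharp}v$. On the right, I would unfold Definition~\ref{dfn:spacetoalgebra} and translate each clause through $\eta$: the clause $w\notin\eta(v)$ becomes $v\leq\mu(w)$; the clause $x+w\in\eta(u)$ becomes $u\nleq\mu(x+w)=\mu(x)\m^{\sigma\sharp}\mu(w)$; and the clause $(x,w)\in\dom(+)$ is then automatic, since $u\nleq\mu(x)\m^{\sigma\sharp}\mu(w)$ already forces $\mu(x)\m^{\sigma\sharp}\mu(w)\neq 1$. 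Hence $x\in f_+(\eta(u),\eta(v))$ holds exactly when there is $w\in X$ with $v\leq\mu(w)$ and $u\nleq\mu(x)\m^{\sigma\sharp}\mu(w)$.

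To match the two sides I would use that $M^\infty(A^\delta)=\mu(X)$ is meet-dense, so $v=\bigwedge\{\mu(w)\mid w\in X,\ v\leq\mu(w)\}$, together with the fact (already invoked in Proposition~\ref{prop:starminplus}) that $\m^{\sigma\sharp}$ preserves arbitrary meets in its second coordinate. This gives
\[
\mu(x)\m^{\sigma\sharp}v=\bigwedge\{\mu(x)\m^{\sigma\sharp}\mu(w)\mid w\in X,\ v\leq\mu(w)\},
\]
and by the definition of meet, $u\nleq\mu(x)\m^{\sigma\sharp}v$ holds precisely when $u\nleq\mu(x)\m^{\sigma\sharp}\mu(w)$ for some such $w$. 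This is exactly the right-hand condition, so the identity holds and $\eta\colon(A^\delta,\m^\sigma)\to(C,f_+)$ is an isomorphism. The statement for $\hat{(\cdot)}$ then follows by restriction: for $a,b\in A$ one has $a\m_A b=a\m^\sigma b$, $\eta(a)=\hat a$, and $f_+(\hat a,\hat b)=\hat a\m_B\hat b$ by the definition of $\m_B$, whence $\widehat{a\m_A b}=\hat a\m_B\hat b$; combined with the classical Priestley lattice isomorphism this makes $\hat{(\cdot)}$ a $\m$-algebra isomorphism. Uniqueness of the extension is immediate from density of $A$ in $A^\delta$: any lattice isomorphism $A^\delta\to C$ agreeing with $\hat{(\cdot)}$ on $A$ must agree with $\eta$ on all joins of meets of elements of $A$, which exhaust $A^\delta$.

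I expect the only real obstacle to be the bookkeeping in the second paragraph, namely faithfully translating the existential definition of $f_+$ through $\eta$: isolating the redundancy of the domain clause and collapsing the whole condition to the single inequality $u\nleq\mu(x)\m^{\sigma\sharp}v$ via meet-density. Once the characterization $\eta(u)=\{x\mid u\nleq\mu(x)\}$ is in place, the residuation manipulations are routine and the rest of the argument is formal.
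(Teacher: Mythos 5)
Your proof is correct, and for the ``moreover'' clause it takes a genuinely different route from the paper's. For the first claim your chain of equivalences is essentially the one the paper uses: the paper writes $b=\bigwedge\{\mu(w)\mid b\leq\mu(w)\}$, distributes $\m_A^\sigma$ over the meet and uses meet-primeness of $\mu(x)$ before residuating, whereas you residuate first and then use that $\m^{\sigma\sharp}$ preserves arbitrary meets in its second coordinate; these manipulations are interchangeable. The real divergence is in the ``moreover'' part. The paper first invokes the general preservation theorems for canonical extensions of injective and surjective homomorphisms to obtain an isomorphism $(A^\delta,\m_A^\sigma)\cong(C,\m_B^\sigma)$, and must then separately prove $f_+=\m_B^\sigma$, which it does by reducing to completely join- and meet-irreducibles and running a compactness argument using the continuity of the translations $L_z$. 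You instead observe that the pointwise computation proving $\widehat{a\m_A b}=\hat a\m_B\hat b$ is valid for arbitrary $u,v\in A^\delta$, not just for $a,b\in A$, so the single identity $\eta(u\m_A^\sigma v)=f_+(\eta(u),\eta(v))$ does all the work at once, with no topology beyond what is already packaged into the definition of $+$. This is a legitimate streamlining: the only delicate points --- that $\m^{\sigma\sharp}$ is totally defined on $A^\delta\times A^\delta$ and preserves arbitrary meets in its second coordinate, and that the clause $(x,w)\in\dom(+)$ is subsumed by $u\nleq\mu(x)\m^{\sigma\sharp}\mu(w)$ --- are exactly the ones you flag, and both are facts the paper itself relies on elsewhere (e.g.\ in the proof of Proposition~\ref{prop:starminplus}). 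What the paper's longer route buys is the extra information that $f_+$ coincides with the $\sigma$-extension of $\m_B$ computed intrinsically on the space side; in your argument that fact becomes a corollary rather than an intermediate step.
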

\begin{proof}
For the first statement, we need to prove that $\hat{a \m_A b} = \hat{a} \m_B \hat{b}$, for any $a, b \in A$. Indeed, for any $x \in X$, we have:
\begin{align*}
x \in \hat{a \m_A b} &\iff a \m_A^\sigma b \nleq \mu(x) \\
			          &\iff a \m_A^\sigma \bigwedge_{b\leq\mu(w)}\mu(w) \nleq \mu(x) \\
			          &\iff \bigvee_{b\leq\mu(w)} a \m_A ^\sigma\mu(w) \nleq \mu(x) \\
			          &\iff \exists w \in X \text{ such that } b\leq\mu(w) \text{ and }a\m_A \mu(w)\nleq \mu(x) \\
			          &\iff \exists w \in X \text{ such that } b\leq\mu(w) \text{ and }a\nleq \mu(x)\m_A^{\sigma\sharp} \mu(w)\\
			          &\iff \exists  w \in X \text{ such that } b\leq\mu(w), \mu(x)\m_A^{\sigma\sharp}\mu(w)\neq 1, \text{ and }a\nleq \mu(x)\m_A^{\sigma\sharp} \mu(w)\\
			          &\iff \exists w \in X \text{ with } w \not\in \hat{b}, (x,w) \in \dom(+), \text{ and } x + w \in \hat{a} \\
			          &\iff x \in f_+(\hat{a},\hat{b}) = \hat{a} \m_B \hat{b}.
\end{align*}
For the `moreover' part, we first recall \cite[Thm.~3.7]{GeJo2004} that the (smooth) extension of any surjective homomorphism between distributive lattice expansions is a homomorphism between the canonical extensions. Since extensions also preserve injectivity \cite[Thm. 3.2]{GeJo2004}, the (smooth) extension of the isomorphism $\hat{(\cdot)}$ is an isomorphism between $(A^\delta, \m_A^\sigma)$ and $(C, \m_B^\sigma)$.
Since  $\varphi(a\m_B b) = f_+|_{B \times B}(a,b)$, it thus remains to prove that $f_+$ is the $\sigma$-extension of $\m_B$.
Note that completely join-irreducibles (respectively, completely meet-irreducibles) in $C$ are of the form $\downset x$ (respectively, $(\upset x)^\mathsf{c}$), where $x\in X$. Thus for each $x\in X$ we have $\varphi(\mu(x))=(\upset x)^\mathsf{c}$ and $\varphi(\nu(x))=\downset x$. Since both $\m_B^\sigma$ and $f_+$ are complete $(1,\mathsf{op})$-operators, it therefore suffices to prove that, for any $x,y \in X$, $\varphi(\nu(x) \m_B^\sigma \mu(y)) = f_+(\downset x,(\upset y)^\mathsf{c})$. Let $x, y \in X$. Recall that, by definition of $\varphi$,
\begin{align*}
\varphi(\nu(x) \m_B^\sigma \mu(y)) &= \bigwedge \{a \m_B b \ | \ \varphi(\nu(x)) \subseteq a, b \subseteq \varphi(\mu(y)), a, b \in B\} \\
&= \bigwedge \{f_+(a,b) \ | \ \varphi(\nu(x)) \subseteq a, b \subseteq \varphi(\mu(y)), a, b \in B\}.
\end{align*}
It is immediate from this equality that $f_+(\downset x, (\upset y)^\mathsf{c}) \subseteq \varphi(\nu(x) \m_B^\sigma \mu(y))$. For the other inequality, suppose that $z \not\in f_+(\downset x, (\upset y)^\mathsf{c})$. Denote by $L_{+,z}=L_z \colon X \rightharpoonup X$ the partial function $w\mapsto z + w$ as usual. It follows from the continuity of $+$ that $L_z$ is continuous. Also, by definition of $f_+$, $z \not\in f_+(\downset x, (\upset y)^\mathsf{c})$ means that $L_z^{-1}({\downarrow}x)$ is disjoint from ${\uparrow} y$. Since $L_z^{-1}({\downarrow}x)$ is the filtered intersection of the collection of clopen down-sets $\{L_z^{-1}(a) \ | \ x \in a, a \in B\}$ and $\upset y$ is the filtered intersection of the collection of clopen up-sets $\{ X \setminus b \ | \ y \not\in b, b \in B\}$, by compactness of $X$ there exist $a, b \in B$ such that $x \in a$, $y \not\in b$ and $L_z^{-1}(a)$ is disjoint from $X \setminus b$. By definition of $f_+$, this means that $z \not\in f_+(a,b)$, so $z \not\in \varphi(\nu(x) \m_B^\sigma \mu(y))$, as required.
\end{proof}

\subsection{Morphisms}\label{sec:morphisms}
The above shows that any \m-algebra can be represented as the dual algebra of an \m-space. This representation theorem can be extended to a duality theorem, as we will do now.
Before we state and prove the duality theorem, we need to define the appropriate notion of morphism between \m-spaces. The correct notion is that of `bounded morphism for $+$'. Indeed, readers familiar with duality for modal and/or Heyting algebras will recognize the `forth' and `back' conditions in, respectively, items (2) and (3) of the following definition.
\begin{dfn}\label{dfn:morphisms}
A \emph{morphism} from a \m-space $(X_1,i_1,+_1,\star_1)$ to a \m-space $(X_2,i_2,+_2,\star_2)$ is a continuous order-preserving function $f \colon X_1 \to X_2$ such that
\begin{enumerate}
\item for all $x \in X_1$, $f(i_1(x)) = i_2(f(x))$,
\item for all $x,y \in X_1$, if $(x,y) \in \dom(+_1)$, then $f(x) +_2 f(y) \leq f(x +_1 y)$,
\item for all $x \in X_1$ and $z \in X_2$, if $(f(x),z) \in \dom(+_2)$, then there exists $w' \in X_1$ such that $(x,w') \in \dom(+_1)$, $z \leq f(w')$, and $f(x +_1 w') = f(x) +_2 z$.
\end{enumerate}
\end{dfn}

\begin{rem}
In the special case of Definition~\ref{dfn:morphisms} where the Priestley space $X_1$ is a closed subspace of the Priestley space $X_2$, and the order on $X_1$ is the restriction of the order on $X_2$, we have that the inclusion map $\iota \colon X_1 \hookrightarrow X_2$ is a morphism of \m-spaces if, and only if, the following two properties hold:
\begin{enumerate}
\item for all $x \in X_1$, $i_1(x) = i_2(x)$;
\item for all $(x,y) \in \dom(+_1)$, $x +_1 y = x +_2 y$.
\end{enumerate}
\end{rem}
\begin{lem}\label{lem:morphisms}
Let $(X_1,i_1,+_1,\star_1)$ and $(X_2,i_2,+_2,\star_2)$ be \m-spaces with dual \m-algebras $(A_1, \m_1)$ and $(A_2,\m_2)$, respectively. Let $f \colon X_1 \to X_2$ be a continuous order-preserving function with dual lattice homomorphism $h = f^{-1} \colon A_2 \to A_1$. The following are equivalent:
\begin{enumerate}
\item $f$ is a morphism of \m-spaces;
\item $h$ is a homomorphism of \m-algebras.
\end{enumerate}
\end{lem}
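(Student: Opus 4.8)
The plan is to prove the two implications separately, observing at the outset that since $f$ is continuous and order-preserving, its dual $h=f^{-1}$ is automatically a bounded lattice homomorphism by Priestley duality; the whole content of the lemma is therefore the equivalence between $h$ preserving $\m$ (that is, $h(a\m_2 b)=h(a)\m_1 h(b)$ for all $a,b\in A_2$) and $f$ satisfying the three conditions of Definition~\ref{dfn:morphisms}. Throughout I would lean on the standard dictionary that $f$ is dual to $h$ exactly when $I^{(2)}_{f(p)}=h^{-1}(I^{(1)}_p)$ and $F^{(2)}_{f(p)}=h^{-1}(F^{(1)}_p)$ for all $p\in X_1$, on the order description $p\leq q\iff I_p\subseteq I_q\iff F_p\supseteq F_q$, on the prime-filter/ideal formulas for $+$ in Lemma~\ref{lem:plusstardef}, and on the fact that $\m_j=f_+^{(j)}$ on clopen down-sets (Proposition~\ref{prop:spacetoalgebra}.7).

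For the implication (1)$\Rightarrow$(2) (morphism yields homomorphism), I would prove $h(a\m_2 b)=h(a)\m_1 h(b)$ by the two inclusions, reading $h=f^{-1}$. Membership $x\in h(a)\m_1 h(b)$ produces $w\in X_1$ with $f(w)\notin b$, $(x,w)\in\dom(+_1)$ and $f(x+_1 w)\in a$; taking $w'=f(w)$ and invoking the forth condition (with condition~(1) ensuring $(f(x),f(w))\in\dom(+_2)$) gives $f(x)+_2 f(w)\leq f(x+_1 w)\in a$, so $f(x)\in a\m_2 b$ since $a$ is a down-set. Conversely, $x\in h(a\m_2 b)$ produces $z\notin b$ with $(f(x),z)\in\dom(+_2)$ and $f(x)+_2 z\in a$; the back condition yields $w'$ with $(x,w')\in\dom(+_1)$, $z\leq f(w')$ and $f(x+_1 w')=f(x)+_2 z\in a$, and $z\leq f(w')$ with $z\notin b$ forces $f(w')\notin b$ ($b$ being a down-set), giving $x\in h(a)\m_1 h(b)$. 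This direction is routine bookkeeping.

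For the converse (2)$\Rightarrow$(1) I would recover the three conditions in turn. Condition~(1) holds because $h$ preserves $\neg$ (as $\neg a = 1\m a$ and $h$ preserves $1$ and $\m$): a short computation with the prime-ideal dictionary and the identity $I_{i(x)}=\{a\mid\neg a\in F_x\}$ shows $I^{(2)}_{f(i_1(x))}=I^{(2)}_{i_2(f(x))}$, hence $f\circ i_1 = i_2\circ f$. The forth condition~(2) is then equally clean: using condition~(1) to obtain $(f(x),f(y))\in\dom(+_2)$ and translating $f(x)+_2 f(y)\leq f(x+_1 y)$ into the inclusion $I^{(2)}_{f(x)+_2 f(y)}\subseteq I^{(2)}_{f(x+_1 y)}$, one verifies it directly from the formula for $I_{p+q}$ in Lemma~\ref{lem:plusstardef}, pushing the witnessing element of $A_2$ through $h$ and applying $h(a\m_2 b)=h(a)\m_1 h(b)$.

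The genuine obstacle is the back condition~(3). Given $x\in X_1$ and $z\in X_2$ with $(f(x),z)\in\dom(+_2)$, I must manufacture $w'\in X_1$ with $w'\leq i_1(x)$, $z\leq f(w')$ and $f(x+_1 w')=f(x)+_2 z$. I would first note that one inequality is free: once $w'\leq i_1(x)$ and $z\leq f(w')$ are arranged, the already-established forth condition together with order-preservation of $+_2$ gives $f(x)+_2 z\leq f(x)+_2 f(w')\leq f(x+_1 w')$, so it remains only to secure $f(x+_1 w')\leq f(x)+_2 z$, i.e.\ to keep $w'$ small enough. I would then translate the three desired properties into conditions on the prime filter $F_{w'}$ of $A_1$ — namely $F_{w'}\supseteq F^{(1)}_{i_1(x)}$, $F_{w'}\cap h[I^{(2)}_z]=\emptyset$, and control of the ideal $I_{w'}$ governing $f(x+_1 w')$ — and obtain $F_{w'}$ by a Zorn's-lemma/separation argument. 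Verifying both that the relevant family of prime filters is nonempty and that the resulting $w'$ forces $f(x+_1 w')\leq f(x)+_2 z$ is where the real work lies; here the hypothesis $h(a\m_2 b)=h(a)\m_1 h(b)$ must be leveraged, together with the order-reversal of $\m$ in its second coordinate and, as needed, the totally-ordered-image property and the upper adjoint $L_x^\sharp$ of Proposition~\ref{prop:translation}. This existence-of-a-prime-filter step is the analogue of the ``back''/zig-zag construction in modal and Heyting duality, and corresponds to the relational property $(R^\epsilon_{\mathrm{dop}})$ of \cite{GePr07b} recorded in Remark~\ref{rem:dualproperties}.
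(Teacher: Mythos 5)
Your direction (1)$\Rightarrow$(2) and your treatment of conditions (1) and (2) of Definition~\ref{dfn:morphisms} in the converse are correct and amount to routine unwinding of the set-theoretic definition of $f_+$. The problem is that the one genuinely nontrivial part of the lemma --- producing the witness $w'$ for the back condition (3) from the mere algebraic hypothesis $h(a\m_2 b)=h(a)\m_1 h(b)$ --- is exactly the part you leave as a plan rather than a proof. You correctly reduce it to finding a prime filter $F_{w'}$ satisfying three simultaneous constraints ($F_{w'}\supseteq F_{i_1(x)}$, $F_{w'}\cap h[I_z]=\emptyset$, and enough control of $I_{w'}$ to force $f(x+_1w')\leq f(x)+_2 z$), but you then write that verifying nonemptiness of the relevant family and the last inequality ``is where the real work lies'' and stop. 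That is the whole content of the implication: the first two constraints pull $w'$ down and the third pushes it up, and showing they are jointly satisfiable by a \emph{prime} filter is precisely the compactness/irreducibility argument that cannot be waved at. As it stands the proposal does not contain a proof of (2)$\Rightarrow$(1).

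It is worth contrasting this with how the paper avoids the ad hoc construction entirely. The paper's proof is a single chain of equivalences: by canonicity of the monotone equation $h(a\m b)=h(a)\m h(b)$, condition (2) lifts to the canonical extensions; since both sides of the lifted equation preserve non-empty joins in the first coordinate and send non-empty meets to joins in the second, the equation need only be checked at completely join-irreducibles $\nu(y)$ and completely meet-irreducibles $\mu(z)$; via the isomorphism of Proposition~\ref{prop:doubledual} this becomes the set equality $f_{+_1}(f^{-1}({\downarrow}y),f^{-1}(({\uparrow}z)^{\mathsf{c}}))=f^{-1}(f_{+_2}({\downarrow}y,({\uparrow}z)^{\mathsf{c}}))$, whose element-wise reading already \emph{contains} the existential witness $w'$; a final first-order manipulation (instantiating $y:=f(x)+_2 z$, $w:=z$, and using that $\dom(+_2)$ is a down-set and $+_2$ is order-preserving) turns this into the forth and back conditions. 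In other words, the prime-filter existence you would need to establish by hand has already been paid for once, in the compactness argument inside Proposition~\ref{prop:doubledual} and in Lemma~\ref{lem:Rdop}/Proposition~\ref{prop:translation}; the lemma itself then requires no new topological input. If you want to complete your route, the cleanest fix is to replace your Zorn's-lemma sketch by an appeal to exactly that chain: apply $h^\delta$ and the identification $\m_i^{\sigma}=f_{+_i}$ at the pair $(\nu(f(x)+_2 z),\,\mu(z))$ rather than trying to build $F_{w'}$ from scratch.
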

\begin{proof}
Let $h^\delta \colon A_2^\delta \to A_1^\delta$ be the extension of $h$ to a complete homomorphism between canonical extensions; writing $\varphi_i$ for the isomorphism $A_i^\delta \cong \mathcal{D}(X_i)$, we have that $h^\delta = \varphi_1^{-1} \circ f^{-1} \circ \varphi_2$. Because $\m$ is a monotone operation, the equation $h(a \m b) = h(a) \m h(b)$ is canonical \cite[Lemma 3.24]{GeJo2004}, so that item (2) is equivalent to
\begin{enumerate}
\item[3.] for all $u, v \in A_1^\delta$,  $h^\delta(u) \m_1^\sigma h^\delta(v) = h^\delta(u \m_2^\sigma v)$,
\end{enumerate}
Note that, if $u = 0$ or $v = 1$, then $h^\delta(u) \m_1^\sigma h^\delta(v) = 0 = h(0) = h^\delta(u \m_2^\sigma v) = h(0)$ always holds. Since the functions $(u,v) \mapsto h(u \m^\sigma v)$ and $(u,v) \mapsto h(u) \m^\sigma h(v)$ preserve non-empty joins in the first coordinate and send non-empty meets to joins in the second coordinate, (3) is equivalent to
\begin{enumerate}
\item[4.] for all $y,z \in X$, $h^\delta(\nu(y)) \m_1^\sigma h^\delta(\mu(z)) = h^\delta(\nu(y) \m_2^\sigma \mu(z))$.
\end{enumerate}
Using the isomorphism from Proposition~\ref{prop:doubledual} and the isomorphisms $\varphi_i$, (4) is equivalent to
\begin{enumerate}
\item[5.] for all $y, z \in X$, $f_{+_1}(f^{-1}({\downarrow}y), f^{-1}(({\uparrow}z)^c)) = f^{-1}(f_{+_2}({\downarrow}y,({\uparrow}z)^c))$.
\end{enumerate}
Using the definitions of $f^{-1}$ and $f_{+_i}$, (5) is equivalent to
\begin{enumerate}
\item[6.] for all $x,y,z \in X$, there exists $w' \in f^{-1}({\uparrow}z)$ such that $(x,w') \in \dom(+_1)$ and $f(x +_1 w') \leq y$, if, and only if, there exists $w \in {\uparrow}z$ such that $(f(x),w) \in \dom(+_2)$ and $f(x) +_2 w \leq y$.
\end{enumerate}

By first-order logic, (6) is equivalent to the conjunction of
\begin{enumerate}
\item[7a.] for all $x,y,z,w' \in X$, if $z \leq f(w')$, $(x,w') \in \dom(+_1)$ and $f(x +_1 w') \leq y$, then there exists $w \in X$ such that $z \leq w$, $(f(x),w) \in \dom(+_2)$ and $f(x) +_2 w \leq y$, and
\item[7b.] for all $x,y,z,w \in X$, if $z \leq w$, $(f(x),w) \in \dom(+_2)$ and $f(x) +_2 w \leq y$, then there exists $w' \in X$ such that $z \leq f(w')$, $(x,w') \in \dom(+_1)$ and $f(x +_1 w') \leq y$,
\end{enumerate}
We now claim that (7a) and (7b) are, respectively, equivalent to
\begin{enumerate}
\item[8a.] for all $x,w' \in X$, if $(x,w') \in \dom(+_1)$, then $(f(x),f(w')) \in \dom(+_2)$ and $f(x) +_2 f(w') \leq f(x +_1 w')$, and
\item[8b.] for all $x,z \in X$, if $(f(x),z) \in \dom(+_2)$, then there exists $w' \in X$ such that $z \leq f(w')$, $(x,w') \in \dom(+_1)$ and $f(x +_1 w') \leq f(x) +_2 z$.
\end{enumerate}
Indeed, (8a) is the special case of (7a) where we put $y := f(x +_1 w')$ and $z := f(w')$: since $\dom(+_2)$ is a down-set and $+_2$ is order-preserving, the existence of $w$  in (7a) implies in particular that $(f(x),f(w')) \in \dom(+_2)$ and $f(x) +_2 f(w') \leq f(x) +_2 w \leq y = f(x +_1 w')$. Also, (8b) is the special case of (7b) where we put $w := z$ and $y := f(x) +_2 z$. Conversely, if (8a) holds, and $x,y,z,w'$ are as in the premise of (7a), set $w := f(w')$, then $(f(x),f(w')) \in \dom(+_2)$ by (8a), and $f(x) +_2 w = f(x) +_2 f(w') \leq f(x +_1 w') \leq y$, so this $w$ satisfies the conclusion of (7a). If (8b) holds, and $x,y,z,w$ are as in the premise of (7b), then $(f(x),z) \in \dom(+_2)$ since $\dom(+_2)$ is a down-set, so we can pick $w' \in X$ as in (8b). Then $f(x +_1 w') \leq f(x) +_2 w \leq y$, so the same $w'$ satisfies the conclusion of (7b).

Applying (8a) to $w' := i_1(x)$ gives that $(f(x),f(i_1(x))) \in \dom(+_2)$, i.e., $f(i_1(x)) \leq i_2(f(x))$. Applying (8b) to $z := i_2(f(x))$ gives $w' \in X$ with $z \leq f(w')$ and $w' \leq i_1(x)$, so $z \leq f(i_1(x))$, i.e., $i_2(f(x)) \leq f(i_1(x))$. Thus, (8a) and (8b) together imply condition (1) in Definition~\ref{dfn:morphisms}. Note moreover that in the presence of $f(i_1(x)) = i_2(f(x))$, (8a) is equivalent to condition (2) in Definition~\ref{dfn:morphisms}: if $(x,w') \in \dom(+_1)$, then $f(w') \leq f(i_1(x)) = i_2(f(x))$, since $f$ is order preserving, so $(f(x),f(w')) \in \dom(+_2)$. Also note that, in the presence of (8a), the element $w'$ which exists according to (8b) actually satisfies $f(x +_1 w') \leq f(x) +_2 z \leq f(x) +_2 f(w') \leq f(x +_1 w')$, so equality holds throughout. We thus conclude that (2), which is equivalent to the conjunction of (8a) and (8b), is equivalent to $f$ being a morphism of $\m$-spaces as defined in Definition~\ref{dfn:morphisms}.
\end{proof}

Combining Proposition~\ref{prop:spacetoalgebra}, Proposition~\ref{prop:doubledual} and Lemma~\ref{lem:morphisms} show that the assignment which sends a \m-space to its dual \m-algebra and a morphism of \m-spaces to the inverse image homomorphism between the dual \m-algebras is a well-defined, full, faithful and essentially surjective functor. We have therefore proved the following theorem.

\begin{thm}\label{thm:mduality}
The category of \m-algebras and homomorphisms is dually equivalent to the category of \m-spaces and morphisms.
\end{thm}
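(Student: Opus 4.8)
The plan is to exhibit a single contravariant functor $G$ from the category of \m-spaces to the category of \m-algebras and to show that it is full, faithful, and essentially surjective; since a contravariant functor with these three properties is automatically a dual equivalence, this suffices. On objects, $G$ sends a \m-space $(X,i,+,\star)$ to its dual \m-algebra $(B,\m_B)$ as constructed in Proposition~\ref{prop:spacetoalgebra}; on morphisms, $G$ sends a morphism $f\colon X_1\to X_2$ of \m-spaces to the inverse-image map $f^{-1}\colon A_2\to A_1$ between the dual \m-algebras. Throughout, $G$ is built on top of the contravariant functor $\mathcal{D}$ underlying ordinary Priestley duality, which sends a Priestley space to its lattice of clopen down-sets and a continuous order-preserving map to its inverse image, and which is already a dual equivalence between Priestley spaces and bounded distributive lattices.

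First I would check that $G$ is well-defined, functorial, faithful, and full. Well-definedness on objects is precisely Proposition~\ref{prop:spacetoalgebra}(7), which guarantees that $(B,\m_B)$ is a \m-algebra; well-definedness on morphisms is the implication $(1)\Rightarrow(2)$ of Lemma~\ref{lem:morphisms}. Functoriality (preservation of identities and composition) is inherited from $\mathcal{D}$, since $G$ agrees with $\mathcal{D}$ on underlying spaces and on the inverse-image assignment, and $(g\circ f)^{-1}=f^{-1}\circ g^{-1}$. Faithfulness is immediate: a \m-space morphism is in particular a continuous order-preserving map and $\mathcal{D}$ is faithful, so distinct \m-space morphisms have distinct inverse-image homomorphisms. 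For fullness, given any \m-algebra homomorphism $h\colon A_2\to A_1$, fullness of $\mathcal{D}$ yields a unique continuous order-preserving $f\colon X_1\to X_2$ with $f^{-1}=h$, and the implication $(2)\Rightarrow(1)$ of Lemma~\ref{lem:morphisms} upgrades $f$ to a morphism of \m-spaces; hence $G(f)=h$.

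Finally I would verify essential surjectivity. Given a \m-algebra $(A,\m_A)$, form its extended dual space $(X,i,+,\star)$ of Definition~\ref{dfn:plusstar}; this is genuinely a \m-space, since the clauses (1)--(7) of Definition~\ref{def:mspace} are exactly the conclusions assembled in Propositions~\ref{prop:dualproperties}, \ref{prop:dualproperties2}, \ref{prop:translation}, and \ref{prop:starminplus}. Applying $G$ returns its dual \m-algebra, and Proposition~\ref{prop:doubledual} supplies an isomorphism $(A,\m_A)\isom G(X,i,+,\star)$ via the Stone--Priestley map. Hence every \m-algebra lies in the essential image of $G$, and combining the four properties yields the asserted dual equivalence.

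The genuine difficulties all reside in the cited results rather than in the final assembly: the delicate points are Proposition~\ref{prop:spacetoalgebra}(6)--(7) (clopenness of $f_+(a,b)$ and the coincidence of $f_+$ and $f_\star$ on nonzero clopens), the ``moreover'' part of Proposition~\ref{prop:doubledual} identifying $f_+$ with the $\sigma$-extension of $\m_B$, and the first-order bookkeeping in Lemma~\ref{lem:morphisms} that translates between the back-and-forth conditions and the morphism conditions of Definition~\ref{dfn:morphisms}. Once these are in hand, the only remaining subtlety at the level of the theorem is to confirm that the extended dual space of a \m-algebra really satisfies \emph{every} clause of Definition~\ref{def:mspace}, i.e., that no axiom of \m-spaces has been left unverified by the preceding propositions.
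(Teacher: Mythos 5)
Your proposal is correct and follows essentially the same route as the paper, whose proof is precisely the observation that Proposition~\ref{prop:spacetoalgebra}, Proposition~\ref{prop:doubledual}, and Lemma~\ref{lem:morphisms} together show that the assignment sending a \m-space to its dual \m-algebra and a morphism to the inverse-image homomorphism is a well-defined, full, faithful, and essentially surjective (contravariant) functor. Your write-up merely makes explicit some bookkeeping the paper leaves implicit, such as functoriality being inherited from ordinary Priestley duality and the verification that the extended dual space of a \m-algebra satisfies all the clauses of Definition~\ref{def:mspace}.
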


\section{MV-algebras} \label{sec:mv}
The aim of this section is to specialize the duality for $\m$-algebras obtained in the previous section to the full subcategory of \m-algebras that is isomorphic to the category of MV-algebras. This will in particular yield a duality between MV-algebras and certain \m-spaces satisfying some additional first-order conditions. Background references for MV-algebras are \cite{cdm,Mun2011}.

\subsection{MV-algebras as \m-algebras}
An \emph{MV-algebra} is an algebra $(A,\oplus,\neg,0)$ of type $(2,1,0)$ satisfying the following conditions, where $1$ abbreviates $\neg 0$:
\begin{enumerate}
\item $(A,\oplus,0)$ is a commutative monoid,
\item $\neg\neg a = a$ for all $a\in A$,
\item $a\oplus 1 = 1$ for all $a\in A$, and
\item $\neg(\neg a \oplus b)\oplus b = \neg(\neg b \oplus a)\oplus a$ for all $a,b\in A$.
\end{enumerate}
In fact, although none of our results here rely on this fact, we note that it already follows from the remaining axioms that $\oplus$ is commutative \cite{Kol2013}. Because it is often given as the sixth item in a commonly-adopted equational basis for MV-algebras, (4) is frequently called (MV6) in the literature (see, e.g., the influential monograph \cite{cdm}).

Many of the characteristic properties of MV-algebras derive from (MV6), including the fact that the term $a\vee b := \neg(\neg a \oplus b)\oplus b$ defines the join operation of a lattice for any MV-algebra. This lattice has least element $0$ and greatest element $1$, and its meet operation is definable via the De Morgan dual $a\wedge b := \neg (\neg a\vee\neg b)$. With respect to order of this lattice, $\oplus$ has a right co-residual $\m$ that is definable via the term $a\m b := \neg (\neg a \oplus b)$.

It follows that to any MV-algebra $\mathbf{A} = (A, \oplus, \neg, 0)$, one may associate an algebra $\mathbf{A}^\m = (A,\vee,\wedge,0,1,\m)$, where the operations of $\mathbf{A}^\m$ are obtained in the manner just described. Note that $\mathbf{A}^\m$ is a $\m$-algebra in the sense of Definition~\ref{dfn:malgebras}. Conversely, if $\mathbf{A} = (A,\vee,\wedge,0,1,\m)$ is a $\m$-algebra, then one may define an algebra $\mathbf{A}^\oplus = (A, \oplus,\neg, 0)$ in the signature of MV-algebras by putting $\neg a := 1 \m a$ and $a \oplus b := \neg(\neg a \m b)$. The algebra $\mathbf{A}^\oplus$ need not be an MV-algebra in general; we now characterize those $\m$-algebras for which it is.

\begin{prop}\label{prop:newmvminus}
Let $\mathbf{A} = (A, \vee,\wedge,0,1,\m)$ be a $\m$-algebra. The following are equivalent:
\begin{enumerate}
\item The algebra $\mathbf{A}^\oplus$ is an MV-algebra;
\item For all $a,b,c \in A$,
\begin{enumerate}
\item[(i)] $(a\m b)\m c = a\m \neg (\neg b\m c)$,
\item[(ii)] $\neg a\m b = \neg b\m a$, and
\item[(iii)] $a \wedge b = a \m (a \m b)$.
\end{enumerate}
\end{enumerate}
Consequently, if the equivalent conditions (1) and (2) are satisfied, then $\oplus$ is an associative and commutative binary operation, $\m$ is the right co-residual of $\oplus$, and $\neg$ is an involution.
\end{prop}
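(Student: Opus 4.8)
The plan is to prove the two implications separately, treating $(2)\Rightarrow(1)$ as the substantial direction and $(1)\Rightarrow(2)$ as a translation of standard MV-algebra identities. For $(2)\Rightarrow(1)$ I first extract two structural facts. Setting the first argument of (iii) equal to $1$ gives $b = 1\wedge b = 1\m(1\m b) = \neg\neg b$, so $\neg$ is an involution; and instantiating (ii) at $\neg a, b$ and using involution yields the ``contraposition'' identity $a\m b = \neg b\m\neg a$, which I will call $(*)$. With involution in hand the monoid axiom (MV1) is immediate: commutativity $a\oplus b = \neg(\neg a\m b) = \neg(\neg b\m a) = b\oplus a$ is exactly (ii); the unit law $a\oplus 0 = \neg(\neg a\m 0) = \neg\neg a = a$ uses $a\m 0 = a$; and associativity reduces, via involution and the definition of $\oplus$, to $(\neg a\m b)\m c = \neg a\m(b\oplus c)$, which is precisely (i) applied at $\neg a$. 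Axiom (MV3) is then $a\oplus 1 = \neg(\neg a\m 1) = \neg 0 = 1$, using normality of $\m$.

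The main obstacle is (MV6). My plan is to reduce it to the single identity $(a\m b)\oplus b = a\vee b$, i.e.\ that the MV-join term computes the given lattice join. Granting this, (MV6) is immediate: since $\neg(\neg a\oplus b) = a\m b$ (again by involution), the left-hand side of (MV6) is $(a\m b)\oplus b = a\vee b$, the right-hand side is $(b\m a)\oplus a = b\vee a$, and these agree by commutativity of the lattice join. To prove the reduced identity I compute $\neg(a\m b)\m b$ and show it equals $\neg a\wedge\neg b$: applying $(*)$ gives $\neg(a\m b)\m b = \neg b\m(a\m b)$, a second application of $(*)$ rewrites $a\m b$ as $\neg b\m\neg a$, and then (iii) collapses $\neg b\m(\neg b\m\neg a)$ to $\neg b\wedge\neg a$. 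Taking $\neg$ of both sides and using involution together with De Morgan then yields $(a\m b)\oplus b = \neg(\neg a\wedge\neg b) = a\vee b$. I expect this to be the delicate step, precisely because (MV6) is non-canonical; the point is that the interplay of $(*)$ and (iii) makes it a short equational consequence here.

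For $(1)\Rightarrow(2)$ I will observe that if $\mathbf{A}^\oplus$ is an MV-algebra then $\neg$ is an involution, so the derived co-residual $\neg(\neg a\oplus b)$ coincides with the original $\m$; conditions (i)--(iii) then translate into the standard MV-algebra theorems $(a\m b)\m c = a\m(b\oplus c)$, $\neg a\m b = \neg b\m a$, and $a\wedge b = a\m(a\m b)$, which I will verify (or cite) directly. Finally, the ``consequently'' clause is essentially already established: commutativity, associativity and involution were derived above, and for co-residuation I will use the join identity to get $x\le(x\m z)\oplus z\le y\oplus z$ when $x\m z\le y$, and the computation $(y\oplus z)\m z = \neg z\wedge y\le y$ (again via $(*)$ and (iii)) to get $x\m z\le(y\oplus z)\m z\le y$ when $x\le y\oplus z$, so that $x\m z\le y\iff x\le y\oplus z$.
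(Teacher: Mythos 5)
Your proposal is correct and follows essentially the same route as the paper's proof: involution is extracted by setting $a=1$ in (iii), the monoid and (MV3) axioms come from (i), (ii), normality and $a\m 0=a$, and (MV6) is obtained by showing both sides equal $a\vee b$ via the interplay of the contraposition identity and (iii) (plus De Morgan), which is exactly the paper's argument in expanded form. Your explicit verification of the co-residuation in the ``consequently'' clause is a welcome addition that the paper leaves implicit.
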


\begin{proof}
Suppose first that (1) holds, and let $a,b,c\in A$. Note that $\neg\neg a=a$ holds by definition.  For (i), the associativity of $\oplus$ gives $(\neg a \oplus b)\oplus c = \neg a\oplus (b\oplus c)$, and rewriting this using the identities $\neg\neg x = x$ and $x\m y=\neg (\neg x\oplus y)$ yields $(a\m b)\m c = a\m \neg (\neg b\m c)$. For (ii), the commutativity of $\oplus$ gives $\neg (\neg a\m b) = \neg (\neg b\m a)$, whence $\neg a\m b =\neg b\m a$. Finally, for (iii) note that $\neg\neg x = x$, (MV6), the commutativity of $\oplus$,  and the identity $a\vee b = \neg (\neg a\oplus b)\oplus b$ provide that
\begin{align*}
a\wedge b &= \neg[(b\oplus\neg a)\oplus\neg a]\\
&= \neg [\neg a\oplus \neg (\neg a\oplus b)]\\
&= a\m (a\m b)
\end{align*}
as desired.

For the converse, suppose that $\mathbf{A}$ is an $\m$-algebra satisfying the three conditions of (2). Property (i) gives that $(\neg a\m b)\m c = \neg a\m\neg (\neg b\m c)$ for all $a,b,c\in A$. Instantiating $a=1$ in (iii) gives $\neg\neg x=x$ for all $x\in A$, whence we obtain $\neg [\neg\neg (\neg a\m b)\m c] = \neg [\neg a\m \neg (\neg b\m c)]$. Applying the definition of $\oplus$ then yields $(a\oplus b)\oplus c=a\oplus (b\oplus c)$, so $\oplus$ is associative. The commutativity of $\oplus$ follows from (ii). That $0$ is a neutral element for $\oplus$ follows from the fact that $a\m 0 = a$ holds in any $\m$-algebra together with the identity $\neg\neg a = a$. This shows that $(A,\oplus,0)$ is a commutative monoid. The identity $a\oplus 1 = 1$ follows from the fact that $a\m 1=0$ in any $\m$-algebra. To see that $\neg(\neg a \oplus b)\oplus b = \neg(\neg b \oplus a)\oplus a$ holds, note that (iii) together with the commutativity of $\oplus$ and $\neg\neg a = a$ shows that both sides of the equation are equal to $a\vee b$, whence the result follows.
\end{proof}

It follows from the remarks in the paragraph preceding Proposition~\ref{prop:newmvminus}, and from the fact that a function preserves $\ominus$ and $1$ iff it preserves $\oplus$ and $\neg$, that the category of MV-algebras is \emph{isomorphic} to the full subcategory of $\m$-algebras which validate the equivalent conditions in Proposition~\ref{prop:newmvminus}. In the presence of the identities (2)(i), (2)(ii), and $\neg\neg a = a$, the identity (2)(iii) is equivalent to (MV6) for the defined operation $\oplus$. Even when $\m$ is the right co-residual of the defined operation $\oplus$ and $\neg$ is an involution, (2)(i) and (2)(ii) do not suffice to axiomatize MV-algebras without (2)(iii). The following proposition characterizes the duals of this larger class of $\m$-algebras.

\begin{prop}\label{prop:resminusduals}
Let $\mathbf{A} = (A, \vee,\wedge,0,1,\m)$ be a $\m$-algebra and $(X,i,+,\star)$ and its extended dual space. Moreover, for $a,b\in A$ define $a\oplus b = \neg (\neg a\m b)$. The following are equivalent.
\begin{enumerate}
\item For all $a,b,c\in A$,
\begin{enumerate}
\item[(i)] $\neg\neg a = a$.
\item[(ii)] $a\oplus b = b\oplus a$.
\item[(iii)] $(a\oplus b)\oplus c = a\oplus (b\oplus c)$.
\item[(iv)] $\m$ is the right co-residual of $\oplus$.
\end{enumerate}
\item For all $x,y,z\in X$,
\begin{enumerate}
\item[(i)] $i(i(x)) = x$.
\item[(ii)] If $(x,y), (y,x)\in\dom(+)$, then $x + y = y + x$.
\item[(iii)] If $(x,y+x), (y,z), (x+y,z), (x,y)\in\dom(+)$, then $x + (y + z) = (x + y) + z$.
\item[(iv)] If $(i(x),y), (z,y)\in\dom(+)$, then $i(x) + y \leq i(z)$ if and only if $z + y\leq x$.
\end{enumerate}
\end{enumerate}
\end{prop}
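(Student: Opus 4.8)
The plan is to prove the two conjunctions equivalent by translating each algebraic condition on $A$ into its counterpart on $X$, passing through the canonical extension $A^\delta$ (realized as the lattice $C$ of down-sets of $X$, cf.\ Proposition~\ref{prop:doubledual}) and evaluating on the completely meet-irreducible elements $\mu(x)$ and completely join-irreducible elements $\nu(x)$, which are exactly the points of $X$. Two bridges are used throughout. First, $\neg^\delta$ dualizes to $i$, via $\nu(i(x))=\neg^{\delta\sharp}\mu(x)$ and $\mu(i(x))=\neg^{\delta\flat}\nu(x)$; being a complete lattice anti-homomorphism, $\neg^\delta$ is determined by the data that $i$ records on the irreducibles. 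Second, $\m^{\sigma\sharp}$ is the right residual of $\m^\sigma$ and preserves arbitrary meets in each coordinate, and its trace on meet-irreducibles is precisely $+$: by Definition~\ref{dfn:plusstar}, $\mu(x)\m^{\sigma\sharp}\mu(y)=\mu(x+y)$ when $(x,y)\in\dom(+)$ and equals $1$ otherwise. Consequently each first-order condition in (2) on $i$ and $+$ is equivalent to the corresponding (quasi-)identity for $\neg^\delta$ and $\m^{\sigma\sharp}$ on $A^\delta$, the membership hypotheses in (2) recording exactly when the values in question avoid $1$, equivalently remain below the relevant $i(-)$.

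For (i) I would use that $\neg\neg a=a$ holds in $A$ if and only if $\neg^\delta\neg^\delta=\id$ on $A^\delta$, which is immediate from canonicity since $\neg$ is smooth; evaluating on irreducibles and using that $i$ is the trace of $\neg^\delta$ then gives $i(i(x))=x$, which is (2)(i).

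The condition (iv) is the structural interface. Since the biconditional $a\leq b\oplus c\iff a\m c\leq b$ expresses that $\m$ is the right co-residual of $\oplus=\neg(\neg a\m b)$, it is an adjunction relating $\m$ and $\oplus$; dualizing it --- using the involution (i) to transpose $\neg$ across inequalities and the residuation relation $u\m^\sigma w\leq v\iff u\leq v\m^{\sigma\sharp}w$, and then evaluating on the irreducibles $\mu(x),\mu(z)$ --- yields precisely the biconditional (2)(iv). In the presence of (i) and (iv), the partial operation $+$ is thereby identified with the dual trace of $\oplus$, and the remaining conditions transfer cleanly: because $\oplus$ preserves finite meets in each coordinate, its commutativity and associativity are canonical for the $\pi$-extension by Proposition~\ref{prop:canonicity}, hence hold in $(A,\oplus)$ if and only if they hold in $(A^\delta,\oplus^\pi)$, and under this identification they become the commutativity and associativity of $+$, namely (2)(ii) and (2)(iii); the listed domain memberships are exactly the hypotheses guaranteeing that every sum appearing in the identity is defined. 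The converse directions reverse these translations.

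The principal obstacle I anticipate is the interplay between the $\sigma$- and $\pi$-extensions: the operation $+$ is built from $\m^{\sigma\sharp}$, the residual of the lower extension $\m^\sigma$, whereas the defined operation $\oplus$ has as its canonical extension $\oplus^\pi$, co-residuating the upper extension $\m^\pi$, and $\m^\sigma\neq\m^\pi$ in general. These two personalities must be reconciled before the identification underlying the translation of (ii)--(iv) is legitimate; the device for doing so is that $\m^\sigma$ and $\m^\pi$ agree on ideal (and dually filter) elements --- the same fact exploited in the proof of Proposition~\ref{prop:starminplus} --- so that once every expression is evaluated on the elements $\mu(x)$ and $\nu(x)$ the discrepancy vanishes. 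The remaining work is bookkeeping: one must verify that the domain hypotheses in (2)(ii)--(iv) correspond exactly to the instances of the identities in (1) in which no argument is forced to a bound $0$ or $1$, so that the partial operations faithfully encode the total ones.
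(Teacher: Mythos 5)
Your overall strategy --- pass to $A^\delta$, evaluate on the irreducibles $\nu(x)$ and $\mu(x)$, and use canonicity of commutativity and associativity --- is the same as the paper's, and your treatment of condition (i) and of the shape of (iv) matches. The gap is in how you reconcile $\m^{\sigma\sharp}$ (which underlies $+$) with $\oplus^\pi$ (the canonical extension of the \emph{defined} term $a\oplus b=\neg(\neg a\m b)$). You propose to bridge them by the fact that $\m^\sigma$ and $\m^\pi$ agree on ideal elements, but that is the device used in Proposition~\ref{prop:starminplus} and it does not address the actual difficulty here: $\oplus$ is a composite term, and the extension of a composite is not in general the composite of the extensions, so $\oplus^\pi$ is not automatically $\neg^\delta(\neg^\delta(-)\m^{?}(-))$ for either extension of $\m$. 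The paper establishes $\m^{\sigma\sharp}=\oplus^\pi$ by two different arguments in the two directions: in the forward direction it invokes the lifting of co-residuation --- (1)(iv) implies that $\m^\sigma$ is the right co-residual of $\oplus^\pi$, citing \cite[Prop.~5.3]{GGM2014} --- after which canonicity transfers commutativity and associativity to $\oplus^\pi=\m^{\sigma\sharp}$ and the computation $\nu(x)\m^\sigma\mu(y)=\nu(i(i(x)+y))$ yields (2)(iv); in the backward direction the identification has to be \emph{derived} from (2)(i)--(iii), by first promoting commutativity and associativity of the partial operation $+$ on points to commutativity and associativity of $\m^{\sigma\sharp}$ on all of $A^\delta$ (using that $\m^{\sigma\sharp}$ preserves arbitrary meets in each coordinate) and then combining this with the involution to show $\nu(x)\m^\sigma\mu(y)=\neg^\delta(\neg^\delta\nu(x)\m^{\sigma\sharp}\mu(y))$, whence $\m^{\sigma\sharp}=\oplus^\pi$.

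This is why ``the converse directions reverse these translations'' is not enough: until $\m^{\sigma\sharp}=\oplus^\pi$ is in hand you cannot read (2)(ii) and (2)(iii) as statements about $\oplus$ at all, and obtaining that identity is itself the main content of the direction from (2) to (1). Your plan makes the identification only ``in the presence of (i) and (iv)'', i.e., assuming algebraic hypotheses that are precisely what must be proved in that direction. Supply the two missing arguments --- the residuation-lifting result for the forward direction and the commutativity-plus-involution argument for the converse --- and the rest of your outline goes through.
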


\begin{proof}
It is easy to see that for each $x\in X$ and $a\in A$, $a\in I_{i(i(x))}$ if and only if $\neg\neg a\in I_x$. Hence if (1)(i) holds, then (2)(i) follows immediately. Conversely, if (1)(i) fails then there exists $a\in A$ with $a\neq \neg\neg a$, and there is some prime ideal $I_x$ that contains one of $a$ or $\neg\neg a$ but not the other. This implies that (2)(i) fails, giving that (1)(i) and (2)(i) are equivalent.

Now suppose that (1) holds. Then $\m^\sigma$ is the right co-residual of $\oplus^\pi$ (see, e.g., \cite[Prop.~5.3]{GGM2014} for a proof), and the latter is associative and commutative since $\oplus$ is associative and commutative, using Proposition~\ref{prop:canonicity}. Thus, $\m^{\sigma\sharp} = \oplus^\pi$ is associative and commutative, whence (2)(ii) and (2)(iii) follow immediately from the definition of $+$. For (2)(iv), note that:
\begin{align*}
\nu(x)\m^\sigma\mu(y) &= \neg^\delta (\neg^\delta \nu(x)\oplus^\pi\mu(y))\\
&= \neg^\delta (\mu (i(x)) \m^{\sigma\sharp} \mu (y))\\
&= \neg^\delta \mu(i(x) + y)\\
&= \nu (i(i(x) + y))
\end{align*}
Now observe:
\begin{alignat*}{3}
&&\nu(x)\m^\sigma \mu(y)\leq \mu(z) &\iff \nu(x)\leq \mu(z)\m^{\sigma\sharp}\mu(y)\\
\text{iff  }\;\;\;&& \nu(i(i(x)+y))\leq\mu(z) &\iff \nu(x)\leq\mu(z+y)\\
\text{iff  }\;\;\;&& \nu(z)\leq\nu(i(i(x)+y)) &\iff \nu(z+y)\leq \nu(x)\\
\text{iff  }\;\;\;&& z\leq i(i(x)+y) &\iff z+y\leq x\\
\text{iff  }\;\;\;&& i(x)+y\leq i(z) &\iff z+y\leq x.
\end{alignat*}
This gives (2)(iv), showing that (1) implies (2).

For the converse, note that (2)(ii) and (2)(iii) entail that $\m^{\sigma\sharp}$ is associative and commutative. In the presence of (1)(i) (equivalently, (2)(i)) we have that $a\m b = \neg (\neg a\oplus b)$. Thus, for all $x,y\in X$,
$$\nu(x)\m^\sigma\mu(y) = \neg^\delta (\neg^\delta \nu(x)\oplus^\pi\mu(y)).$$
Since $\m^{\sigma\sharp}$ is associative and commutative, then $\neg^\delta$ being an involution implies that for all $x,y\in X$,
$$\nu(x)\m^\sigma\mu(y) = \neg^\delta (\neg^\delta \nu(x)\m^{\sigma\sharp}\mu(y)),$$
whence $\m^{\sigma\sharp}$ and $\oplus^\pi$ coincide. It follows that the restriction of $\oplus^\pi$ to $A$ is associative and commutative, i.e., $\oplus$ is associative and commutative.  The argument of the preceding paragraph then shows that $\m^\sigma$ is the right co-residual of $\oplus^\pi$ by (2)(iv), whence $\m$ is the right co-residual of $\oplus$. It follows that (2)(iv) implies (1)(iv), and hence (2) implies (1).
\end{proof}

\subsection{The dual of (MV6)}\label{subsec:mv6}
From Proposition~\ref{prop:newmvminus}, the algebras whose duals are characterized in Proposition~\ref{prop:resminusduals} comprise a supervariety of MV-algebras, and MV-algebras are exactly the subvariety of algebras satisfying $a \wedge b = a \m (a \m b)$ (which is equivalent to (MV6) in this formulation). The following dualizes the latter identity in this context. Crucially, offering a dual condition in terms of both $+$ and $\star$ allows for the application of Proposition \ref{prop:canonicity} in the proof of the following. The fact that this is possible underscores the benefit of working with the duality we have offered here.

\begin{prop}\label{prop:mv6}
Let $(A,\m)$ be a $\m$-algebra with extended dual space $(X,i,+,\star)$. Assume further that $\neg$ is an involution, that the operation $a \oplus b := \neg(\neg a \m b)$ is a right co-residual of $\m$, and that $\oplus$ is commutative.
The following are equivalent:
\begin{enumerate}
\item for all $a, b \in A$, $a \wedge b = a \m (a \m b)$;
\item for all $x,x',y \in X$, if $(x,y) \in \dom(\star)$, and there exists $w \nleq y$ such that $x' + w \leq x \star y$, then $x' \leq x$.
\end{enumerate}
\end{prop}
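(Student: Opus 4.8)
The plan is to reduce the identity (1) to a single inequality, transfer that inequality to the canonical extension via canonicity, and then read off (2) by evaluating at completely irreducible elements. First I would dispose of one direction for free. Under the standing hypotheses one has $\neg a \oplus c = \neg(a \m c)$, hence $a \m c = \neg(\neg a \oplus c)$, so that $a \m (a \m b) = \neg(\neg a \oplus (a \m b))$. Since $0$ is a unit for $\oplus$ (because $a \oplus 0 = \neg(\neg a \m 0) = \neg\neg a = a$) and $\oplus$ is monotone, we get $\neg a \oplus (a \m b) \ge \neg a$ and therefore $a \m (a \m b) \le \neg\neg a = a$. Moreover, co-residuation applied to $a \m b \le a \m b$ gives $a \le (a \m b) \oplus b$, and a second application gives $a \m (a \m b) \le b$. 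Thus $a \m (a \m b) \le a \wedge b$ holds in every $\m$-algebra satisfying the hypotheses, so (1) is equivalent to the single inequality $a \wedge b \le a \m (a \m b)$ for all $a, b \in A$.

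Next I would pass to $A^\delta$. Both $\wedge$ and $\m$ preserve finite meets, so Proposition~\ref{prop:canonicity}(2) applies: the inequality $a \wedge b \le a \m (a \m b)$ holds in $A$ if and only if $u \wedge v \le u \m^\pi (u \m^\pi v)$ holds for all $u, v \in A^\delta$, the forward direction being canonicity and the backward direction being restriction to $A$ (where $\m^\pi = \m$). The same proposition transfers the reverse inequality unconditionally, so this is exactly the $\pi$-instance of the identity. It is precisely here that working with $\m^\pi$ --- the personality of $\m$ witnessed dually by $\star$ --- is indispensable: the corresponding $\sigma$-instance is \emph{not} available, which is the dual manifestation of the non-canonicity of (MV6), and this is why the dual condition must be allowed to mention $\star$.

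Finally I would convert $u \wedge v \le u \m^\pi(u \m^\pi v)$ into (2) by reducing to irreducibles. The right-hand side is order-preserving in $v$ (two reversals in the second coordinate), so by distributivity on the left I can reduce the universally quantified $v$ to completely join-irreducibles $\nu(y)$. I would then unfold both occurrences of $\m$: the inner term $u \m^\pi \nu(y)$ is evaluated on an ideal/filter pair, where $\m^\pi$ agrees with $\m^\sigma$, and using $\mu(x \star y) = \mathrm{int}(\mu(x) \m^{\sigma\sharp} \nu(y))$ together with $\mu(x)\m^{\sigma\sharp}\nu(y) = \bigwedge\{\mu(x+w) \mid (x,w)\in\dom(+),\ w \nleq y\}$ from the proof of Proposition~\ref{prop:starminplus}, the inner occurrence becomes $x \star y$ together with the existential witness $w \nleq y$, while the outer occurrence, handled through the residual $\m^{\sigma\sharp}$, becomes $x' + w$ with the comparison reading off as $x' \le x$. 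The main obstacle is exactly this unfolding: because $a$ occurs twice in $a \m (a \m b)$ --- once in the order-preserving outer slot and once inside the order-reversing inner slot --- the term $u \m^\pi(u \m^\pi v)$ is not monotone in $u$, so the standard reduction of the universally quantified $u$ to a single class of irreducibles breaks down. The two occurrences of $a$ must be tracked by two a priori distinct points, one emerging from the $\sigma$-residual ($+$) analysis of the outer $\m$ and one from the $\pi$-analysis ($\star$) of the inner $\m$; the clause $x' \le x$ is precisely the coupling between them, and it is expressible only because the dual condition names both $+$ and $\star$.
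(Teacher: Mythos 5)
Your opening reduction is correct and is in substance the paper's first step: under the standing hypotheses $a\m(a\m b)\le a\wedge b$ always holds, so (1) collapses to a single inequality; the paper performs the equivalent reduction to $\neg a\wedge b\le(a\oplus b)\m a$, and the two forms agree under $a\mapsto\neg a$ using $\neg x\m y=\neg y\m x$ (which follows from commutativity of $\oplus$ and involutivity of $\neg$). The passage to $A^\delta$ via Proposition~\ref{prop:canonicity} is likewise the paper's step. The genuine gap is in your final paragraph: the correspondence computation turning the canonical-extension inequality into condition (2) is the entire content of the proposition, and you do not carry it out; worse, the form you commit to, $u\wedge v\le u\m^\pi(u\m^\pi v)$, actively obstructs it. First, the adjunction defining $\m^{\pi\flat}$ (hence $\star$) is only available when the second argument of $\m^\pi$ is a completely join-irreducible $\nu(y)$; in your term the second argument of the outer $\m^\pi$ is the compound $u\m^\pi v$, so neither that adjunction nor a minimal-valuation substitution applies directly, and, as you note yourself, $u$ occurs with mixed polarity on the right, so it cannot be instantiated at a single class of irreducibles. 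You name this obstacle, but your proposed resolution (``two a priori distinct points coupled by $x'\le x$'') is a description of the answer, not an argument. Second, your claim that the inner term $u\m^\pi\nu(y)$ is evaluated where $\m^\pi$ agrees with $\m^\sigma$ is misapplied: that agreement holds on ideal elements of $A^\delta\times(A^\delta)^{\mathrm{op}}$, i.e., when the first coordinate is an ideal element, whereas in your unfolding $u$ is being tracked through join-irreducibles, which are filter elements.

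The paper resolves exactly this difficulty by rewriting the inequality so that a bare variable sits in the second coordinate of $\m$, namely $\neg u\wedge v\le(u\oplus^\pi v)\m^\pi u$. Splitting that occurrence as $u=\bigvee\{\nu(y)\mid\nu(y)\le u\}$ and using that $\m^\pi$ turns non-empty joins into meets in the second coordinate leaves every remaining occurrence of $u$ and $v$ positive in the consequent, so the Sahlqvist minimal valuation $u:=\nu(y)$, $v:=\nu(x)$ can be substituted (after disposing of the degenerate case $u=0$). The adjunction with $\m^{\pi\flat}$ then produces $x\star y$, and the fact that $\oplus^\pi$ preserves arbitrary meets in both coordinates is what introduces the meet-irreducible approximants $\mu(x')\ge\nu(x)$ and $\mu(w)\ge\nu(y)$, yielding $x'+w$, the witness $w\nleq y$, and the coupling $x'\le x$. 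Without this rewriting, or an equivalent device for handling the compound second argument and the mixed polarity of $u$, your step from the $A^\delta$-inequality to (2) does not go through as described.
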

\begin{proof}
Note that (1) may readily be seen to be equivalent to the condition that for all $a, b,\in A$, $\neg a \wedge b \leq (a \oplus b) \m a$. By Proposition \ref{prop:canonicity}, this inequality is equivalent to
\begin{enumerate}
\item[3.] for all $u,v \in A^\delta$, $\neg u \wedge v \leq (u \oplus^\pi v) \m^\pi u$.
\end{enumerate}
Note that, if $u = 0$, then $\neg u \wedge v = v$, and $(u \oplus^\pi v) \m^\pi u = v$, where in the second equality one uses that the equations $a \oplus 0 = a$ and $a \m 0 = a$, which hold in any \m-algebra, is canonical. Thus, (3) is equivalent to:
\begin{enumerate}
\item[4.] for all $u,v \in A^\delta$, if $u \neq 0$ then $\neg u \wedge v \leq (u \oplus^\pi v) \m^\pi u$.
\end{enumerate}
Now, since the completely join-irreducibles join-generate $A^\delta$ and $\m^\pi$ sends non-empty joins to non-empty meets in the second coordinate, (4) is equivalent to:
\begin{enumerate}
\item[5.] for all $u,v \in A^\delta$, $x, y \in X$, if
$\left\{ \begin{array}{c} u \neq 0,\\ \nu(x) \leq \neg u \wedge v, \\ \nu(y) \leq u \end{array}\right.$
then $\nu(x) \leq (u \oplus^\pi v) \m^\pi \nu(y)$.
\end{enumerate}
Note that the minimum values of $u$ and $v$ for which the antecedent of (5) is satisfied, if any, are $u := \nu(y)$ and $v := \nu(x)$. Therefore, since $u$ and $v$ only occur positively in the term $(u \oplus^\pi v) \m^\pi \nu(y)$, (5) is equivalent to:
\begin{enumerate}
\item[6.] for all $x, y \in X$, if $\nu(x) \leq \neg \nu(y)$ then $\nu(x) \leq (\nu(x) \oplus^\pi \nu(y)) \m^\pi \nu(y)$.
\end{enumerate}
(The equivalence of (5) and (6) is the typical ``Sahlqvist'' correspondence argument, well-known in modal logic.) Condition (6) is essentially already a first-order condition on the extended dual space $X$. It remains to show that (6) is equivalent to the simpler condition (2). To this end, first note that (6) is equivalent to:
\begin{enumerate}
\item[7.] for all $x, y \in X$, if
$\left\{ \begin{array}{c} \nu(x) \leq \neg \nu(y), \\ \nu(x) \leq \mu(x'), \\ \nu(y) \leq \mu(w), \end{array}\right.$
then $\nu(x) \m^{\pi\flat} \nu(y) \leq \mu(x') \oplus^\pi \mu(w)$.
\end{enumerate}
Indeed, to see that (6) and (7) are equivalent, consecutively use the adjunction between $\m^{\pi\flat}$ and $\m^\pi$, approximate $\nu(x)$ as the meet of completely meet-irreducibles $\mu(x')$ and $\nu(y)$ as the meet of completely meet-irreducibles $\mu(w)$, and then use the fact that $\oplus^\pi$ preserves arbitrary meets in both coordinates, being the right co-residual of $\m^\sigma$.

Substituting the definitions of the operations $+$ and $\star$ and the defining properties of $\mu$ and $\nu$ in (7), and rearranging using first-order logic, (7) is equivalent to:
\begin{enumerate}
\item[8.] for all $x,x',y \in X$, if $(x,y) \in \dom(\star)$, and there exists $w \nleq y$ such that $x' + w \leq x \star y$, then $x' \leq x$.
\end{enumerate}
The above condition is precisely (2) as required.
\end{proof}

\begin{dfn}
We say that a $\m$-space $(X, i, +, \star)$ is an \emph{MV-space} if, for all $x,y,z\in X$,
\begin{enumerate}
\item[(i)] $i(i(x)) = x$.
\item[(ii)] If $(x,y), (y,x)\in\dom(+)$, then $x + y = y + x$.
\item[(iii)] If $(x,y+x), (y,z), (x+y,z), (x,y)\in\dom(+)$, then $x + (y + z) = (x + y) + z$.
\item[(iv)] If $(i(x),y), (z,y)\in\dom(+)$, then $i(x) + y \leq i(z)$ if and only if $z + y\leq x$.
\item[(v)] If $(x,y) \in \dom(\star)$, and there exists $w \nleq y$ such that $z + w \leq x \star y$, then $z \leq x$.
\end{enumerate}
\end{dfn}
\begin{cor}\label{cor:MValgdual}
The category of MV-algebras with MV-algebra homomorphisms is dually equivalent to the full subcategory of $\m$-spaces consisting of the MV-spaces.
\end{cor}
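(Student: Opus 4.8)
The plan is to derive Corollary~\ref{cor:MValgdual} by restricting the dual equivalence of Theorem~\ref{thm:mduality} to matching full subcategories on each side. By the remark following Proposition~\ref{prop:newmvminus}, the category of MV-algebras is isomorphic to the full subcategory of \m-algebras validating conditions (i), (ii), (iii) of Proposition~\ref{prop:newmvminus}(2), with MV-algebra homomorphisms being exactly the \m-algebra homomorphisms between such algebras. Since, under a dual equivalence, a full subcategory closed under isomorphism corresponds to the full subcategory spanned by its dual objects, it suffices to prove that a \m-algebra $(A,\m)$ satisfies the conditions of Proposition~\ref{prop:newmvminus}(2) if, and only if, its extended dual space $(X,i,+,\star)$ is an MV-space.

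First I would decompose the algebraic side, showing that $(A,\m)$ satisfies Proposition~\ref{prop:newmvminus}(2) precisely when it satisfies the four conditions (1)(i)--(iv) of Proposition~\ref{prop:resminusduals} together with the identity $a\wedge b = a\m(a\m b)$. For the forward implication, if $(A,\m)$ satisfies Proposition~\ref{prop:newmvminus}(2) then $\mathbf{A}^\oplus$ is an MV-algebra, so the concluding clause of Proposition~\ref{prop:newmvminus} yields that $\neg$ is an involution and that $\oplus$ is commutative, associative, and has $\m$ as its right co-residual---these being exactly conditions (1)(i)--(iv)---while condition (iii) of Proposition~\ref{prop:newmvminus}(2) is literally $a\wedge b = a\m(a\m b)$. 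For the converse, (iii) is again the same identity; commutativity of $\oplus$ together with the involution $\neg$ yields $\neg a\m b = \neg b\m a$, which is (ii); and (i) is recovered from associativity of $\oplus$ by reversing the rewriting used in the proof of Proposition~\ref{prop:newmvminus}.

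Next I would apply the two dualization results. Proposition~\ref{prop:resminusduals} shows that conditions (1)(i)--(iv) on $(A,\m)$ are equivalent to conditions (2)(i)--(iv) on $(X,i,+,\star)$, which are precisely axioms (i)--(iv) of the MV-space definition. Moreover, these same algebraic conditions supply exactly the standing hypotheses of Proposition~\ref{prop:mv6}---namely that $\neg$ is an involution, that $\oplus = \neg(\neg a\m b)$ is a right co-residual of $\m$, and that $\oplus$ is commutative---so that proposition applies and translates the remaining identity $a\wedge b = a\m(a\m b)$ into its equivalent dual condition, which is axiom (v) of the MV-space definition. Combining with the decomposition above, $(A,\m)$ satisfies Proposition~\ref{prop:newmvminus}(2) if, and only if, $(X,i,+,\star)$ satisfies axioms (i)--(v), i.e.\ is an MV-space.

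Finally, since the duality of Theorem~\ref{thm:mduality} is essentially surjective, every MV-space arises up to isomorphism as the extended dual space of some \m-algebra, and by the equivalence just established that algebra must validate Proposition~\ref{prop:newmvminus}(2); hence the image of the MV-algebra subcategory under the duality is exactly the full subcategory of MV-spaces, and the restricted functors constitute a dual equivalence. The genuine mathematical work has already been carried out in Propositions~\ref{prop:resminusduals} and~\ref{prop:mv6}, so the main obstacle here is purely the bookkeeping of the second paragraph: verifying that Proposition~\ref{prop:newmvminus}(2) splits cleanly into the hypotheses of Proposition~\ref{prop:resminusduals} plus the identity $a\wedge b = a\m(a\m b)$, and confirming that these in turn furnish exactly the standing assumptions required to invoke Proposition~\ref{prop:mv6}.
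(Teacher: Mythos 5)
Your proposal is correct and follows essentially the same route as the paper, which simply cites Theorem~\ref{thm:mduality} together with Propositions~\ref{prop:newmvminus}, \ref{prop:resminusduals}, and~\ref{prop:mv6}; you have merely unpacked the bookkeeping (the splitting of Proposition~\ref{prop:newmvminus}(2) into the hypotheses of Proposition~\ref{prop:resminusduals} plus the identity $a\wedge b = a\m(a\m b)$, and the check that these furnish the standing assumptions of Proposition~\ref{prop:mv6}) that the paper leaves implicit. No gaps.
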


\begin{proof}
The result is immediate from Theorem \ref{thm:mduality} and Propositions \ref{prop:resminusduals} and \ref{prop:mv6} because the latter precisely characterize the $\m$-spaces dual to a class of $\m$-algebras that are term-equivalent to MV-algebras, by Proposition~\ref{prop:newmvminus}.
\end{proof}

Note that in particular, Corollary \ref{cor:MValgdual} establishes that the duals of MV-algebras may be captured relative to the theory of $\m$-spaces by simple first-order conditions.

\section{Some examples}\label{sec:examples}

In this final section, we present two examples illustrating our duality for $\m$-algebras. The first of these concerns a class of $\m$-algebras satisfying conditions (i)--(iv) of Corollary \ref{cor:MValgdual}, but refuting (v), which dualizes (MV6). In contrast, the second example discusses a perfect MV-algebra, and hence satisfies (i)--(v) of Corollary \ref{cor:MValgdual}. Both examples arise from the disconnected rotation construction (see \cite{J2004}, and for a duality-theoretic discussion see \cite{FU2019}), and share the same underlying lattice reduct. This lattice is constructed as follows (see Figure \ref{fig:NM}). Consider the non-positive integers $N = \{n\in\mathbb{Z} \mid n\leq 0\}$ equipped with the obvious ordering. Define
$$A=(\{1\}\times N) \cup (\{0\}\times N),$$
and order the elements of $A$ by $(j,a)\leq (k,b)$ if and only if one of the following hold:
\begin{enumerate}
\item $j<k$,
\item $j=k=1$ and $a\leq b$, or
\item $j=k=0$ and $b\leq a$.
\end{enumerate}
The set $A$ is a chain under $\leq$, and $(A,\leq)$ gives the (lattice reduct of) the \emph{disconnected rotation} of $N$. Because $(A,\leq)$ is a chain, each proper principal downset $\downset (j,a)$ of $(A,\leq)$ is a prime ideal. Apart from these, there is just one more prime ideal, usually called the \emph{co-radical} of $A$:
$$\mathcal{C}=\{(0,a)\mid a\in N\}.$$
The poset of prime ideals of the lattice $(A,\leq)$ is pictured in Figure \ref{fig:NM}. Note that here we write principal downsets of the form $\downset (0,a-1)$ as $(\upset (1,a))^\mathsf{c}$ owing to the fact that, in both examples to follow, we have $i(1,a) = (\upset (1,a))^\mathsf{c}$.
\begin{figure}
\begin{center}
\begin{tikzpicture}

\tikzset{vertex/.style = {shape=circle,draw,fill=black, inner sep = 1.5pt}}
\tikzset{edge/.style = {-,> = latex'}}
\tikzset{dote/.style = {-,dotted,> = latex'}}

\node[vertex,label=left:\tiny{${0}$}] (a) at  (0,0) {};
\node[vertex,label=left:\tiny{${-1}$}] (b) at  (0,-1) {};
\node[vertex,label=left:\tiny{${-2}$}] (c) at  (0,-2) {};

\draw[edge] (a) to (b);
\draw[edge] (b) to (c);

\draw[dote] (c) to (0, -3);

\end{tikzpicture}
\hspace{0.5 in}
\begin{tikzpicture}

\tikzset{vertex/.style = {shape=circle,draw,fill=black, inner sep = 1.5pt}}
\tikzset{edge/.style = {-,> = latex'}}
\tikzset{dote/.style = {-,dotted,> = latex'}}

\node[vertex,label=left:\tiny{${(1,0)}$}] (a) at  (0,0) {};
\node[vertex,label=left:\tiny{${(1,-1)}$}] (b) at  (0,-1) {};
\node[vertex,label=left:\tiny{${(1,-2)}$}] (c) at  (0,-2) {};

\node[vertex,label=left:\tiny{${(0,-2)}$}] (d) at  (0,-4) {};
\node[vertex,label=left:\tiny{${(0,-1)}$}] (e) at  (0,-5) {};
\node[vertex,label=left:\tiny{${(0,0)}$}] (f) at  (0,-6) {};

\draw[edge] (a) to (b);
\draw[edge] (b) to (c);
\draw[edge] (d) to (e);
\draw[edge] (e) to (f);

\draw[dote] (c) to (0, -3);
\draw[dote] (d) to (0, -3);

\end{tikzpicture}
\hspace{0.5 in}
\begin{tikzpicture}

\tikzset{vertex/.style = {shape=circle,draw,fill=black, inner sep = 1.5pt}}
\tikzset{edge/.style = {-,> = latex'}}
\tikzset{dote/.style = {-,dotted,> = latex'}}

\node[vertex,label=left:\tiny${\downset (1,-1)}$] (a) at  (0,0) {};
\node[vertex,label=left:\tiny${\downset (1,-2)}$] (b) at  (0,-1) {};
\node[vertex,label=left:\tiny${\downset (1,-3)}$] (c) at  (0,-2) {};

\draw[edge] (a) to (b);
\draw[edge] (b) to (c);
\draw[dote] (c) to (0,-3);

\node[vertex,label=left:$\tiny{\mathcal{C}}$] (o) at (0,-3.5) {};
\node[vertex,label=left:\tiny${(\upset (0,-3))^\mathsf{c}}$] (p) at  (0,-5) {};
\node[vertex,label=left:\tiny${(\upset (0,-2))^\mathsf{c}}$] (q) at  (0,-6) {};
\node[vertex,label=left:\tiny${(\upset (0,-1))^\mathsf{c}}$] (r) at  (0,-7) {};

\draw[dote] (p) to (0,-4);

\draw[edge] (p) to (q);
\draw[edge] (q) to (r);
\end{tikzpicture}
\end{center}

\caption{Hasse diagrams of $N$ (left), its disconnected rotation $(A,\leq)$ (middle), and the Priestley dual of $(A,\leq)$ (right).}
\label{fig:NM}
\end{figure}
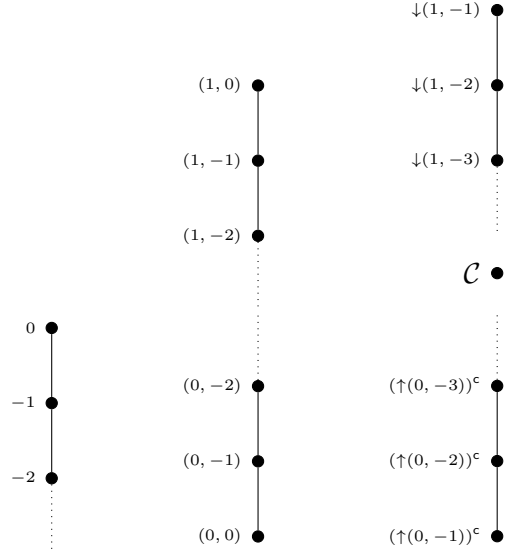

\begin{exa}[Nilpotent minimum algebras]\label{ex:NM}
A \emph{monoidal t-norm logic algebra} (or \emph{MTL-algebra}) is an algebra $(A,\wedge,\vee,\cdot,\to,0,1)$, where $(A,\wedge,\vee,0,1)$ is a bounded distributive lattice, $(A,\cdot,1)$ is a commutative monoid, $\to$ is the residual of $\cdot$, and for all $a,b\in A$,
$$(a\to b)\vee (b\to a) = 1.$$
MTL-algebras form a variety, and the latter identity axiomatizes the fact that this variety is generated by totally-ordered MTL-algebras. An MTL-algebra is \emph{involutive} if $a\mapsto a\to 0$ is an involution, and in this setting one may define a co-residuated commutative and associative operation $\oplus$ by $a\oplus b = \neg (\neg a\cdot\neg b)$. If $(A,\wedge,\vee,\cdot,\to,0,1)$ is an involutive MTL-algebra and $\m$ is the right co-residual of $\oplus$, then $(A,\wedge,\vee,\m,0,1)$ is an $\m$-algebra. A \emph{nilpotent minimum algebra} (introduced by Fodor \cite{Fod1995}) is an involutive MTL-algebra satisfying the identity
$$(a\cdot b \to 0)\vee (a\wedge b\to a\cdot b) = 1.$$ The directly indecomposable nilpotent minimum algebras lacking a negation fixed point can be obtained as disconnected rotations of G\"odel hoops \cite{U2018}.

Endowing the chain $N$ (defined above) the operation
\[ a\Rightarrow b = \begin{cases}
      0 & \text{ if }a\leq b \\
      b & \text{ if }b<a
   \end{cases}
\]
gives this lattice with the structure of a Brouwerian algebra (i.e., a Heyting algebra possibly missing a bottom element). The disconnected rotation ${\bf A}$ of the Brouwerian algebra $N$ is constructed as follows. The lattice reduct of the ${\bf A}$ is the lattice $(A,\leq)$ defined above, and we endow it with binary operations $\cdot$ and $\to$ defined by
\[ (j,a)\cdot (k,b) = \begin{cases}
      (1, a\wedge b) & \text{ if } j=k=1 \\
      (0,0) & \text{ if }j=k=0 \\
      (0,a\Rightarrow b) & \text{ if }k<j\\
      (0,b\Rightarrow a) & \text{ if }j<k
   \end{cases}
\]
\[ (j,a)\to (k,b) = \begin{cases}
      (1, a\Rightarrow b) & \text{ if } j=k=1 \\
      (0,b\Rightarrow a) & \text{ if }j=k=0 \\
      (0,a\wedge b) & \text{ if }k<j \\
      (1,0) & \text{ if }j<k.
   \end{cases}
\]
The resulting algebra ${\bf A}=(A,\wedge,\vee,\cdot,\to,(0,0),(1,0))$ is a nilpotent minimum algebra. With the defined operation $a\m b = a\cdot\neg b = a\cdot (b\to (0,0))$, we obtain a (term-equivalent) $\ominus$-algebra $(A,\wedge,\vee,\m,(0,0),(1,0))$. Direct computation yields that $\neg (0,a) = (1,a)$ and $\neg (1,a) = (0,a)$.

To describe the $\m$-space dual to ${\bf A}$, it is convenient to work with prime ideals. For prime ideals $I,J$ and points $x,y$ of the dual space of ${\bf A}$ with $I=I_x$ and $J=I_y$, we will abbreviate $I_{i(x)}$, $I_{x+y}$ and $I_{x\star y}$ by $i(I)$, $I+J$ and $I\star J$, respectively. One may show by direct computation that $i(\downset (1,a))=(\upset (0,a))^\mathsf{c}$, and that $i(\mathcal{C})=\mathcal{C}$. This suffices to characterize $i$ since this operation is an involution.

Computing with Lemma \ref{lem:plusstardef}(2) shows that:
\[ I+J=J+I = \begin{cases}
      I\vee J & \text{ if }J\subseteq i(I) \\
      \text{undefined} & \text{ otherwise. }
   \end{cases}
\]
For instance, $\downset (1,a) + I$ is defined if and only if $I\subseteq (\upset (0,a))^\mathsf{c}$. Consequently, $I=(\upset (0,b))^\mathsf{c}$ for some $b\geq a$. Then:
\begin{align*}
\downset (1,a) + I &= \{(j,p)\in A \mid \exists (k,q)\in (\upset (0,b))^\mathsf{c}[(j,p)\m (k,q)\in\downset (1,a)]\}\\
&= \{(j,p)\in A \mid \exists (k,q)\not\geq (0,b)[(j,p)\cdot\neg (k,q)\leq (1,a)]\}
\end{align*}
Note that $(k,q)\not\geq (0,b)$ if and only if $k=0$ and $q\not\leq b$. Also, $(j,p)\cdot\neg (0,q)\leq (1,a)$ trivially holds if $j=0$, so we need only consider the case when $j=1$. Then $(j,p)\cdot\neg (0,q)\leq (1,a)$ if and only if $p\wedge q\leq a$, and by residuated this holds if and only if $p\leq q\Rightarrow a$. Subject to $q\not\leq b$, we have $a\leq b < q$ and hence $q\Rightarrow a = a$. It follows that $\downset (1,a) + I = \downset (1,a) = \downset (1,a)\vee I$ for any ideal $I$ for which $+$ is defined. The other cases follow by similar computations.

The values of $\star$ may be computed directly from Lemma \ref{lem:plusstardef} as well, but we use Proposition \ref{prop:starminplus}. Phrased in terms of prime ideals, for all $(I,J)\in\dom(\star)$:
\begin{align*}
I\star J&=\inf \{I+K\mid (I,K)\in\dom(+) \text{ and }K\not\subseteq J\}\\
&=\inf \{I + K \mid  J\subset K \subseteq i(I)\}.
\end{align*}
To express the values of $\star$ compactly, for $I\neq\downset (1,-1),\mathcal{C}$ we denote by $I'$ the unique cover of $I$. Using the above characterization of $\star$, one may compute that:
\[ I\star J = \begin{cases}
      I & \text{ if } J\subseteq i(I)\subseteq I\text{ or }J\subset I\subset\mathcal{C}\\
      J' & \text{ if } I\subset\mathcal{C}\text{ and }I\subset J\\
      \text{undefined} & \text{ otherwise. }
   \end{cases}
\]
For instance, suppose that $I=(\upset (0,a))^\mathsf{c}$ for some $0\neq a\in N$ and $J=\downset (1,b)$ for some $b < a$. Then $J'$ is the least $K$ satisfying $J\subset K\subseteq i(I)$, whence $I\star J = J'$. On the other hand, we have
\begin{align*}
I\star \mathcal{C}&=\inf \{I + K \mid  \mathcal{C}\subset K \subseteq i(I)\}\\
&= \inf \{I\vee K \mid \mathcal{C}\subset K\}\\
&= \mathcal{C}.
\end{align*}
The remaining cases are handled in an analogous manner.

We note that the dual of ${\bf A}$ satisfies the conditions given in Proposition \ref{prop:resminusduals}. However, condition (v) of Corollary \ref{cor:MValgdual} fails. To see this, take, for example, $I_x = (\upset (0,-2))^\mathsf{c}$, $I_z = (\upset (0,-3))^\mathsf{c}$, $I_y=\downset (1,-4)$, and $w=i(z)$. Then $z+w = w = x\star y$ and $w\not\leq y$, but $z\not\leq x$.
\end{exa}

\begin{exa}[The Chang MV-algebra]\label{ex:Chang}
Let ${\bf A} = (A,\oplus,\neg,0)$ be an MV-algebra and let $\cdot$ be binary operation defined by $a\cdot b = \neg (\neg a\oplus\neg b)$. We say that an element $a\in A$ \emph{has finite order} if there exists a positive integer $n$ such that $a^n = 0$, and that $a$ \emph{has infinite order} otherwise. We say that ${\bf A}$ is \emph{perfect} if for each $a\in A$, $a$ has finite order if and only if $\neg a$ has infinite order (see, e.g., \cite{BDiNL1993}). The variety generated by the perfect MV-algebras coincides with the variety generated by the \emph{Chang MV-algebra} \cite{DiNL1994}, and the perfect MV-algebras are exactly those that are isomorphic to disconnected rotations of cancellative hoops \cite{NEG2005}. The Chang MV-algebra ${\bf C}$ may be defined on the same lattice reduct $A$ as in the previous example. The operations $\oplus$ and $\neg$ are uniquely determined by:
$$\neg (1,a) = (0,a),$$
$$(1,a)\oplus (1,b)=(1,0),$$
$$(0,a)\oplus (0,b)=(0,a+b),$$
$$(1,a)\oplus (0,b)=(1,\min\{a-b,0\}).$$
The elements of the form $(0,b)$ are ``infinitesimals'' in the sense that there is no $n>0$ such that the sum $(0,b)\oplus\dots\oplus (0,b)$ ($n$-times) is the top element.

Computing as before shows $i$ is the same as in the previous example. Moreover, the partial operation $+$ on the dual of ${\bf C}$ is determined by
$$\downset (0,a) + \downset (0,b) = \downset (0,b)+\downset (0,a) = \downset (0,a+b)$$
$$\mathcal{C}+\downset (0,a) = \downset (0,a) + \mathcal{C} = \mathcal{C},\text{ and }$$
$$\downset (0,a) +\downset (1,b) =\downset (1,b) + \downset (0,a) = \downset (1,a-b)\text{ for } a<b,$$
and is undefined in all other cases. The values of $\star$ may again be computed using Proposition \ref{prop:starminplus}, and this yields:
$$\downset (0,a)\star \downset (0,b) = \downset (0,b)\star\downset (0,a) = \downset (0,a+b-1),$$
$$\downset (0,a)\star\mathcal{C} = \mathcal{C}\star\downset (0,a) =\mathcal{C},$$
$$\downset (0,a)\star\downset (1,b) = \downset (1,a-b-1) \text{ if }b<a,\text{ and }$$
$$\downset (1,a)\star \downset (0,b) = \downset (1,a-b+1)\text{ if }b<a,$$
and is undefined in the remaining cases.

We note that by McNaughton's representation theorem \cite{McN1951}, the free MV-algebra $F_{MV}(1)$ on one generator may be realized as the MV-subalgebra of $[0,1]^{[0,1]}$ whose members are piecewise linear with integer coefficients. The Chang MV-algebra ${\bf C}$ is isomorphic to the quotient of $F_{MV}(1)$ by the prime MV-ideal consisting of all $f\in F_{MV}(1)$ such that $f\restriction_{[0,\epsilon)}=0$ for some $\epsilon>0$. The results in \cite{GGM2014} show that the MV-space associated to $F_{MV}(1)$ admits a decomposition over its space of prime MV-ideals, and the latter is well-known in the literature (see, e.g., \cite{Mun1986}). From this perspective, what we have computed above is the MV-space dual to one of the ``vertical'' stalks in one of the sheaf representations discussed in \cite{GGM2014}. The explicit computation of the dual space of $F_{MV}(1)$ (and, more generally, $F_{MV}(n)$) can thus be performed by analogously computing the dual spaces of quotients of $F_{MV}(1)$ at prime MV-ideals.
\end{exa}

\bibliographystyle{amsplain}

\end{document}